\documentclass[11pt]{amsart}

\usepackage[a4paper,text={14.5cm,24cm},centering,headheight=15.5pt]{geometry}

\newcommand{\includecurve}[1]{\raisebox{-20pt}{\includegraphics[scale=.85]{figures/curves/#1}}}

\usepackage{mathtools}

\usepackage[utf8]{inputenc}
\usepackage[T1]{fontenc}
\usepackage[normalem]{ulem} 

\usepackage{amsmath,amsfonts,amssymb, amsthm}
\usepackage{bm} 

\usepackage[svgnames,x11names]{xcolor}

\usepackage{booktabs} 
\usepackage{tabularx}
\newcolumntype{L}{>{$}l<{$}}
\newcolumntype{R}{>{$}r<{$}}
\newcolumntype{C}{>{$}c<{$}}

\usepackage[shortlabels]{enumitem}
\setlist{itemsep=0.5pt,topsep=2pt}
\setlist[1]{wide}
\setlist[2]{wide,labelindent=2\parindent}
\setlist[3]{wide, labelindent=3\parindent}
\setlist[enumerate]{label=\rm{(\arabic*)}}
\setlist[enumerate,2]{label=\rm({\roman*}), }
\setlist[enumerate,3]{label=(\emph{\alph*})}
\setlist[itemize]{label=\raisebox{0.25ex}{\tiny$\bullet$}}

\usepackage{tikz}
\usetikzlibrary{cd,arrows,positioning,quotes}
\tikzcdset{arrow style=tikz}
\tikzcdset{row sep/normal=10pt}
\tikzset{link/.style={column sep=1.8cm,row sep=0.16cm}}
\tikzset{map/.style={row sep=0em, column sep=0em}}
\tikzset{bullet/.style={circle,fill=black,minimum size=5pt,inner sep=0pt}}
\tikzset{fuse/.style={pos=.7,fill=white, anchor=center, inner sep=3pt}}

\newcommand{\pcoor}[1]{%
  \begingroup\lccode`~=`: \lowercase{\endgroup
  \edef~}{\mathbin{\mathchar\the\mathcode`:}\nobreak}%
  \left[
  \begingroup
  \mathcode`:=\string"8000
  #1%
  \endgroup
  \right]
} 

\newcommand{\smallmat}[1]{\left(\begin{smallmatrix} #1 \end{smallmatrix}\right)}

\newcommand{\rat}{\dashrightarrow}

\newcommand{\iso}{\stackrel{\sim}{\to}}

\newcommand{\A}{\mathbb{A}}
\newcommand{\F}{\mathbb{F}}
\renewcommand{\P}{\mathbb{P}}
\newcommand{\p}{\mathbb{P}}
\newcommand{\G}{\mathbb{G}}

\newcommand{\Z}{\mathbf{Z}}
\newcommand{\N}{\mathbf{N}}

\newcommand{\Pl}{\mathcal P}

\DeclareMathOperator{\PGL}{PGL}

\DeclareMathOperator{\GL}{GL}
\DeclareMathOperator{\SL}{SL}

\newcommand{\lb}{\mathopen{\langle}}
\newcommand{\rb}{\mathclose{\rangle}}
\newcommand{\lbb}{\mathopen{\lb\!\lb}}
\newcommand{\rbb}{\mathclose{\rb\!\rb}}
\renewcommand{\setminus}{\smallsetminus}

\renewcommand{\epsilon}{\varepsilon}
\renewcommand{\phi}{\varphi}
\renewcommand{\le}{\leqslant}
\renewcommand{\ge}{\geqslant}
\renewcommand{\leq}{\leqslant}
\renewcommand{\geq}{\geqslant}

\newcommand{\id}{\textup{id}}

\newcommand{\ls}[1]{\lvert #1 \rvert} 

\let\div\relax
\DeclareMathOperator{\div}{div}

\DeclareMathOperator{\Aut}{Aut}
\DeclareMathOperator{\Bir}{Bir}

\DeclareMathOperator{\ord}{ord}

\DeclareMathOperator{\Jonq}{Jonq}

\numberwithin{equation}{section}

\usepackage{zref-clever}
\zcsetup{cap=true,hyperref=true,nameinlink=false,reffont=\upshape}
\newcommand{\cref}[1]{\zcref{#1}}

\newtheorem{theoremA}{Theorem}

\zcRefTypeSetup{theoremA}{Name-sg = Theorem}
\zcRefTypeSetup{theoremA}{Name-pl = Theorems}

\AddToHook{env/propositionA/begin}{\zcsetup{countertype={theoremA=proposition}}}

\newtheorem{theorem}{Theorem}[section]
\newtheorem{corollary}[theorem]{Corollary}
\newtheorem{proposition}[theorem]{Proposition}
\newtheorem{propdef}[theorem]{Proposition and Definition}
\newtheorem{lemma}[theorem]{Lemma}

\theoremstyle{definition}
\newtheorem{definition}[theorem]{Definition}

\newtheorem{example}[theorem]{Example}

\newtheorem{remark}[theorem]{Remark}

\AddToHook{env/corollary/begin}{\zcsetup{countertype={theorem=corollary}}}
\AddToHook{env/proposition/begin}{\zcsetup{countertype={theorem=proposition}}}
\AddToHook{env/lemma/begin}{\zcsetup{countertype={theorem=lemma}}}
\AddToHook{env/example/begin}{\zcsetup{countertype={theorem=example}}}
\AddToHook{env/remark/begin}{\zcsetup{countertype={theorem=remark}}}
\AddToHook{env/definition/begin}{\zcsetup{countertype={theorem=definition}}}
\AddToHook{env/setup/begin}{\zcsetup{countertype={theorem=setup}}}
\zcRefTypeSetup{setup}{
Name-sg = Setup,
name-sg = setup,
Name-pl = Setups,
name-pl = setups,
}

\makeatletter
\def\@cite#1#2{\textup{[{#1\if@tempswa , #2\fi}]}}
\makeatother

\newcommand{\parent}[1]{\textup{(}#1\textup{)}} 

\newcommand{\nicecolor}{Navy}
\usepackage[
  backref=page,
  colorlinks=true,
  allcolors=\nicecolor
]{hyperref}
\usepackage[all]{hypcap} 
\usepackage[depth=2, open, openlevel=0, numbered]{bookmark} 
\usepackage{upref} 

\renewcommand{\k}{\mathbf{k}}

\title{Plane Cremona maps preserving a Euclidean volume form}
\date{\today}

\author{Marc Abboud}
\address{Marc Abboud, Universit\'e de Neuch\^atel,
Rue Emile-Argand 11,
CH-2000 Neuch\^atel,
Switzerland}
\email{marc.abboud@unine.ch}

\author{J\'er\'emy Blanc}
\address{J\'er\'emy Blanc, Universit\'e de Neuch\^atel,
Rue Emile-Argand 11,
CH-2000 Neuch\^atel,
Switzerland}
\email{jeremy.blanc@unine.ch}

\author{St\'ephane Lamy}
\address{St\'ephane Lamy, Institut de Math\'ematiques de Toulouse,
Universit\'e de Toulouse,
118 route de Narbonne,
F-31062 Toulouse Cedex 9,
France}
\email{slamy@math.univ-toulouse.fr}

\begin{document}

\begin{abstract}
We give a presentation by generators and relations of the group of birational transformations of the plane preserving a differential form with poles along a triple line.
This group strictly contains the automorphism group of the  affine plane.
\end{abstract}

\maketitle

\tableofcontents

\section{Introduction}
In birational geometry, one often studies \emph{Calabi--Yau pairs}. These are couples $(X,D)$, where $X$ is a projective variety and $D$ is an effective divisor corresponding to the poles of a canonical divisor on $X$. One then studies volume preserving birational maps between two such pairs; these are maps compatible with the associated volume forms.
We denote by $\Bir(X,D)$ the group of volume preserving birational self-maps of $X$.

If $X=\P^2$, there are, up to birational conjugacy, three possible cases for $D$: either $D$ is a smooth cubic curve, or $D$ is the union of the three coordinate lines, or $D$ is a triple line (this follows from \cite{DillerLin}, see also \cref{p:birational_classification}).

When $D$ is a smooth cubic curve, the group $\Bir(\P^2, D)$ coincides with the group of maps that restrict to automorphisms on $D$, also called the decomposition group of $D$.
In this case, by \cite[Theorem 1.4]{Pan}, the group $\Bir(\P^2, D)$ is generated by its elements of degree $2$.

When $D$ is the union of the three coordinate lines, the group $\Bir(\P^2, D)$ contains the toric Cremona group $\GL_2(\Z) \rtimes (\G_m)^2$. This is the group of automorphisms of the torus $\P^2\setminus D$, where $\GL_2(\Z)$ acts by monomial maps, and the torus $(\G_m)^2$ acts diagonally.
It was noticed in \cite{Usnich} that $\Bir(\P^2, D)$ contains nontoric elements, such as the map $P\colon (x,y) \mapsto (x,(y+1)/x)$, and then it was proved in \cite{Blanc2013} that $\Bir(\P^2, D)$ is generated by $P$ and the toric maps.
The same paper describes relations between these generators.

In the present work, we treat the case where $D = 3L$ is a triple line.
Denoting by $\A^2$ the affine plane obtained by taking the complement of $L$ inside $\P^2$, a natural subgroup of $\Bir(\P^2, 3L)$ is given by the group $\Aut(\A^2)$ of polynomial automorphisms.
Observe that in this case, we can choose some coordinates $x,y$ on $\A^2$ and write $\omega = dx \wedge dy$, and the constant $\lambda$ in the formula $f^* \omega = \lambda \omega$ is the Jacobian determinant of the polynomial automorphism $f$.
However, $\Bir(\P^2, 3L)$ contains more maps, such as the maps $(x,y)\mapsto (x+\frac{1}{y},y)$ and $(x,y) \mapsto (-y^2x + y, \frac1y)$.
By the Jung--van der Kulk theorem \cite{Jung,Kulk,CremonaBook}, the group $\Aut(\A^2)$ is the amalgamated product of the affine subgroup and the triangular subgroup along their intersection.
The triangular group being the group of polynomial automorphisms preserving the pencil of lines $y = $ constant, it is natural to consider the subgroup $B \subset \Bir(\P^2, 3L)$ of Jonquières maps preserving this same pencil.
Our main result is then:

\begin{theoremA}
\label{t:main}
The group $\Bir(\P^2, 3L)$ admits the following presentation:
\[
\Bir(\P^2, 3L) = (A *_{A \cap B} B) / N,
\]
where $A$ is the affine subgroup, $B$ is the subgroup of volume preserving Jonquières maps preserving the pencil $y = $ constant, and $N$ is the normal closure in the abstract amalgamated product $A *_{A \cap B} B$ of the single element
\[
\left( \bigl(-xy^2+y, \tfrac1y \bigr) \circ (y,x) \circ \bigl(-x+ \tfrac1y,y \bigr) \circ (y,x) \right)^2\!\!.
\]
\end{theoremA}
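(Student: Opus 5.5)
We plan to follow the standard strategy for presentations of groups of birational maps: first prove that $A$ and $B$ generate $\Bir(\P^2,3L)$, then show that every relation among generators is a consequence of the amalgamation relations together with the single relation defining $N$. We will use the Sarkisov program in its volume preserving form, i.e.\ decompose a volume preserving map into volume preserving Sarkisov links between Mori fibre spaces $(X,D)$ birational to $(\P^2,3L)$. The presentation then comes from an analysis of elementary relations among these links, via a $2$-dimensional complex on which the group acts (the Sarkisov complex of the pair).

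\emph{Generation.} First we list the volume preserving Mori fibre spaces that occur. The log canonical place of $(\P^2,3L)$ is the valuation centred at a point of $L$, and the only ones that matter are those where the pair stays lc with $-K_X=D$ effective. This should leave $\P^2$ with the triple line, and Hirzebruch surfaces $\F_n$ with boundary $D$ made of fibres and sections in a few shapes. Next we list the volume preserving links. Blowing up a point $p\in L$ gives a link $\P^2\rat\F_1$. The links $\F_n\rat\F_{n\pm1}$ are elementary transformations centred at points of the boundary lying in the right position, and there are fibrewise automorphisms. Conjugating back to $\P^2$, each link word that starts and ends at $\P^2$ with the pencil $y=$ const preserved lies in $B$. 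The only other base points on $L$ yield affine changes, which lie in $A$. Hence $\Bir(\P^2,3L)=\lb A,B\rb$.

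\emph{Relations.} A general theorem (Kaloghiros, Lamy--Zimmermann) says that relations among Sarkisov links are generated by elementary relations. These come from rank $3$ fibrations, i.e.\ surfaces of Picard rank $3$ dominating the pairs. We will classify the volume preserving rank $3$ configurations. The relations coming from a single pencil, where all links are fibrewise, are absorbed in $B$. Those coming from automorphisms of $\P^2$ are absorbed in $A$. Those involving both, but with the $B$-part preserving $L$ as a line of the pencil, are the Jung--van der Kulk amalgam relations, absorbed in $A*_{A\cap B}B$. The remaining relations come from rank $3$ fibrations where two different pencils through points of $L$ meet non-polynomially. We expect exactly one new type, corresponding to blowing up $\P^2$ along a specific configuration of infinitely near points on $L$ where two conic-bundle structures appear. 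Its conjugacy class is represented by the displayed word squared. We will verify directly that this word is the identity in $\Bir(\P^2)$, by computing the composition or checking that the cube-like loop of links closes. We will also check that every other exceptional elementary relation is conjugate to it or follows from it together with the amalgam relations.

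The main obstacle is this classification of rank $3$ volume preserving fibrations and the reduction of all the exceptional ones to a single relation. A triple line makes the boundary very non-reduced, so the log discrepancy computations along towers of infinitely near points on $L$ must be done carefully. The plan is to parametrise base points by their position relative to the strict transform of $L$ and the exceptional curves with coefficient $1$ in $D$, bound their number using $\rho\le 3$, and then enumerate the cases.
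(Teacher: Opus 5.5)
Your proposal has gaps at both stages, and the stage you defer (``we expect exactly one new type'') is the whole theorem.

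\textbf{The framework.} $(\P^2,3L)$ is not log canonical: the boundary has coefficient $3$, and since log discrepancies are invariant under volume preserving maps, no pair in its class is lc. So ``lc places'' make no sense here, and Corti--Kaloghiros does not apply, as the paper points out in the introduction. Likewise, the theorems of Kaloghiros and Lamy--Zimmermann concern all Sarkisov links. They do not by themselves say that relations among volume preserving links are generated by volume preserving elementary relations. In dimension $2$ the factorisation into volume preserving links can be done by hand, since each centre is a base point and hence on a pole by \cref{p:base_points_on_poles}. The corresponding statement about relations, however, you would have to prove.

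\textbf{The list of models is wrong.} By \cref{p:birational_classification}, the class of $(\P^2,3L)$ contains $\P^2$ with four other boundaries: double line plus line, three concurrent lines, conic plus tangent line, and cuspidal cubic. It also contains conic bundles whose boundary meets the generic fibre in two points instead of a rational double point. Examples: blow up a smooth point of the cuspidal cubic, so the boundary becomes a bisection; or exchange the rulings of $\F_0$ carrying the standard form $2E+F_1+F_2$, which occurs in a link decomposition of a map with a tail. These fibrations are not equivalent to the one preserved by $B$, because the polar divisor of the induced $1$-form on the generic fibre differs. Link decompositions of elements of $\Bir(\P^2,3L)$ do pass through them and through the other copies of $\P^2$. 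So neither your generation step (``each link word \dots\ lies in $B$'') nor the claim that fibrewise relations are ``absorbed in $B$'' holds as stated. Removing these intermediate models is exactly the nontrivial content of the paper's generation proof (pointed quadratic maps, \cref{l:stays_same}, \cref{p:master}, induction on the order on types).

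\textbf{The relation step cannot work as described.} An elementary relation is supported on one rank $3$ fibration $Y/B$, so $\rho(Y)\le 4$, and every map between two models occurring in it is resolved on $Y$. But the displayed word contains $\chi_1=(-xy^2+y,\frac1y)$, which has $7$ base points. So the relation of \cref{t:main} is not an elementary relation. At best it is what some elementary relations living on the other models become after being rewritten as words in $A\cup B$. That rewriting of loops of links, modulo the amalgam relations, is precisely the missing step.

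\textbf{What the paper does instead.} It avoids Sarkisov theory here. It takes $\Jonq(3L)$ as generating set and runs a peak reduction on the degree of homaloidal systems (\cref{t:lowering_rel}, \cref{t:normal_form}). Peaks reduce, by \cref{prop:induction-peak-configuration}, to two model situations.
\begin{enumerate}
\item A degree $4$ map with a tail against the quadratic map at its tail (\cref{l:peak_tail}), which gives the length $4$ relation.
\item Two simple maps with distinct proper base points (\cref{l:simple_peak}), which give relations of length $6$, $8$, $12$; \cref{l:simple_relation} derives these from the length $4$ relation via \cref{l:relation_toolboxNew}.
\end{enumerate}
Then \cref{l:jonq_relation} (every length $3$ relation in $\Jonq(3L)$ is conjugate into $B$ by affine maps) and \cref{l:amalgam_free} convert $\lb \Jonq(3L)\mid R\rb$ into $(A*_{A\cap B}B)/N$. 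Nothing in your plan substitutes for these steps. The fact that there are already two kinds of peaks shows that ``exactly one new type'' needs an actual argument.
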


Restricting to the Calabi--Yau pairs $(X,D)$ where $D$ is a reduced divisor, $X$ is a terminal variety, and $(X, D)$ has at most dlt singularities, Corti and Kaloghiros \cite{CortiKaloghiros} proved a version of the Sarkisov program for factorising volume preserving maps between Calabi--Yau pairs with a structure of Mori fibre space.
This framework was then used in \cite{AraujoCortiMassarenti} to study the volume preserving map of $(\P^3, Q)$, where $Q$ is a smooth quartic surface in $\P^3$.
Similarly, the result of \cite{Blanc2013} mentioned above could be seen as a consequence of the general Sarkisov factorisation applied to the case of the dlt pair $(\P^2, L_1 + L_2 + L_3)$.
By contrast, the case of the pair $(\P^2, 3L)$ does not fit in this framework, because the boundary divisor is not reduced.

We now describe the way we prove Theorem~\ref{t:main} and how the article is organised.

Section~\ref{s:generalities} contains some classical results and preliminaries on birational maps of the plane, linear systems, base points and volume preserving maps.
In \S\ref{SubSec:Links} we also introduce some basic results on Jonqui\`eres maps associated to volume forms.

Section~\ref{sec:generation} proves the generation part of Theorem~\ref{t:main}.
To prove that $\Bir(\P^2, 3L)$ is generated by $A$ and $B$, actually we prove that $\Bir(\P^2, 3L)$ is generated by the set $\Jonq(3L)$ of  Jonquières maps preserving the form $\omega$ with triple pole along $L$.
The sets $A\cup B$ and $\Jonq(3L)$ generate the same subgroup of $\Bir(\P^2)$, as $\Jonq(3L)=\{ aba'\mid a,a'\in A,b\in B\}$.
Translating the relation of \cref{t:main} in this setting (which becomes shorter), we prove the following reformulation of \cref{t:main}:

\begin{theoremA}
\label{t:main_bis}
The group $\Bir(\P^2, 3L)$ admits the presentation
\[
\Bir(\P^2, 3L) = \lb \Jonq(3L) \mid R \rb,
\]
where $R$ consists of all relations of length $\le 3$, and the following single extra relation of length $4$:
\[
\left( \bigl(-xy^2+y, \tfrac1y \bigr) \circ \bigl(x, -y+ \tfrac1x\bigr) \right)^2.
\]
\end{theoremA}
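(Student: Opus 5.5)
We would follow the strategy of \cite{Blanc2013} (and Iskovskikh's presentation of the Cremona group): first prove generation, then prove that every relation among elements of $\Jonq(3L)$ follows from $R$. For generation, take $f\in\Bir(\P^2,3L)$ of degree $d>1$ with a minimal resolution of its base points. Using the volume form, the base points of $f$ are all on $L$ or infinitely near to it, and they are constrained: the form $\omega$ has a triple pole along $L$, so the exceptional divisors over which $f^{-1}$ contracts curves must have the right discrepancy with respect to $\omega$ (order of vanishing $-1$ of the pulled-back form, i.e.\ non-canonical valuations in the sense of Calabi--Yau pairs). We then choose a base point $p$ of maximal multiplicity $m$ and use Noether-type inequalities (as in the preliminaries on linear systems) to find a Jonquières map $j\in\Jonq(3L)$ with base point $p$ and a decreasing invariant, namely $\deg(f\circ j^{-1})<\deg f$, or equal degree with smaller multiplicity data. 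Induction on $(\deg f, \text{number of base points of maximal multiplicity})$ gives $f\in\lb\Jonq(3L)\rb$.

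For the relations we use the standard simply connected complex argument. Consider a word $j_n\circ\dots\circ j_1=\id$ with $j_i\in\Jonq(3L)$. Build the sequence of linear systems $\Lambda_i=(j_i\circ\dots\circ j_1)^{-1}_*\Lambda_0$ where $\Lambda_0$ is the system of lines, and take the index $i$ where $\deg\Lambda_i$ (with multiplicity data lexicographically) is maximal. At such a peak, $j_{i+1}\circ j_i^{-1}$ passes through a high-degree system; the aim is to replace $j_{i+1}j_i$ by a product of Jonquières maps through lower systems, using only relations of length $\le 3$ (triangles $j_3j_2j_1=\id$, which occur when two Jonquières maps share a base point of the right multiplicity, so their composite is again in $\Jonq(3L)$, or when there is a common refinement through a point $q$ of large multiplicity in $\Lambda_i$). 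The key local lemma: if $p,p'$ are the maximal base points for $j_i^{-1}$ and $j_{i+1}$ on $\Lambda_i$, either $p=p'$ (then $j_{i+1}j_i^{-1}\in\Jonq(3L)$, a triangle) or one can find a third map $j'$ with base point of multiplicity large enough so that $j'\circ j_i^{-1}$ and $j_{i+1}\circ j'^{-1}$ each lower the peak and are each expressible via triangles.

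The main obstacle is the exceptional configuration where the two peak base points are infinitely near points on $L$ such that no third Jonquières map lowers the degree: this is precisely where the extra length-$4$ relation $\bigl((-xy^2+y,\tfrac1y)\circ(x,-y+\tfrac1x)\bigr)^2$ appears, analogous to the relation $P^5=\id$-type in \cite{Blanc2013}. I would handle it by classifying, via the constraint from $\omega$ that all base points lie in a specific tower of infinitely near points over a point of $L$, the degree-$2$ (quadratic) volume preserving peaks, checking that the only minimal nontrivial loop not decomposed into triangles is the given square; then verify directly that this element is the identity and that every other peak of degree $2$ reduces. Finally, one checks equivalence with Theorem~\ref{t:main} by rewriting $\Jonq(3L)=ABA$ and the length-$\le 3$ relations as amalgam relations.
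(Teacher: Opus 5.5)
\emph{Generation.} You claim that Noether-type inequalities give some $j\in\Jonq(3L)$ with $\deg(f\circ j^{-1})<\deg f$. They do not. The Noether--Castelnuovo step (\cref{p:simplifying_jonquieres}) produces a Jonquières map whose base points lie on the poles, so it is volume preserving by \cref{p:base_points_on_poles}. But its target is a pair $(\P^2,D')$ where $D'$ can be any of the five curves of \cref{p:birational_classification}, not necessarily a triple line. For example, $f=\zeta_{-1}\chi_1=\bigl(y(1-xy),\frac{x}{xy-1}\bigr)$ has degree $5$, multiplicity $3$ at $p=[1:0:0]$, multiplicity $2$ at the tail $t\leftarrow t_1\leftarrow t_2$, and multiplicity $1$ at $p_1,p_2,p_3$. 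Its major base points are $p,t,t_1,t_2$, and the recipe gives the quadratic map with base points $p,t,t_1$. That map sends $3L$ onto a double line plus a line, so it is not in $\Jonq(3L)$. Here $\chi_1$ happens to work instead, but your sketch gives no reason why a degree-lowering element of $\Jonq(3L)$ should always exist. The base-point configurations of such elements are very rigid (\cref{Prop:BasePtsJonq}); for instance, a quadratic one has a single proper base point. The paper gets around this by factorising through volume preserving Jonquières maps between pairs of all five types. It then removes the intermediate types $2$--$5$ using pointed quadratic maps (\cref{l:stays_same,p:master}).

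\emph{Relations.} Your key local lemma fails in this group: for distinct maximal points, you cannot in general insert a third map $j'$ that splits the peak into pieces expressible by triangles. The exceptional cases are also not the ones you describe. The natural candidate, a quadratic map having both maximal points as base points, does not exist in $\Jonq(3L)$, and sometimes there is no third lowering map at all. Take the hexagon $\phi_6\cdots\phi_1=\id$ of simple quadratic maps, with degrees $1,2,4,5,4,2,1$ (\cref{l:degree_sequence,l:simple_relation}), and let $\Lambda$ be its top vertex, of type $(5;2^6)$. Then \cref{PropDelta} and \cref{Prop:BasePtsJonq} show that, up to $A$, the only elements of $\Jonq(3L)$ lowering $\deg\Lambda$ are the two simple quadratic maps based at the two proper base points $p\neq q$ of $\Lambda$. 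So this peak can only be lowered along the whole hexagon. Peaks between simple maps of degrees $(2,3)$ and $(2,4)$ are handled in the same way by loops of length $8$ and $12$ (\cref{l:simple_peak}).

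These loops are not the square, and they do not follow from relations of length $\le 3$. Indeed, writing $(x,y+\frac{\alpha}{x})=\tau(x+\frac{\alpha}{y},y)\tau$ with $\tau=(y,x)$, the hexagon becomes a nontrivial reduced word in $A*_{A\cap B}B$. So your claim that the square is the only loop not decomposed into triangles is wrong. What must be shown is that these loops lie in the normal closure of the square, which is the content of \cref{l:relation_toolboxNew,l:simple_relation}. There, one inserts $\chi_{\pm\alpha_i}$ using $\chi_\alpha\zeta_\alpha\chi_{-\alpha}=\zeta_\alpha$; this turns the even factors into affine maps and reduces everything to maps preserving a single pencil.

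Two further points. The square itself comes from a peak between a quartic with a tail $t\leftarrow t_1\leftarrow t_2$ and the quadratic map based at that tail. Their maximal points are two distinct proper points of $L$, so it is not a quadratic peak at infinitely near points. Finally, your plan omits the reduction of an arbitrary peak, including non-strict ones, to these two configurations. In the paper this is \cref{Cor:SimplificationJonqDec,l:no_common_points,prop:induction-peak-configuration}, together with the induction on $(\deg\Lambda,u)$ in \cref{t:lowering_rel}.
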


Section~\ref{s:volume preserving-jonquieres-maps} studies the elements of $\Jonq(3L)$. A description in coordinates is given in~\cref{l:jonq_preserving}, and allows to prove that the abelianisation of $\Jonq(3L)$ is $\k^*$ (\cref{prop:abelianisation-trivial}).
Then, \cref{Prop:BasePtsJonq}  gives a necessarily and sufficient conditions for a set of points to be base points of an element of $\Jonq(3L)$.
We also introduce simple maps in \S\ref{Sec:SimpleMaps} and describe relations associated to them, showing that all of them hold in the group $ \lb \Jonq(3L) \mid R \rb$ of \cref{t:main_bis}.

Section~\ref{Sec:ProofBis} introduces peak configurations and degree lowering relations, and gives the proof of \cref{t:main_bis}.

Section~\ref{sec:reformulation} explains how  \cref{t:main} follows from \cref{t:main_bis}.

\subsection*{Acknowledgments}
This article was written while S.~Lamy was on a ``d\'el\'egation CNRS'' for a one-year stay at the University of Neuch\^atel, supported by the Labex CIMI.
The three authors acknowledge support by the Swiss National Science Foundation Grant ``Birational transformations of higher dimensional varieties'' $200020\_214999$.
\subsection*{AI Statement}
  The authors did not use any AI in any sense for the article.

\section{Generalities}\label{s:generalities}

We work over an algebraically closed field $\k$ of arbitrary characteristic.

\subsection{Homaloidal linear systems}

Given $f \in \Bir(\P^2)$, the linear system $\Lambda$ given by inverse images of lines is called the \emph{homaloidal linear system} associated with $f$.
Writing $f = \pcoor{P_0:P_1:P_2}$ with $P_i \in \k[x,y,z]$ homogeneous of the same degree $d$ and without a nonconstant common factor, $\Lambda$ is the system of curves with equation $a_0 P_0 + a_1 P_1 + a_2 P_2 = 0$.
Consider a minimal resolution of $f$ by blowups:
\[
\begin{tikzcd}
& Z \ar[ld,"\eta",swap] \ar[rd,"\eta'"]\\
\P^2 \ar[rr,"f",dashed] && \P^2.
\end{tikzcd}
\]
Here $\eta = \eta_1 \circ \dots \circ \eta_r$ is a sequence of blowups $\eta_i\colon X_{i+1} \to X_{i}$, with $X_{r+1} = Z$ and $X_1 = \P^2$.
Each point $p_i\in X_i$ blown up by $\eta_i$ is called a \emph{base point} of $f$.
We denote by $m_i$ the multiplicity at $p_i$ of a general member of the strict transform of $\Lambda$.
We say that $(d;m_0, \dots, m_r)$ is the homaloidal type of $f$.

Given two indices $i<j$, we say that
\begin{itemize}
\item
 $p_j$ is \emph{infinitely near to $p_i$} if $(\eta_{i}\circ \dots \circ \eta_j)(p_j)=p_i$.
 \item
$p_j$ is \emph{in the first neighbourhood of $p_i$}, denoted by $p_i \leftarrow p_j$, if $p_j$ is infinitely near to $p_i$ and not to any $p_{i'}$ with $i<i'<j$. This means that $\eta_{i+1}\circ \dots \circ \eta_j$ is a local isomorphism at $p_j$ and sends $p_j$ onto a point of the exceptional curve $\eta_i^{-1}(p_i)$ of $p_i$.
 \item
$p_j$ is \emph{proximate} to $p_i$ if $p_j$ lies on the strict transform of the exceptional curve $\eta_i^{-1}(p_i)$.
 \end{itemize}
Note that $p_j$ is infinitely near to $p_i$ if and only if there is a chain of points $p_i \leftarrow \dots \leftarrow p_j$.
Moreover, if $p_j$ is proximate to $p_i$, then all points in this chain except $p_i$ are proximate to $p_i$.

As the strict transform of exceptional curves always is a tree, each base point $p$ is proximate to $0$, $1$, or $2$ other base points.
We say that $p$ is a \emph{proper} base point if $p$ is not proximate to any other base point, and thus corresponds to a point of $\P^2$.

We also say that an irreducible curve of $\P^2$ passes through a point $p_i$ if its strict transform passes through $p_i$.

\begin{lemma}[Noether's relations]
\label{l:noether_rel}
\cite[2.5.1]{Carraminana}
\cite[2.34]{CremonaBook}
Let $(d;m_1, \dots, m_r)$ be a homaloidal type.
Then
\begin{align*}
\sum_{i=1}^r m_i = 3d - 3, &&
\sum_{i=1}^r m_i^2 = d^2 - 1.
\end{align*}
\end{lemma}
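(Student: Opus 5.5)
The plan is to compute intersection numbers on the minimal resolution $Z$ of $f$. We use the notation $\eta\colon Z\to\P^2$, $\eta'\colon Z\to\P^2$ from above. Let $H=\eta^*(\text{line})$ and let $E_1,\dots,E_r$ be the total transforms in $Z$ of the exceptional curves of the blowups $\eta_i$. Then $\operatorname{Pic}(Z)$ is free with basis $H,E_1,\dots,E_r$. The intersection form is diagonal in this basis, with $H^2=1$, $E_i^2=-1$ and $H\cdot E_i=E_i\cdot E_j=0$ for $i\neq j$. The canonical class is $K_Z=-3H+\sum_i E_i$. Both facts follow by induction on the number of blowups from the blowup formula $K_{X_{i+1}}=\eta_i^*K_{X_i}+E_i$.

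Next, express the strict transform of a general member of $\Lambda$. Since $m_i$ is the multiplicity at $p_i$ of the strict transform, the standard identity gives its class as $L=dH-\sum_i m_iE_i$. By construction of the homaloidal system, $L=\eta'^*(\text{line}')$, the pull-back of a general line by the birational morphism $\eta'$. Using $E_i$, the total transforms, rather than strict transforms, is what makes the multiplicities of infinitely near points enter linearly.

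The two invariants then come from $\eta'$. First, $L^2=1$, since the pull-back of a birational morphism preserves intersection numbers and two general lines meet once. Expanding gives $d^2-\sum m_i^2=1$. Second, apply adjunction on $Z$ to the strict transform of a general line, which is a smooth rational curve. This gives $L^2+K_Z\cdot L=-2$, so $K_Z\cdot L=-3$. We can also see this as $K_Z=\eta'^*K_{\P^2}+(\text{exceptional})$, where the exceptional part is orthogonal to $\eta'^*$ of a line. Expanding, $-3d+\sum m_i=-3$, which is the first relation.

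The only point requiring care is justifying the class of the strict transform, $L=dH-\sum m_iE_i$. This includes the fact that the general member is irreducible and smooth on $Z$, being the pull-back of a general line, so that adjunction applies. We must also check that the minimality of the resolution does not matter, since extra blowups would simply contribute $m_i=0$. These points are standard; otherwise the argument is a direct computation.
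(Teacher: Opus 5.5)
Your proof is correct and is essentially the standard argument: the paper gives no proof of its own, only citations, and the cited proofs are this same intersection computation on the resolution, using $L^2=(\eta'^*\ell)^2=1$ and the rationality of the general homaloidal curve (adjunction, or equivalently the genus formula $\binom{d-1}{2}=\sum_i\binom{m_i}{2}$) to get $K_Z\cdot L=-3$. The points you flag are exactly the ones that need checking, and they hold in any characteristic: for a general line $\ell$ avoiding the points blown up by $\eta'$, the curve $\eta'^{-1}(\ell)$ is irreducible and is the strict transform of a general member of $\Lambda$.
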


\begin{lemma}
\label{TwoptsHom}
Let $(d;m_1, \dots, m_r)$ be a homaloidal type, with $d\ge 1$. Then, for all $1\le i < j\le m_r$ we have \[d\ge m_i+m_j.\]
\end{lemma}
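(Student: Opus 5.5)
The plan is the classical argument: intersect a general member of the homaloidal system with a suitable curve through the two base points. (The bound ``$j\le m_r$'' in the statement should clearly read $j\le r$, i.e.\ $p_i,p_j$ are any two distinct base points.) If $d=1$ there are no base points, so we assume $d\ge 2$ and hence $r\ge 1$.

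\emph{Step 1 (choice of the curve).} We want an irreducible curve $C$ whose strict transform on $Z$ passes through $p_i$ and $p_j$, with small degree and controlled multiplicities. Concretely, if $p_j$ is not infinitely near $p_i$, we take $C$ to be the line through the images of $p_i$ and $p_j$ in $\P^2$. If the images coincide, we take the line through the proper point with the tangent direction determined by the first-neighbourhood point.

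\emph{Step 2 (the intersection bound).} Write $\ell$ for the pull-back of a line, $E_k$ for the total transforms of the exceptional curves, and $\tilde C$ for the strict transform of $C$. Then
\[
\tilde C = \deg(C)\,\ell-\sum_k \mu_k E_k ,\qquad \mu_i,\mu_j\ge 1 .
\]
Let $\tilde\Lambda$ be a general member of the strict transform of $\Lambda$, so $\tilde\Lambda = d\ell-\sum_k m_kE_k$. If $\tilde C$ is not a component of $\tilde\Lambda$, then
\[
0\le \tilde\Lambda\cdot\tilde C = d\deg(C)-\sum_k m_k\mu_k \le d\deg(C)-m_i-m_j .
\]
Since $\Lambda$ has no fixed component, $C$ is indeed not a component of a general member. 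For a line this gives $d\ge m_i+m_j$.

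\emph{Step 3 (reduction).} To avoid the geometric case analysis, one reduces to the case where both points are proper, or where $p_j$ lies in the first neighbourhood of the proper point $p_i$. This uses the proximity inequalities, which follow from $\tilde\Lambda\cdot\tilde E_k\ge 0$:
\[
m_k\ge\sum_{p_l \text{ proximate to } p_k} m_l .
\]
These imply that multiplicities weakly decrease along chains of infinitely near points, so $m_j\le m_{j'}$ whenever $p_j$ is infinitely near $p_{j'}$. Replacing each of $p_i,p_j$ by a suitable predecessor therefore only increases $m_i+m_j$, and we arrive at one of the following.
\begin{itemize}
\item Two proper points: the line through them works.
\item $p_j$ in the first neighbourhood of the proper point $p_i$: the line through $p_i$ in the direction $p_j$ works.
\item $p_i$ and $p_j$ in the same tower over a proper point $q$: after moving $p_j$ to $q$, this becomes the previous case.
\end{itemize}

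The main obstacle is this bookkeeping in Step~3: choosing the right predecessors so that the resulting pair is distinct and lies on a line. In particular, when both points lie in the tower over the same proper point, one must replace the pair by $q$ and its first-neighbourhood point in the direction of the tower, and check via the proximity inequalities that the sum of multiplicities does not drop.
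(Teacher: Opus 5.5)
Your proof is correct and is essentially the paper's argument: you use $m_k\ge m_{k'}$ for $p_k\leftarrow p_{k'}$ (which you justify via the proximity inequalities) to reduce to a proper $p_i$ with $p_j$ either proper or in its first neighbourhood, and then intersect a general member of $\Lambda$ with the line through the two points. You are also right to read $j\le m_r$ as $j\le r$, and your Step~1 choice of line is only valid once the reduction of Step~3 has been carried out, which is how you in fact use it.
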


\begin{proof}
Let $\Lambda$ be a linear system having degree $d$ and multiplicities $m_1,\ldots,m_r$ at $p_1,\ldots,p_r$.
We use the following basic fact: if $p_{k} \leftarrow p_{k'}$, then the corresponding multiplicities satisfy $m_k \ge m_{k'}$.
We can then assume that the point $p_i$ is proper, and that either the point $p_j$ is proper and distinct from $p_i$, or $p_i \leftarrow p_j$.
In both cases, there is a line through the two points, and Bézout's theorem and the irreduciblity of a general element of $\Lambda$ gives $d\ge m_i+m_j$.
\end{proof}

A map $\phi \in \Bir(\P^2)$ of degree $d$ is called \emph{Jonquières} if either $d = 1$, or $d \ge 2$ and $\phi$ admits a base point $p_0$ of multiplicity $d-1$.
This implies that $\phi$ admits exactly $2d -2$ other base points, all of multiplicity one.
If $d \ge 3$, the point $p_0$ is proper and is called the maximal base point of $\phi$.
When $d = 2$, by convention any proper base point of $\phi$ will be called a maximal base point, there are up to three choices.
In all cases, $\phi$ sends the pencil of lines through a maximal base point $p_0$ to the pencil of lines through some point $q_0$, and $\phi^{-1}$ is a Jonquières map with maximal base point $q_0$.

Let $(d;m_0, \dots, m_r)$ be the homaloidal type of a map $f \in \Bir(\P^2)$, and assume the base points $p_i$ are ordered such that $m_0 \ge m_1 \ge \dots \ge m_r$.
We say that $p_i$ is a \emph{major base point} if $2m_i > d - m_0$.

\begin{proposition}
\label{p:simplifying_jonquieres}
\cite[8.3.3]{Carraminana}, \cite[12.24]{CremonaBook}.
Let $f \in \Bir(\P^2)$, with major base points $p_0, \dots, p_h$, and corresponding multiplicities $m_0 \ge \dots \ge m_h$.
Let $k = (h+2)/2$ if $h$ is even
\parent{resp. $k = (h+1)/2$ if $h$ is odd}.
Then $k \ge 2$, and there exists a Jonquières map $\phi$ of degree $k$ with maximal base points at $p_0$, and other base points $p_1, \dots, p_{2k-2}$.
Moreover, $\deg(f \circ \phi^{-1}) < \deg(f)$.
\end{proposition}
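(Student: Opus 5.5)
We follow the classical argument of Noether--Castelnuovo type as in \cite[8.3.3]{Carraminana}. Let $f$ have degree $d\ge 2$ and homaloidal type $(d;m_0,\dots,m_r)$, ordered with $m_0\ge m_1\ge\dots$. We argue in four steps: count the major points to get $k\ge 2$, show the chosen points can be base points of a Jonquières map, compute the degree of $f\circ\phi^{-1}$ via intersection numbers, and show this degree is $<d$.

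\emph{Step 1: $h\ge 2$, hence $k\ge 2$.} Multiply the first Noether relation of \cref{l:noether_rel} by $m_0$ and subtract the second:
\[\sum_{i\ge 1} m_i(m_0-m_i) = (3d-3)m_0 - m_0^2 - (d^2-1) + m_0^2 - \dots,\]
which yields the classical inequality $m_0+m_1+m_2>d$. Since $m_1,m_2\le m_0$, this forces $2m_1> d-m_0$ and $2m_2> d-m_0$, so $p_1,p_2$ are major. Thus $h\ge 2$, and $k\ge 2$ in both parity cases.

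\emph{Step 2: existence of $\phi$.} We need a Jonquières map of degree $k$ whose maximal base point is $p_0$ (multiplicity $k-1$) and whose simple base points are $p_1,\dots,p_{2k-2}$. Note $2k-2\le h$, so these are all major points. The condition for such a map to exist is that the configuration $\{p_0,\dots,p_{2k-2}\}$ is closed under taking predecessors and lies in sufficiently general position. Concretely:
\begin{itemize}
\item no line through $p_0$ contains another of the points;
\item no conic through $p_0$ contains $k$ of them; and so on.
\end{itemize}
Each configuration violating these conditions is excluded by Bézout. If a line through $p_0$ contained $p_i$, then $d\ge m_0+m_i$ from \cref{TwoptsHom} is not yet a contradiction. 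However, the irreducible curves of the pencil through $p_0$ meet a general member of $\Lambda$ in $d-m_0$ further points, and two major points on one such line would give $m_i+m_j>d-m_0$, a contradiction. Similarly, a curve of degree $e$ with a point of multiplicity $e-1$ at $p_0$ through $2e$ of the major points contradicts Bézout, since $e d\ge (e-1)m_0+\sum m_i$ fails when $2m_i>d-m_0$. Closure under predecessors follows since a predecessor has at least the multiplicity of its successor and is therefore also major. We expect this step to be the main obstacle: handling infinitely near points and the precise characterization of admissible base loci of Jonquières maps needs care. We would rely on the standard statement of \cite[12.24]{CremonaBook}, which describes the Jonquières systems $\Lambda_k$ of degree $k$ with these multiplicities as having no fixed component and being homaloidal.

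\emph{Step 3: degree count.} The degree of $f\circ\phi^{-1}$ equals the intersection of $\Lambda$ with the homaloidal system of $\phi$, minus contributions at common base points:
\[\deg(f\circ\phi^{-1}) = kd-(k-1)m_0-\sum_{i=1}^{2k-2} m_i.\]

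\emph{Step 4: the degree drops.} It remains to show
\[\sum_{i=1}^{2k-2} m_i > (k-1)(d-m_0).\]
This holds because each $m_i$ with $i\le 2k-2\le h$ satisfies $2m_i>d-m_0$. Summing over the $2k-2$ indices and pairing them up gives exactly the desired strict inequality.
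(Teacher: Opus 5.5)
Your Steps 3 and 4 are correct. The degree formula is exactly \cref{PropDelta}, and summing $2m_i>d-m_0$ over $i=1,\dots,2k-2$ gives the strict drop. The paper gives no proof of its own here: the statement is quoted from \cite[8.3.3]{Carraminana} and \cite[12.24]{CremonaBook}. So the whole content lies in $k\ge 2$ and in the existence of $\phi$, and these are exactly the two places where your argument fails.

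\textbf{Step 1.} Multiplying the first Noether relation by $m_0$ and subtracting the second gives $\sum_{i\ge1}m_i(m_0-m_i)=(d-1)(3m_0-d-1)\ge 0$, that is $3m_0>d$. It does not give $m_0+m_1+m_2>d$. More seriously, even granting $m_0+m_1+m_2>d$, you only get $m_1+m_2>d-m_0$, hence $2m_1>d-m_0$. Nothing forces $2m_2>d-m_0$: the values $d=10$, $m_0=6$, $m_1=4$, $m_2=1$ satisfy every inequality you invoke. So $h\ge 2$ is not proved. A correct route: with $e=d-m_0$, Noether's equalities give
\[
\sum_{i\ge1}m_i(2m_i-e)=e(2m_0-e+3)-2>0 .
\]
Non-major points contribute $\le 0$, and $m_1\le \mu:=\min(m_0,e)$ by \cref{TwoptsHom}. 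If $h\le 1$, the left side is at most $\max\{0,\mu(2\mu-e)\}$. Both cases $\mu=e\le m_0$ and $\mu=m_0<e$ then contradict $2m_0>e$, which is your inequality $3m_0>d$.

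\textbf{Step 2.} The existence of $\phi$ is the heart of the proposition, and it is not established. Your list of conditions is not the right one.
\begin{itemize}
\item The first bullet can never hold (consider the line $p_0p_1$).
\item A curve of degree $j$ with multiplicity $j-1$ at $p_0$ through $2j$ of the points is no obstruction at all. For $j=k-1$ such a curve through all $2k-2$ points always exists: it is the curve contracted by $\phi$.
\item Consistently, your B\'ezout estimate gives nothing: $(j-1)m_0+\sum_{2j}m_i>(j-1)m_0+j(d-m_0)=jd-m_0$ is compatible with $\le jd$.
\end{itemize}
The genuine obstructions include the curves on the blowup meeting $kH-(k-1)E_0-\sum_{i=1}^{2k-2}E_i$ negatively:
\begin{itemize}
\item lines through $p_0$ and two chosen points, which your argument does handle;
\item a $p_i$ with two chosen points proximate to it, which is easy since $m_i\ge m_j+m_l>d-m_0\ge m_i$;
\item $k$ or more chosen points proximate to $p_0$, coming from $\tilde E_0$;
\item curves of degree $j$ with multiplicity $j-1$ at $p_0$ through $k+j$ chosen points.
\end{itemize}
The last two are not excluded by local inequalities combined with the pointwise bound $2m_i>d-m_0$. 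The proximity inequality only yields $m_0\ge\sum m_i>s(d-m_0)/2$ for $s$ chosen points proximate to $p_0$, which is no contradiction when $m_0$ is large compared with $d-m_0$. B\'ezout against the other curves gives a contradiction only when $(k-j)(d-m_0)\ge 2m_0$. A global input such as Noether's equalities is needed here. Deferring this step to \cite[12.24]{CremonaBook} amounts to citing the very statement you are trying to prove.
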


\begin{proposition}
\label{PropDelta}
Let $\phi\in \Bir(\P^2)$ be a Jonquières map of degree $r\ge 2$, with maximal base point $p_0$ of multiplicity $r-1$, and other
base points $p_1,\ldots,p_{2r-2}$ of multiplicity $1$.
Let $\Lambda$ be a homaloidal linear system of degree $d = \deg(\Lambda)$, and write $m_q = m_q(\Lambda)$ the multiplicity of $\Lambda$ at a point $q$.
Then
\begin{align*}
\deg(\Lambda)-\deg(\phi(\Lambda))
&= \sum_{i=1}^{2r-2} m_{p_i}-(d-m_{p_0})(r-1)\\
&=\sum_{i=1}^{r-1}(m_{p_0}+m_{p_{2i-1}}+m_{p_{2i}}-d).
\end{align*}
In particular, if $\deg(\phi(\Lambda))< \deg(\Lambda)$, there exists $i\in \{1,\dots,r-1\}$ such that $m_{p_0}+m_{p_{2i-1}}+m_{p_{2i}}>d$.
\end{proposition}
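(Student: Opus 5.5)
We compute $\deg(\phi(\Lambda))$ via intersection numbers on a resolution. The degree of $\phi(\Lambda)$ equals the intersection of $\phi(\Lambda)$ with a general line, and a general line is the image under $\phi$ of a general member of the homaloidal system of $\phi$. Writing $\Lambda_\phi$ for that system, which has degree $r$, multiplicity $r-1$ at $p_0$ and $1$ at $p_1,\dots,p_{2r-2}$, we get
\[
\deg(\phi(\Lambda)) = \Lambda\cdot\Lambda_\phi - \sum_{q} m_q(\Lambda)m_q(\Lambda_\phi) = dr - (r-1)m_{p_0} - \sum_{i=1}^{2r-2} m_{p_i}.
\]
To justify this, pass to a common resolution $\eta\colon Z\to\P^2$ dominating the base points of $\phi$ and those of $\Lambda$. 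The strict transform of $\phi(\Lambda)$ is $\eta^*\Lambda-\sum m_q E_q$, the pullback of a line via $\eta'$ is $\eta^*\Lambda_\phi - \sum m_q(\Lambda_\phi)E_q$, and the $E_q$ (total transforms) are orthonormal with self-intersection $-1$. Only the base points of $\phi$ contribute to the sum.

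Subtracting from $d$ gives
\[
d-\deg(\phi(\Lambda)) = \sum_{i=1}^{2r-2} m_{p_i} + (r-1)m_{p_0} - d(r-1) = \sum_{i=1}^{2r-2}m_{p_i}-(d-m_{p_0})(r-1),
\]
which is the first equality. For the second, distribute $(r-1)(m_{p_0}-d)$ as $r-1$ copies of $m_{p_0}-d$, and group the $2r-2$ simple points in consecutive pairs $(p_{2i-1},p_{2i})$. This yields $\sum_{i=1}^{r-1}(m_{p_0}+m_{p_{2i-1}}+m_{p_{2i}}-d)$. The final claim follows at once: if the sum is positive, some summand is positive.

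The only delicate step is the intersection formula. One must check that $\phi(\Lambda)$ is computed by the strict transform in the standard way, i.e.\ that its degree is the intersection number of the transformed class with the pullback of a line under $\eta'$. One must also check that base points of $\Lambda$ not among the $p_i$ contribute nothing, since $\Lambda_\phi$ has multiplicity $0$ there. Both are standard consequences of working on the common resolution with the exceptional basis, so no real obstacle is expected.
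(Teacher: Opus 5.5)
Your proof is correct and follows the paper's route: both rest on the formula $\deg\phi(\Lambda)=dr-\sum_q m_q(\Lambda)\,m_q(\Lambda_\phi)$, specialised to the Jonquières multiplicities $r-1,1,\dots,1$, then subtract from $d$ and regroup the simple points in pairs. The only difference is that the paper cites this formula (Lemma 8.16 of the Cremona book), whereas you derive it on a common resolution via the projection formula and the orthogonal exceptional basis, which is a valid justification.
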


\begin{proof}
If $\phi \in \Bir(\P^2)$ is an arbitrary map with homaloidal type $(\delta; \mu_{p_0}, \mu_{p_1}, \dots)$, then we have (see e.g.~\cite[Lemma 8.16]{CremonaBook}):
\[
\deg \phi(\Lambda) = d \delta - \sum_{i\ge 0} m_{p_i} \mu_{p_i}.
\]
Specializing to the case of a Jonquières map of degree $r$, we have $\mu_{p_0}= r-1$ and $\mu_{p_i} = 1$ for $i \ge 1$, so that
\[
\deg \phi(\Lambda) = dr - m_{p_0}(r-1) - \sum_{i\ge1} m_{p_i}.
\]
Subtracting $\deg \Lambda = d$, we obtain the desired equality.
\end{proof}

\subsection{Volume preserving maps}

Let $X$ be a smooth rational projective surface.
An element $D \in \ls{-K_X}$ of the anticanonical linear system is called an \emph{anticanonical curve}.
Given a choice of such an anticanonical curve, we say that $(X,D)$ is a Calabi--Yau pair.
Let $\omega$ be a rational 2-form on $X$.
We say that $\omega$ is polar if it does not vanish.
In other words, the divisor $-\div(\omega)$ is effective, or equivalently is an anticanonical curve.

Conversely, to an anticanonical curve we can associate a polar 2-form $\omega$ such that $D = -\div(\omega)$, and $\omega$ is unique up to a multiplicative constant.
Indeed, let $\omega_0$ be an arbitrary $2$-form, and let $K_X$ denote the associated divisor.
By definition $D \in \ls{-K_X}$, so there exists a rational function $f$ such that $D = \div(f) + K_X$. Then $\omega = cf\cdot \omega_0$ for some constant $c$.

We now recall two results from \cite{Blanc2013}:
\begin{lemma}
\label{l:mult_blowup}
Let $\eta\colon Y \to X$ be the blowup of a point $p$ on a smooth projective surface $X$, and denote by $E\subset Y $ the exceptional divisor.
Let $K_X$ be the divisor associated to a form $\omega$ on $X$, and $K_Y$ the divisor associated to the form $\eta^*(\omega)$.
Denote by $\tilde K_X$ the strict transform of $K_X$ on $Y$, and let $m \in \Z$ be the multiplicity of $-K_X$ at $p$.
Then
\[
-K_Y = -\tilde K_X + (m-1)E.
\]
\end{lemma}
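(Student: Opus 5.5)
The plan is a local computation near $p$ in coordinates adapted to the blowup. Everything takes place in a neighbourhood of $E$, since $\eta$ is an isomorphism $Y\setminus E \to X\setminus\{p\}$. Away from $E$ the divisors $K_Y$ and $\tilde K_X$ agree: on $Y\setminus E$ the form $\eta^*\omega$ is just $\omega$ pulled back by an isomorphism, so the two divisors have the same components with the same coefficients off $E$. Hence $K_Y = \tilde K_X + cE$ for some integer $c$, and it suffices to compute the order of vanishing of $\eta^*\omega$ along $E$ and compare it with the coefficient of $E$ in $\tilde K_X$, which is zero by definition of the strict transform.

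To compute $c$, I would choose local coordinates $(x,y)$ at $p$ and write $\omega = g\, dx\wedge dy$ with $g$ a rational function, so that $K_X$ is locally the divisor of $g$. The hypothesis says that $\div(g)$ has multiplicity $-m$ at $p$. Concretely, writing $g = g_1/g_2$ with $g_1, g_2$ regular at $p$, we have $\mathrm{mult}_p(g_1) - \mathrm{mult}_p(g_2) = -m$; this makes sense for non-effective divisors by linearity.

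In the chart $x = u$, $y = uv$ of the blowup, $E = \{u=0\}$ and $dx\wedge dy = u\, du\wedge dv$. Also $\eta^* g_i = u^{\mathrm{mult}_p(g_i)} \tilde g_i$, where $\tilde g_i$ defines the strict transform and is not divisible by $u$. Therefore
\[
\eta^*\omega = u^{-m+1}\, \frac{\tilde g_1}{\tilde g_2}\, du\wedge dv ,
\]
and $\div(\eta^*\omega) = \tilde K_X + (1-m)E$ near $E$. Negating gives $-K_Y = -\tilde K_X + (m-1)E$. The other chart is symmetric, and the whole of $E$ is covered by the two charts.

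There is no real obstacle. The only point needing care is that the strict transform and the multiplicity must be defined additively for a possibly non-effective divisor; this is handled by linearity through the decomposition $g = g_1/g_2$. Equivalently, one can cite the standard formulas $K_Y = \eta^*K_X + E$ and $\eta^* D = \tilde D + \mathrm{mult}_p(D)E$ with $D = K_X$ and $\mathrm{mult}_p(K_X) = -m$. These give $K_Y = \tilde K_X + (1-m)E$, which is the same identity.
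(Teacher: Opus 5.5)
Your proof is correct and is essentially the paper's argument: the paper simply cites the ramification formula $K_Y = \eta^*K_X + E$, and your chart computation (the factor $u$ in $dx\wedge dy = u\,du\wedge dv$ giving the $+E$, and $\eta^*g_i = u^{\mathrm{mult}_p(g_i)}\tilde g_i$ giving $\eta^*K_X = \tilde K_X - mE$) is just that formula unpacked in coordinates. Your closing remark states exactly the paper's route.
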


\begin{proof}
Follows from the ramification formula
\(
K_Y = \eta^* K_X + E.
\)
\end{proof}

\begin{proposition}
\label{p:base_points_on_poles}
Let $\phi\colon X_1 \rat X_2$ be a birational map between two smooth projective surfaces.
Let $\omega_1$ be a polar form on $X_1$.
Then $\phi_*(\omega_1)$ is a polar form on $X_2$ if and only if all base points of $\phi$ lie on pole of $\omega_1$.
\end{proposition}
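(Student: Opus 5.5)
We resolve $\phi$ minimally, $\eta\colon Z\to X_1$, $\eta'\colon Z\to X_2$, and compare divisors of $\omega_1$ on $X_1$, on $Z$ and on $X_2$. Since $\phi_*(\omega_1)$ pulls back to $Z$ as $\eta^*\omega_1$, polarity of $\phi_*(\omega_1)$ on $X_2$ is a statement about $\eta'_*$ of the divisor $-\div(\eta^*\omega_1)$ on $Z$. Pushing forward preserves effectivity, and every curve of $Z$ not contracted by $\eta'$ survives in $X_2$. Hence $\phi_*(\omega_1)$ is polar if and only if every component of $-\div(\eta^*\omega_1)$ with negative coefficient is contracted by $\eta'$.

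First I compute $-\div(\eta^*\omega_1)$ by iterating \cref{l:mult_blowup} along $\eta=\eta_1\circ\dots\circ\eta_r$. At step $i$ let $D_i=-\div$ of the pulled-back form on $X_i$, and let $m_i$ be the multiplicity of $D_i$ at $p_i$. Then $D_{i+1}=\tilde D_i+(m_i-1)E_i$. Thus the strict transforms of the components of $D_1$ keep their coefficients, and each exceptional curve $E_i$ enters with coefficient $m_i-1$, later modified only by strict transforms. The new coefficient is $\ge 0$ iff $m_i\ge 1$, i.e.\ iff $p_i$ lies on the support of $D_i$, which is the total pole divisor. So the poles on $Z$ are effective except possibly along the $E_i$ with $p_i$ not on the poles, which get coefficient $-1$.

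For ``if'': suppose every base point $p_i$ lies on the poles of the transformed form (the proper ones on $D_1$, the infinitely near ones on the support of $D_i$). Inductively each $D_{i+1}$ is effective, so $-\div(\eta^*\omega_1)$ is effective on $Z$ and its pushforward under $\eta'$ is effective. Hence $\phi_*(\omega_1)$ is polar.

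For ``only if'': suppose some base point does not lie on the poles, and take the first such, $p_i$. Then $E_i$ gets coefficient $-1$ in $D_{i+1}$. Later blowups at points of $E_i$ add their own exceptional curves with coefficient (coefficient sum at the point) $-1$, which may be even more negative. The zeros therefore propagate to the last exceptional curve in the tower over $p_i$. The key step is to produce a $(-1)$-curve of $\eta$ over $p_i$ that carries a negative coefficient and is not contracted by $\eta'$. For this I will use minimality of the resolution: the curves contracted by $\eta$ and by $\eta'$ share no $(-1)$-curve, since otherwise one could drop a blowup. Concretely, I take the last $E_j$ infinitely near $p_i$ with no further base points on it. It is a $(-1)$-curve on $Z$ contracted by $\eta$, hence not by $\eta'$. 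Its coefficient is $m_j-1$, where $m_j$ is the multiplicity at $p_j$ of the current pole divisor. This multiplicity is $\le 0$: the points $p_j$ on a chain starting from a non-pole point only meet exceptional curves with coefficient $\le -1$ or non-pole curves. So the image of $E_j$ is a curve in $X_2$ along which $\phi_*(\omega_1)$ vanishes, and $\phi_*(\omega_1)$ is not polar.

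The main obstacle is this last sign bookkeeping: I must show that the multiplicity of $D$ at points infinitely near a non-pole point stays $\le 0$, so the negative coefficient cannot be cancelled. I would handle it by induction along the chain, noting that such points lie only on exceptional curves over $p_i$, all of which carry negative coefficients.
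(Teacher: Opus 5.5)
Your proof is correct and follows essentially the same route as the paper: you iterate \cref{l:mult_blowup} along the blowups for the ``if'' direction, and for the converse you take a last base point $p_j$ above the first non-pole base point $p_i$, whose exceptional curve has negative coefficient and maps onto a curve of $X_2$. You also make explicit two points the paper leaves implicit: that $E_j$ is not contracted by $\eta'$ because the resolution is minimal, and the induction showing that all coefficients over $p_i$ stay $\le -1$.
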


\begin{proof}
If each base point lies on a pole of $\omega_1$, then for each blowup we can apply \cref{l:mult_blowup} with $m \ge 1$, and so each exceptional divisor appears with multiplicity $m-1 \ge 0$.

Conversely, assume that one of the base point $p_i$ does not lie on a pole of $\omega_1$. Then the corresponding exceptional divisor $E_i$ appears with multiplicity $-1$ in the anticanonical divisor, and all base points infinitely near to $p_i$ also admit a negative coefficient.
Let $p_j$ be a base point infinitely near to $p_i$ and without any infinitely near point above $p_j$ (possibly $p_j = p_i$).
Then the associated exceptional divisor $E_j$ projects to a curve in $X_2$, hence is part of the support of $D_2$, the anticanonical curve associated to $\omega_2=\phi_*(\omega_1)$, with negative coefficient.
\end{proof}

Let $(X, D)$ and $(X', D')$ be two Calabi--Yau pairs, with associated forms $\omega$, $\omega'$.
We say that a birational map $\phi\colon X \rat X'$ is \emph{volume preserving} if $\phi^* \omega' = \lambda \omega$ for some constant $\lambda \in \k^*$.
If $\phi\colon (X_1, \omega_1) \rat (X_2, \omega_2)$ is a volume preserving birational map, we say that $\omega_1$ and $\omega_2$ are \emph{birationally equivalent}.

\begin{example}
\label{e:quadratic}
Let $C_1\subset \P^2$ be a cubic curve, and $p_1$, $p_2$, $p_3$ be three proper noncollinear points on $C_1$.
Then by \cref{p:base_points_on_poles}, any quadratic map $\sigma\colon \P^2 \rat \P^2$ with these points as base points is a volume preserving map.
To determine the form $\omega_2$ on the target, one has to determine the new locus of poles, which is a cubic $C_2 \subset \P^2$.
By \cref{l:mult_blowup}, if the multiplicity of $C_1$ at $p_i$ is $m_i$, then the exceptional divisor carries a pole of order $m_i - 1$.
Moreover, the three contracted curves are the lines through two of the $p_i$.
For instance, in \cref{f:basic_quadratic_maps}, top left, we start from a cuspidal cubic $C_1$. After blowing up the singular point we get another curve of pole (tangent to the strict transform of the cubic), and the blowup of two smooth points does not add any pole.
After contraction, we get a cubic curve with two tangent components, which must be a conic and a tangent line.

Similarly, if $p_2$ is infinitely near to $p_1$, then either the curve $C_1$ has multiplicity $m_1 \ge 2$ at $p_1$ (\cref{f:basic_quadratic_maps} shows two such examples), or it is smooth at $p_1$ with tangent line in the direction of $p_2$ (see \cref{f:equal_type_4} for an example).
In diagrams, we represent an infinitely near point by an arrow corresponding to the tangent direction determined by the point.
\end{example}

\begin{lemma}
\label{l:jonquieres}
Let $D, D'$ be two anticanonical curves on $\P^2$, and let
$\phi\colon (\P^2, D) \rat (\P^2, D')$ be a volume preserving map.
Then there exists a factorisation $\phi = \phi_n \circ \dots \circ \phi_1$, where each $\phi_i\colon (\P^2,D_i) \rat (\P^2,D_{i+1})$ is a volume preserving Jonquières map, for some anticanonical curves $D_1 = D, \dots, D_{n+1} = D'$.
\end{lemma}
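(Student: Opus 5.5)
We argue by induction on $\deg(\phi)$, using the classical Noether--Castelnuovo degree-lowering step of \cref{p:simplifying_jonquieres} and checking at each step that the Jonquières maps produced are volume preserving. If $\deg\phi=1$, then $\phi$ is itself a (linear) Jonquières map, volume preserving between $(\P^2,D)$ and $(\P^2,D')$, and there is nothing to prove. So assume $d=\deg\phi\ge 2$. The key input is \cref{p:base_points_on_poles}. Write $\omega$ and $\omega'$ for the polar forms with $-\div(\omega)=D$ and $-\div(\omega')=D'$. Since $\phi^*\omega'=\lambda\omega$ and $\omega'$ is polar, the form $\phi_*\omega$ is a constant multiple of $\omega'$, hence polar. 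Therefore every base point of $\phi$, proper or infinitely near, lies on the poles of $\omega$, i.e.\ on $D$ or its strict transforms.

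Next apply \cref{p:simplifying_jonquieres} to $\phi$. It gives a Jonquières map $\phi_1$ of degree $k\ge 2$ with $\deg(\phi\circ\phi_1^{-1})<\deg(\phi)$. The base points of $\phi_1$ are $p_0,p_1,\dots,p_{2k-2}$, all of which are base points of $\phi$. By the previous paragraph they lie on the poles of $\omega$. Applying \cref{p:base_points_on_poles} to $\phi_1$ shows that $\omega_2:=(\phi_1)_*\omega$ is polar. Setting $D_2=-\div(\omega_2)$, an anticanonical curve, makes $\phi_1\colon(\P^2,D_1)\rat(\P^2,D_2)$ volume preserving by construction, where $D_1=D$.

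Then $\psi=\phi\circ\phi_1^{-1}\colon(\P^2,D_2)\rat(\P^2,D')$ satisfies
\[
\psi^*\omega'=(\phi_1^{-1})^*\phi^*\omega'=\lambda(\phi_1^{-1})^*\omega=\lambda\,\omega_2,
\]
so $\psi$ is volume preserving and has strictly smaller degree. By induction, $\psi=\phi_n\circ\dots\circ\phi_2$ with each $\phi_i$ a volume preserving Jonquières map between suitable anticanonical curves $D_i$ and $D_{i+1}$, ending at $D_{n+1}=D'$. Composing gives the required factorisation $\phi=\phi_n\circ\dots\circ\phi_1$.

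The main point needing care is that the base points of $\phi_1$ really are base points of $\phi$, including the infinitely near ones with their correct infinitely-near structure on the same blowups. Only then does the "on the poles" property transfer to $\phi_1$. Both \cref{p:simplifying_jonquieres} and \cref{p:base_points_on_poles} are phrased in terms of the same sequence of blowups, so this should be immediate, but it is the step to check.
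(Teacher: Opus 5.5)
Your proof is correct and takes the same approach as the paper, whose proof just says the lemma follows from \cref{p:simplifying_jonquieres} and \cref{p:base_points_on_poles}; you have spelled out the induction on degree and the transfer of the volume-preserving property to $\phi_1$ and to $\phi\circ\phi_1^{-1}$. The step you flag is harmless. Whether an infinitely near point lies on the pole depends only on the points it is infinitely near to. Those points are themselves base points of $\phi_1$, so the ``on the poles'' condition carries over from $\phi$ to $\phi_1$.
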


\begin{proof}
Follows from \cref{p:simplifying_jonquieres,p:base_points_on_poles}.
\end{proof}

\subsection{Birational classification}

The following is essentially \cite[Proposition B]{DillerLin}.
We take homogeneous coordinates $\pcoor{x:y:z}$ on $\P^2$ and use the classical embedding $\A^2\hookrightarrow \P^2, (x,y)\mapsto \pcoor{x:y:1}$.

\begin{proposition}
\label{p:birational_classification}
Let $\omega$ be a polar form on $\P^2$.
Then $\omega$ is birationally equivalent to one and only one of the following forms on $\P^2$:
\begin{description}
\item[Smooth case] A form $\omega_0$ with $-\div(\omega_0)$ a smooth cubic curve.
\item[Euclidean case] The form with affine expression $dx \wedge dy$ on $\A^2$, corresponding to the projective anticanonical curve $z^3 = 0$.
\item[Toric case] The form with affine expression $\frac{dx}{x} \wedge \frac{dy}{y}$ on $(\k^*)^2$, corresponding to the projective anticanonical curve $xyz = 0$.
\end{description}
Moreover:
\begin{itemize}
\item The form $\omega$ is birationally equivalent to the Euclidean case if and only if the anticanonical curve associated with $\omega$ is
\begin{enumerate}
\item a triple line,
\item or the union of a double line and another line,
\item or the union of three lines passing through a same point,
\item or the union of a smooth conic and a tangent line,
\item or a cuspidal cubic.
\end{enumerate}
\item The form $\omega$ is birationally equivalent to the toric case if and only if the anticanonical curve associated with $\omega$ is
\begin{enumerate}[resume]
\item the union of three lines not passing through a same point,
\item or the union of a smooth conic and a nontangent line,
\item or a nodal cubic.
\end{enumerate}
\end{itemize}

\end{proposition}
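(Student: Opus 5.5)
We will prove \cref{p:birational_classification} in three steps: first show that each of the eight listed curves is birationally equivalent to the indicated model, then show that every polar form on $\P^2$ with singular (or reducible) pole divisor falls into one of the eight cases, and finally show that the three models are pairwise inequivalent.

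\emph{Step 1: the Euclidean list.} The main tool is \cref{e:quadratic}: a quadratic map whose three base points lie on the pole cubic is volume preserving, and \cref{l:mult_blowup} lets us compute the new pole cubic. Starting from the cuspidal cubic (5), we blow up the cusp and two smooth points, as in the example. This gives a conic with a tangent line, so (5) is equivalent to (4). Next we take the three base points on (4) to be the tangency point, the infinitely near point in the common tangent direction, and one further point of the conic. The new pole divisor is a double line plus a line, so (4) is equivalent to (2). Choosing base points at the double point of a double line plus a line, or of three concurrent lines, produces the remaining configurations (1), (2) and (3). We would close the chain at the triple line $z^3=0$ by checking one explicit quadratic map each time.

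\emph{Step 1, continued: the toric list.} Here the model is three non-concurrent lines. A quadratic map based at one node of the triangle and at two further points on the lines takes the triangle to a conic with a transversal line. From a conic plus a secant line, blowing up one intersection point and two other points gives a nodal cubic. This shows that (6), (7) and (8) are equivalent.

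\emph{Step 2: exhaustiveness.} A plane cubic is either smooth or one of the singular types listed: cusp, node, conic plus line (tangent or not), three lines (concurrent or not), double line plus line, or triple line. So every polar form on $\P^2$ is already in one of the listed forms or is smooth. The smooth case is its own model.

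\emph{Step 3: uniqueness.} This is the main obstacle. We need invariants of a polar form that are unchanged under volume preserving maps. To compute them we pass to a log resolution, which is allowed because blowups at points on the poles preserve polarity by \cref{p:base_points_on_poles}. For the smooth case we use that $\omega$ has no pole of order $\ge 2$ on any model, and that the "essential" pole locus contains a curve of positive genus. This genus is a birational invariant, since all other poles come from exceptional divisors, which are rational. The toric and Euclidean cases both have rational pole components, so we separate them by the behaviour of the form near its poles. In the toric case we can reach a model where $-\div\omega$ is reduced with normal crossings, which gives log canonical singularities with a zero-dimensional stratum. In the Euclidean case every model has a pole of order $\ge 2$, or equivalently the pair is not lc. A cleaner invariant is the existence of a valuation with discrepancy $a(E,X,D)=-1$ versus $<-1$. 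We expect this to be the delicate part, and we would follow \cite[Proposition B]{DillerLin} for the precise invariant.
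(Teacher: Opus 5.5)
Your Steps~1 and~2 and the smooth-cubic part of Step~3 follow the paper's proof. That proof uses explicit volume preserving quadratic maps inside each class, computed with \cref{e:quadratic} and \cref{l:mult_blowup}, the list of plane cubics, and the fact that a genus-one pole curve is never exceptional, so it must reappear as a smooth cubic on the other side. One of your explicit choices is wrong, however. From a conic plus a secant line, three proper base points, one of them an intersection point, cannot give a nodal cubic. The exceptional curve over that point carries a simple pole and becomes a line contained in the new pole cubic. Instead, take two points of the line off the conic and one point of the conic off the line. The line is then contracted onto the node; this is the inverse of the map from a nodal cubic based at the node and two smooth points.

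The genuine gap is the separation of the Euclidean and toric classes. You leave it open by listing candidate invariants and deferring to Diller--Lin. Your first candidate is also false: the cuspidal cubic, the conic with a tangent line and three concurrent lines are all reduced, so $\P^2$ itself is a Euclidean model without poles of order $\ge 2$. The discrepancy invariant you mention last does close the gap in a few lines. By Step~1 it suffices to compare $dx\wedge dy$ with $\frac{dx}{x}\wedge\frac{dy}{y}$.
\begin{itemize}
\item If $\phi^*\omega_2=\lambda\omega_1$ and $\eta_1,\eta_2\colon Z\to\P^2$ resolve $\phi$, then $\div(\eta_1^*\omega_1)=\div(\eta_2^*\omega_2)$.
\item Take $\omega_2=\frac{dx}{x}\wedge\frac{dy}{y}$. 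Its pole divisor $xyz=0$ is reduced with simple normal crossings. By \cref{l:mult_blowup}, blowing up a point on an snc divisor whose coefficients are all $\le 1$ creates a coefficient $m-1\le 1$, since at most two smooth components pass through the point, and it preserves normal crossings. As $\eta_2$ is a composition of point blowups, every coefficient of $-\div(\eta_2^*\omega_2)$ is $\le 1$.
\item For $\omega_1=dx\wedge dy$, the strict transform of $L$ in $Z$ has coefficient $3$ in $-\div(\eta_1^*\omega_1)$, which is a contradiction.
\end{itemize}

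The paper argues differently: on a common resolution, the pole support coming from a nodal cubic contains a cycle of rational curves, while the one coming from a cuspidal cubic does not. Your invariant is purely numerical. It avoids tracking how the configuration of pole components evolves through tangencies and triple points, and uses only one coefficient on the Euclidean side plus the log canonicity of the triangle.
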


\begin{proof}
If $D = -\div(\omega)$ is a smooth cubic, $D$ is not contracted by any birational map $\phi \in \Bir(\P^2)$.
So if $\phi\colon (\P^2, D) \rat (\P^2, D')$ is a volume preserving map, $\phi_* \omega$ admits $\phi(D)$ as a pole, and $D' = \phi(D)$ must be a smooth cubic curve.

If $D = -\div(\omega)$ is a singular cubic, then there are eight possibilities up to projective equivalence, as listed in the statement.
The fact that inside the two groups the curves are birationally equivalent follows by explicitly checking that we can pass from one to the other by a sequence of volume preserving quadratic maps, see \cref{e:quadratic} and \cref{f:basic_quadratic_maps}.

Finally, assume by contradiction that there is a volume preserving birational map $\phi$ and a resolution diagram
\[
\begin{tikzcd}
& (Z,D) \ar[dl,"\eta_1",swap] \ar[dr,"\eta_2"]\\
(\P^2, C_1) \ar[rr,"\phi",dashed] && (\P^2, C_2)
\end{tikzcd}
\]
 where $C_1 = -\div(\omega_1)$ is a nodal cubic, $C_2=-\div(\omega_2)$ is a cuspidal cubic, and $(Z, D)$ is the minimal resolution of $\phi$.
The transform of the form $\omega_1$ on $Z$ must have a support with a cycle of rational curves, whereas the transform of the form $\omega_2$ does not carry such a cycle. This contradicts $\div(\eta_1^*\omega_1) = \div(\eta_2^*\omega_2) = -D$.
\end{proof}

\begin{figure}[ht]
\centering
\begin{tikzcd}
\includecurve{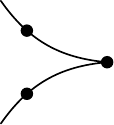} \ar[r, "\sigma"] &
\includecurve{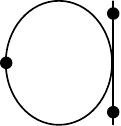} &
\includecurve{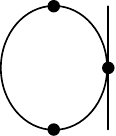} \ar[r, "\sigma"] &
\includecurve{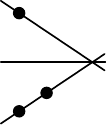}
\\
\includecurve{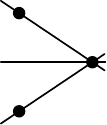} \ar[r, "\sigma"] &
\includecurve{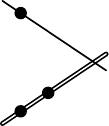} &
\includecurve{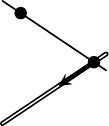} \ar[r, "\sigma"] &
\includecurve{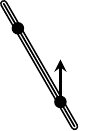}
\\
\includecurve{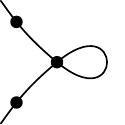} \ar[r, "\sigma"] &
\includecurve{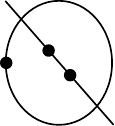} &
\includecurve{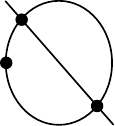} \ar[r, "\sigma"] &
\includecurve{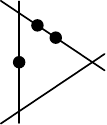}
\end{tikzcd}
\caption{Quadratic equivalences between forms on $\P^2$.}
\label{f:basic_quadratic_maps}
\end{figure}

\subsection{Links}\label{SubSec:Links}

The Hirzebruch surface of index $n \ge 0$ is the $\p^1$-bundle $\F_n \to \P^1$, which has a section of minimal self-intersection $-n$.
This property characterises $\F_n$ up to isomorphism, and any rational surface with a structure of $\P^1$-bundle is isomorphic to $\F_n$ for some $n\ge 0$.
The surface $\F_0$ is isomorphic to $\P^1 \times \P^1$, and $\F_1$ to the blowup of $\P^2$ at one point.

Given a point $p$ on $\F_n\to \p^1$, we call link (also called Sarkisov link of type II, or elementary transformation) centred at $p$ a birational map $\chi\colon\F_n \rat \F_{n\pm1}$ that fits in a commutative diagram
\[
\begin{tikzcd}
& Z \ar[dl] \ar[dr] \\
\F_n \ar[dr] \ar[rr,dashed,"\chi"] && \F_{n\pm1}, \ar[dl] \\
& \P^1
\end{tikzcd}
\]
where $Z \to \F_n$ is the blowup of $p$, and $Z \to \F_{n\pm1}$ is the contraction of the strict transform of the fibre through $p$.
When $n = 0$, the resulting surface is always $\F_1$, and when $n \ge 1$, the resulting surface is $\F_{n+1}$ (resp. $\F_{n-1}$) precisely if $p \in E$ (resp. $p \notin E$), where $E \subset \F_n$ is the section with $E^2 = -n$.
Observe that the structure of $\P^1$-bundle on the resulting surface $\F_{n\pm1}$ is uniquely determined by the diagram.

\begin{lemma}
\label{l:links_jonquieres}
Let $\phi \in \Bir(\P^2)$ be a Jonquières map of degree $d \ge 2$, and let $p,q \in \P^2$ such that $\phi$ sends the pencil of lines through $p$ to the pencil of line through $q$. Let $\eta\colon X_1=\F_1 \to \P^2$ be the blowup of $p$ and $\pi_1\colon X_1\to \p^1$ a $\p^1$-bundle $($unique up to automorphism of $\p^1)$.
Then there is a sequence of $2d -2$ links $\chi_1, \dots, \chi_{2d-2}$
\[
\chi_i\colon X_i\dasharrow X_{i+1}
\]
and $\p^1$-bundles $\pi_i\colon X_i\to \p^1$ such that $\pi_{i+1}\chi_i=\pi_i$ for each $i\in \{1,\dots,2d-2\}$ and such that
\[\phi= \sigma \chi_{2d-2} \dots \chi_1 \eta^{-1},\]
where $\sigma\colon X_{2d-1}\simeq \F_1 \to \P^2$ is a birational morphism contracting the negative section of $\pi_{2d-1}$ onto the point $q$.
Moreover, this factorisation is dominated by the minimal resolution of $\phi$.
\end{lemma}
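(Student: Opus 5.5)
We plan to lift $\phi$ to a birational map $\hat\phi = \eta_q^{-1}\circ\phi\circ\eta\colon X_1=\F_1 \rat \F_1$, where $\eta_q$ is the blowup of $q$. Since $\phi$ sends the pencil of lines through $p$ to the pencil through $q$, $\hat\phi$ preserves the $\P^1$-bundle structures, up to an automorphism of $\P^1$. We then decompose $\hat\phi$ into links by induction on the number of base points, keeping track of the fibration at each step.

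First, the base points. The homaloidal system of $\phi$ consists of curves of degree $d$ with multiplicity $d-1$ at $p$ and multiplicity $1$ at $p_1,\dots,p_{2d-2}$. On $\F_1$, write $f$ for the class of a fibre and $E$ for the class of the negative section. The strict transform of $\Lambda$ has class $dH-(d-1)E = E + d f$, using $H = E+f$. Since $\phi$ is Jonquières of degree $d\ge 2$, it is not an isomorphism $\F_1\to\F_1$, so $\hat\phi$ has base points, namely the strict transforms of $p_1,\dots,p_{2d-2}$. The base points of $\hat\phi$ are exactly those of $\phi$ except $p$. Among them we pick a base point $x$ that is proper on $X_1$ (a point of the surface itself, not infinitely near). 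We take $\chi_1$ to be the link centred at $x$, and $\pi_2$ the induced fibration, so that $\pi_2\chi_1=\pi_1$.

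Next, the key computation. The link $\chi_1$ blows up $x$ and contracts the strict transform of the fibre $F_x$ through $x$. A general member of the system has class $E+df$ and meets $F_x$ with intersection $1$, while it has multiplicity $1$ at $x$. Hence on the blowup it no longer meets the strict transform $\tilde F_x$. Contracting $\tilde F_x$ therefore creates no new base point, and the fibre degree of the system stays equal to $1$. The new system has $2d-3$ base points, all of multiplicity $1$, namely the images of the remaining $p_i$. A point infinitely near to $x$ becomes a proper point on the exceptional curve, which is now a fibre.

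We then iterate. After $2d-2$ links we reach $X_{2d-1}$, where the system has no base points and its class is a section class $s$ with $s\cdot f=1$. Self-intersection and genus bookkeeping give $s^2 = 1$: each link lowers $s^2$ by $1$ for the blown-up point and the contraction leaves it unchanged, so $s^2 = (E+df)^2-(2d-2) = 2d-1-(2d-2) = 1$. A base-point-free system of sections with $s^2=1$ whose image is the net of lines exists only on $\F_1$, with $s=E'+f$. This shows $X_{2d-1}\simeq\F_1$, and $\sigma$ is the contraction of its negative section $E'$. Since the system maps to lines through $\sigma$ and the fibres go to the pencil through $q$, $\sigma$ contracts $E'$ onto $q$. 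The equality $\phi=\sigma\chi_{2d-2}\cdots\chi_1\eta^{-1}$ then holds because both sides have the same homaloidal system and agree on the pencil, possibly after composing with an automorphism; this automorphism can be absorbed into $\sigma$.

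Domination by the minimal resolution holds because every blowup used is a base point of $\phi$: first $p$, then the successive $p_i$. The points contracted by the links are exactly the curves contracted in the minimal resolution, namely the strict transforms of the lines through $p$ and $p_i$.

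The main obstacle is the choice of centre at each step. We must choose a point that is proper on the current surface, and show that after the link every remaining base point still has multiplicity exactly $1$, with no new base points appearing. This reduces to the intersection computation with $\tilde F_x$ above. Some care is needed when several $p_i$ lie on the same fibre, or when $p_i$ lies in the first neighbourhood of $p$, that is, on $E$. In these cases we must check that the system still meets the contracted fibre with multiplicity $1$ at the chosen point, so that the contraction is genuinely of a $(-1)$-curve not meeting the system.
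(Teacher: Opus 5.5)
Your proposal is correct and follows essentially the paper's route. You blow up $p$ to get a system of sections on $\F_1$ with $2d-2$ simple base points, perform links at proper base points one at a time, and identify the final base-point-free surface as $\F_1$ with $\sigma$ contracting onto $q$. Your computation that a general member no longer meets $\tilde F_x$ is exactly why each link removes one base point and creates none, and it already settles the special cases you flag; for instance, two proper base points never lie on one fibre. The only difference is that you use $s^2=1$ where the paper uses the fact that a birational morphism from a Hirzebruch surface to $\P^2$ is the blowup of a point.

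One minor inaccuracy: when a centre is infinitely near an earlier one, the contracted fibre is the transform of an earlier exceptional curve, not of a line through $p$. Domination only needs that every blown-up point is a base point of $\phi$, which you do state.
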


\begin{proof}
One can invoke the general Sarkisov program, see e.g.~\cite[16.79]{CremonaBook}, but it is also easy to give a direct argument.
The linear system of $\phi$ is the system of plane curves of degree $d$ passing with multiplicity $d-1$ through $p$, and passing simply through $2d-2$ other base points.
After the blowup of $p$, this system becomes a system of sections on $\F_1$, with $2d-2$ simple base points.
Pick any proper base point $r$, and perform the link centred at $r$. This decreases the number of base points by one. After $2d-2$ such links, we get a birational morphism from a Hirzebruch surface to $\P^2$, which must be the blowup of $q$, so $X_{2d-1}\simeq \F_1$.
\end{proof}

Let $\F_n$ be the Hirzebruch surface of index $n$, and $\omega$ a $2$-form on $\F_n$.
We say that $\omega$ is a \emph{standard form} if $-\div(\omega) = 2 E + \sum_i m_i F_i$, for $E$ a section (not necessarily the exceptional section), $F_1, \dots, F_r$ some fibres, and $m_1, \dots, m_r \in \N$ some integers.
We write $\ord_\omega(F_i) = -m_i$.

\begin{lemma}
\label{l:one_link_on_form}
Let $\omega$ be a standard form on $\F_n$, $p \in \F_n$ a point, and $F$ the fibre containing $p$.
Let $\chi\colon \F_n \rat \F_{n\pm1}$ be the link centred at $p$, and write $F'$ the strict transform of the exceptional divisor above $p$.
Let $\omega' = \chi_*(\omega)$.
\begin{enumerate}
\item If $p \in E \cap F$, then $\ord_{\omega'}(F') = \ord_\omega(F)-1$.
\item If $p \in F \setminus E$, then $\ord_{\omega'}(F') = \ord_\omega(F)+1$.
\end{enumerate}
In particular, $\omega'$ is a standard form if and only if $p$ is on a pole of $\omega$.
\end{lemma}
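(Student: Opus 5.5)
The plan is a direct local computation with \cref{l:mult_blowup} applied to the blowup $\eta\colon Z\to\F_n$ of $p$, followed by the contraction $\tau\colon Z\to\F_{n\pm1}$ of the strict transform $\tilde F$ of $F$. Write $G=\eta^{-1}(p)$ for the exceptional curve; its image under $\tau$ is the fibre $F'$. Since $\tau$ is an isomorphism near $G\setminus\tilde F$, we have $\ord_{\omega'}(F')=\ord_{\eta^*\omega}(G)$. Set $-K=-\div(\omega)=2E+\sum_i m_iF_i$, and let $m_F\ge 0$ be the coefficient of $F$ in this sum, so that $\ord_\omega(F)=-m_F$.

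\textbf{Computing the order along $G$.} The multiplicity of $-K$ at $p$ is $\mu=2\,\mathrm{mult}_p(E)+m_F$, using that $E$ and $F$ are smooth and that $p$ lies on no other fibre. By \cref{l:mult_blowup}, the coefficient of $G$ in $-\div(\eta^*\omega)$ is $\mu-1$.
\begin{enumerate}
\item If $p\in E\cap F$, then $\mu=2+m_F$, so $G$ has coefficient $m_F+1$ and $\ord_{\omega'}(F')=-m_F-1=\ord_\omega(F)-1$.
\item If $p\in F\setminus E$, then $\mu=m_F$, so $G$ has coefficient $m_F-1$ and $\ord_{\omega'}(F')=\ord_\omega(F)+1$.
\end{enumerate}

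\textbf{Checking that $\omega'$ is standard.} Contracting $\tilde F$ does not change the coefficients of the other components. These components are the image of $\tilde E$, which is a section of $\F_{n\pm1}$ with coefficient $2$, and the fibres $F_i\ne F$, whose coefficients are unchanged. So $-\div(\omega')=2E'+(\text{coefficient of }G)\,F'+\sum_{F_i\ne F}m_iF_i$, where $E'=\tau(\tilde E)$ is again a section. Hence $\omega'$ is standard exactly when the coefficient of $G$ is nonnegative.
\begin{enumerate}
\item In case (1), the coefficient $m_F+1$ is always positive, and $p\in E$ is on a pole.
\item In case (2), the coefficient $m_F-1$ is nonnegative if and only if $m_F\ge1$, that is, if and only if $F$ is a pole. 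Since $p\notin E$, this is equivalent to $p$ lying on a pole of $\omega$.
\end{enumerate}

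The one delicate point is the claim that the coefficient of $E'$ stays $2$ and that $\tilde F$ carries no further contribution. This follows because $\tau$ is a blowup whose exceptional curve is $\tilde F$, so $\tau_*$ simply forgets the $\tilde F$ term. Alternatively, one can appeal to \cref{p:base_points_on_poles} for the polarity claim and use the computation above only for the coefficient.
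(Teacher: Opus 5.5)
Your proposal is correct and takes the same approach as the paper, whose proof is the single remark that the lemma is a direct consequence of \cref{l:mult_blowup}. You make that explicit: the multiplicity of $-\div(\omega)$ at $p$ is $2+m_F$ or $m_F$, and contracting $\tilde F$ leaves unchanged both the coefficient of the exceptional curve and those of the other components, so the standardness criterion follows; you also correctly read $E$ as the pole section of the standard form.
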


\begin{proof}
This is a direct consequence of \cref{l:mult_blowup}.
\end{proof}

\begin{proposition}
\label{p:unimodal}
Let $\phi \in \Jonq(3L)$ be a volume preserving Jonquières map of degree $d \ge 2$, with a maximal base point $p\in L$, and let $\phi = \sigma \chi_{2d-2} \dots \chi_1 \eta^{-1}$ be a factorisation as in \cref{l:links_jonquieres}.
Let $E=\eta^{-1}(p)\subset X_1=\F_1$ be the exceptional divisor above $p$.
Let $\omega$ be a form on $\P^2$ with triple pole along $L$.
Then:
\begin{enumerate}
\item \label{p:unimodal:1}
On each Hirzebruch surface of the factorisation, the transform of $\omega$ is a standard form, with respect to the section given by the strict transform of $E$.
\item \label{p:unimodal:2}
The first and last Hirzebruch surfaces are $\F_1$ with standard form with poles of the form $-2E - 3 \tilde{L}$, where $\tilde{L}$ is the strict transform of $L$ by $\eta$ and $\sigma$ respectively.
\item \label{p:unimodal:3}
Given $u \in \P^1$, and $1 \le i \le 2d-1$, let $\omega_i$ be the transform on $X_i$ of the form $\omega$, $F_i=\pi_i^{-1}(u)$ the fibre over $u$ on $\F_{n_i}$. Then the sequence
\[-\ord_{\omega_1}(F_1),\dots, -\ord_{\omega_{2d-1}}(F_{2d-1})\] is unimodal \parent{increasing up to a point and then decreasing}.
\end{enumerate}
\end{proposition}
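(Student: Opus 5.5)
We prove the three items in order, using \cref{l:one_link_on_form} repeatedly along the factorisation $\phi = \sigma \chi_{2d-2} \dots \chi_1 \eta^{-1}$ of \cref{l:links_jonquieres}.

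\emph{Item \ref{p:unimodal:2} and the start of \ref{p:unimodal:1}.} The form $\omega$ has $-\div(\omega) = 3L$, and $p \in L$ has multiplicity $3$ on $3L$. By \cref{l:mult_blowup}, on $X_1 = \F_1$ we get $-\div(\eta^*\omega) = 3\tilde L + 2E$. Since $\tilde L$ is the strict transform of a line through $p$, it is a fibre of $\pi_1$, and $E$ is a section. So $\eta^*\omega$ is a standard form with respect to $E$, with poles $-2E-3\tilde L$. For the last surface we apply the same reasoning to $\phi^{-1}$. The point $q$ is the maximal base point of $\phi^{-1}$, and it lies on $L$ because $\phi^{-1}$ is volume preserving, so its base points are on the poles $L$ by \cref{p:base_points_on_poles}. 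Since $\sigma$ is the blowup of $q$, the form on $X_{2d-1}$ is $-2E' - 3\tilde L$, where $E'=\sigma^{-1}(q)$. It remains to identify $E'$ with the strict transform of $E$. The section $E$ is a pole of order $2$ at every step and is never contracted by a link, because links only contract fibres. Hence on $X_{2d-1}$ its transform is a section carrying a double pole. A standard form has only one non-fibre pole component, namely $2E'$. Therefore the transform of $E$ equals $E'$.

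\emph{Item \ref{p:unimodal:1}.} We argue by induction on $i$. Each link $\chi_i$ is centred at a base point of $\phi$, or more precisely at a point of $X_i$ corresponding to a base point. All base points lie on poles of $\eta^*\omega$ and its transforms; this comes from \cref{p:base_points_on_poles} applied to $\phi$, together with the fact that the minimal resolution dominates the factorisation. So \cref{l:one_link_on_form} shows that the transform stays standard. We also have to check that the section component stays the transform of $E$. The new fibre $F'$ is a fibre, and the section $E$ is not the contracted curve, so the section part $2E$ is simply carried along.

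\emph{Item \ref{p:unimodal:3}.} Fix $u$. The order of the fibre over $u$ changes only at links centred on that fibre. By \cref{l:one_link_on_form}, the quantity $-\ord$ increases by $1$ when the centre lies off $E$ and decreases by $1$ when the centre lies on $E$. Thus unimodality amounts to a claim about the base points of $\phi$ over $u$: in the order the links are performed, all centres off $E$ come before all centres on $E$. These base points form a chain of infinitely near points, since a fibre contains at most one base point per stage. Once a link is centred at a point of $E \cap F$, the new fibre $F'$ meets the transform of $E$ at the point where the old fibre was contracted. Any subsequent base point on this fibre therefore corresponds to a point infinitely near the contracted point, and it lies on $E$. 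So after a decrease we can only decrease further.

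This last step is the main obstacle. The delicate point is to justify that the next base point over $u$, after a link centred on $E$, must again lie on $E$. The plan is to use the structure of the homaloidal system, which consists of sections of $\F_{n_i}$. Base points of $\phi$ cannot lie on $E$ away from a fibre already used, except in a controlled way, because a general member of the system meets $E$ with prescribed intersection. We may also choose the order of the links: we are free to perform the links off $E$ first, as in the proof of \cref{l:links_jonquieres}. This choice gives unimodality directly.
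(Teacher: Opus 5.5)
Items \ref{p:unimodal:1} and \ref{p:unimodal:2} are fine and follow the paper's argument: base points lie on poles by domination by the minimal resolution and \cref{p:base_points_on_poles}, and then \cref{l:mult_blowup} and \cref{l:one_link_on_form} apply. Your identification of $\sigma^{-1}(q)$ with the transform of $E$ is the one the paper uses in the proof of \cref{Prop:BasePtsJonq}.

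The gap is in \ref{p:unimodal:3}, and it has three parts.
\begin{enumerate}
\item You have inverted \cref{l:one_link_on_form}. A link centred at $E\cap F$ gives $\ord_{\omega'}(F')=\ord_\omega(F)-1$, so $-\ord$ goes \emph{up}. A centre in $F\setminus E$ makes it go down. Unimodality therefore means that over each $u$ the centres on $E$ come first and the centres off $E$ come last, which is the opposite of what you set out to prove.
\item Your geometric claim is false. After blowing up $p=E\cap F$ one has $\tilde E\cdot\tilde F=E\cdot F-1=0$, so the point to which $\tilde F$ is contracted is \emph{not} on $E$. Your conclusion that after a centre on $E$ every later centre on that fibre is on $E$ already fails for a simple quadratic map. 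Its base points $p\leftarrow p_1\leftarrow p_2$ give a link at $p_1\in E$ followed by one at $p_2\notin E$. This is exactly the pattern $p_{i,1},\dots,p_{i,d_i-1}\in E$, with the later points off $E$, that appears in \cref{Prop:BasePtsJonq}.
\item The fallback of reordering the links does not work. Members of the system are sections, so at each stage a fibre contains at most one proper base point. The base points over $u$ form a chain $p_{i,1}\leftarrow p_{i,2}\leftarrow\cdots$, which must be blown up in this order. Besides, the factorisation is given to you.
\end{enumerate}

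The missing idea is the statement with the roles of $E$ and $F\setminus E$ swapped. Suppose a link is centred at $p\in F\setminus E$. Then $\tilde F$ still meets $\tilde E$, so the point $r$ to which $\tilde F$ is contracted is exactly the point $F'\cap E$. A link centred at $r$ would be the inverse of the previous link. Equivalently, $r$ is not a base point: a general member $C$ satisfies $C\cdot F=1$ and passes through $p$, hence $\tilde C\cdot\tilde F=0$. So every later centre on that fibre lies off $E$. Once $-\ord$ starts decreasing it keeps decreasing, since links centred on other fibres leave it unchanged. This is the unimodality in \ref{p:unimodal:3}, and it is the paper's argument.
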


\begin{proof}
Since the factorisation of $\phi$ into links is dominated by the minimal resolution of $\phi$, by \cref{p:base_points_on_poles} each point blown up by a link is on the pole of the corresponding form.
Since the first and last links correspond to a blowup $\F_1 \to \P^2$ of a point on the triple line of pole, by \cref{l:mult_blowup} we get that the form on each $\F_1$ has pole of the form $-2E -3F$, and so is a standard form.
By \cref{l:one_link_on_form}, all forms on intermediate Hirzebruch surfaces are also standard forms. This proves \ref{p:unimodal:1} and \ref{p:unimodal:2}.

Now fix $u \in \P^1$, $1 \le i \le 2d-1$, and let $p_i$ be the centre of the link  $X_i \rat X_{i+1}$.
If $p_i \notin F_i = \pi_i^{-1}(u)$, then $\ord_{\omega_1}(F_{i+1}) = \ord_{\omega_1}(F_i)$.
If $p_i \in F_i \cap E$, then by \cref{l:one_link_on_form} $-\ord_{\omega_1}(F_{i+1}) = -\ord_{\omega_1}(F_{i}) + 1$.
Finally, if $p_i \in F_i \setminus E$, then by \cref{l:one_link_on_form} $-\ord_{\omega_1}(F_{i+1}) = -\ord_{\omega_1}(F_{i}) - 1$, and all subsequent blowups on this fiber are outside the section $E$ (otherwise we would have two links inverse of each other in the factorisation).
This proves \ref{p:unimodal:3}.
\end{proof}

\section{Generation in the Euclidean case}
\label{sec:generation}
In this section, we prove the generating part of \cref{t:main,t:main_bis}.

\begin{definition}
We denote by $\Jonq(3L)$ the set of Jonquières maps preserving the form $\omega$ with triple pole along $L$.
In other words, elements of $\Jonq(3L)$ are maps of the form $a b a'$, with $a, a' \in A$ and $b \in B$.
\end{definition}

\begin{theorem}
\label{t:generators}
The group $\Bir(\P^2, 3L)$ of volume preserving maps is generated by the affine group $A = \k^2 \rtimes \GL_2(\k)$ and the Jonquières group $B$  of \cref{t:main}, or equivalently by the set $\Jonq(3L)$ of volume preserving Jonquières maps.
\end{theorem}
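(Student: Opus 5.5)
We argue by induction on the degree of $f \in \Bir(\P^2, 3L)$. If $\deg f = 1$, then $f$ is a linear automorphism preserving the pole divisor $3L$. Hence $f(L) = L$, so $f \in A$ and in particular $f \in \Jonq(3L)$. For $\deg f \ge 2$, the plan is to find $\phi \in \Jonq(3L)$ with $\deg(f\circ\phi^{-1}) < \deg f$; the composition $f\circ\phi^{-1}$ is again volume preserving for $(\P^2,3L)$, so the induction applies. The equivalence of the two generating sets comes from $\Jonq(3L)=\{aba'\}$. Conversely, each element of $B$ and of $A$ lies in $\Jonq(3L)$, since these maps preserve $\omega$ up to a constant.

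For the degree drop we use \cref{p:simplifying_jonquieres}. Take the major base points $p_0,\dots,p_h$ of $f$. It gives a Jonquières map $\phi$ of degree $k\ge 2$ whose maximal base point is $p_0$ and whose remaining base points are $p_1,\dots,p_{2k-2}$, and which lowers the degree of $f$.

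\begin{itemize}
\item All base points of $f$ lie on poles of $\omega$ (on $L$ or on infinitely near pole curves) by \cref{p:base_points_on_poles}, because $f_*\omega$ is polar.
\item The base points of $\phi$ are among those of $f$, so they also lie on poles. Applying \cref{p:base_points_on_poles} again, $\phi_*\omega$ is a polar form.
\end{itemize}

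The remaining issue is that the pole divisor of $\phi_*\omega$ need not be a triple line; a priori it is only some anticanonical curve. This is the main obstacle.

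Two routes handle it. The first is to post-compose $\phi$ with a volume preserving map $\psi$ of degree one, or otherwise controlled, bringing the new pole divisor back to $3L$. Then $\psi\phi\in\Jonq(3L)$, and $\deg(f\phi^{-1}\psi^{-1})=\deg(f\phi^{-1})$ when $\psi$ is linear. To see that the new pole divisor is a triple line, we use the structure of $f$. Its first base point $p_0$ lies on $L$, because a proper base point must lie on the only pole curve of $\P^2$. Blowing up $p_0$, the form becomes the standard form $-2E-3\tilde L$ on $\F_1$. We then run the factorisation of $\phi$ into links from \cref{l:links_jonquieres}, all centred at points on poles. By \cref{p:unimodal}, or rather by \cref{l:one_link_on_form} applied link by link, each intermediate form is standard, with poles $2E+\sum m_iF_i$ and $\sum m_i = 3$ (the degree of $-K$ on a fibre class count). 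At the last step $\sigma$ contracts the negative section to $q$. The image of the pole divisor is then a cubic made of the images of the fibres with $m_i>0$, all lines through $q$, with total multiplicity $3$. So the target pole divisor is either a triple line, or a double line plus a line through $q$, or three concurrent lines.

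The second route, which I find cleaner, replaces $\phi$ by the full chain of links and applies the same analysis to $f$ itself. At the end of the chain the pole divisor must again be $3L$, because the pole divisor of $f_*\omega$ is $3L$. Then one checks combinatorially that the partial composition up to $\phi$ can be arranged to end on a triple line. This uses unimodality: choose the fibre order of links so that each fibre, once its pole order starts dropping, is reduced until a single fibre carries all three orders. If some configuration is forced to end with a double line plus a line, or with three concurrent lines, I would instead modify $\phi$ by adding or removing a pair of links. Such an adjustment changes the degree of $\phi$ by at most one, and we still need $f\phi^{-1}$ to have smaller degree. The inequality of \cref{PropDelta} should give this, since each of the extra base points lies over $p_0$'s pencil on a pole fibre and has high multiplicity. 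I expect this degree check, together with the claim that the pole of $\phi_*\omega$ can be made a single triple line, to be the delicate point of the proof.
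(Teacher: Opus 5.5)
\textbf{The degree-lowering step inside $\Jonq(3L)$ is never proved.} Your reduction is sound up to the point you flag yourself, but that point is the whole theorem. \cref{p:simplifying_jonquieres} and \cref{p:base_points_on_poles} give a degree-lowering Jonquières map $\phi$ with $\phi_*\omega$ polar. Nothing in the proposal then produces an element of $\Jonq(3L)$ that still lowers $\deg f$: both routes end with ``one checks combinatorially'' and ``should give this''.

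The supporting computation is also off. For a standard form $2E+\sum m_iF_i$ one has $\sum m_i=2-E^2$, which changes by $\pm1$ at each link (\cref{l:one_link_on_form}), so it is not constantly $3$. Moreover, $E$ need not be the section contracted by $\sigma$: it can end as a $(+1)$-section of $\F_1$. For example, take the quadratic map with base points $p_0,p_1\in L$ and $p_2$ infinitely near $p_1$ off $L$; this gives a double line not through $q$.

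Your conclusion that the target is of type $1$, $2$ or $3$ survives, but route 1 then fails as stated. A linear $\psi$ cannot change the type of the pole divisor, so the correction must be nonlinear, and you have no control on $\deg(f\phi^{-1}\psi^{-1})$. Route 2 has no content yet. Since $f$ is not Jonquières, \cref{l:links_jonquieres} provides no chain of links for $f$, and ``adding or removing a pair of links'' with a \cref{PropDelta} check is not an argument.

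\textbf{What the paper does instead.} The missing idea is to stop using $\deg f$ as the induction parameter and to tolerate pole divisors other than $3L$ at intermediate steps. By \cref{l:jonquieres} and \cref{p:birational_classification}, $f$ factors into volume preserving Jonquières maps $(\P^2,D_i)\rat(\P^2,D_{i+1})$, with each $D_i$ of one of the five types of \cref{f:5_curves}. The paper then inducts on the maximal type $(t,u)$ among the factors. Around each $D_i$ of type $t$ it inserts $\sigma_{t,p}^{-1}\sigma_{t,p}$ for a pointed quadratic map of type $(t,s)$ with $s<t$. Then \cref{p:master} rewrites $\phi_i\circ\sigma_{t,p}^{-1}$ as a product of Jonquières maps of types $(s,s)$ and $(s,u)$. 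The hard cases $t\in\{2,4\}$, where the maximal point of $\phi_i$ lies on the wrong component, need \cref{l:stays_same} and an inner induction on $\deg\phi_i$; this is exactly the work your sketch postpones.

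The statement you aim for, namely that some $\phi\in\Jonq(3L)$ satisfies $\deg(f\phi^{-1})<\deg f$, is in fact true. However, the paper only obtains it a posteriori from \cref{t:normal_form}, after generation is known, so it cannot serve as the engine of the generation proof without a new argument.
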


\begin{figure}[ht]
\begin{tikzcd}
\includecurve{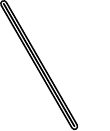}&
\includecurve{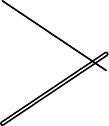}&
\includecurve{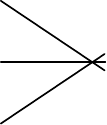}&
\includecurve{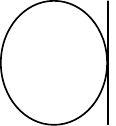}&
\includecurve{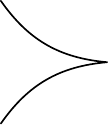}\\
(1)&(2)&(3)&(4)&(5)
\end{tikzcd}
\caption{Five possible anticanonical curves.}
\label{f:5_curves}
\end{figure}

By \cref{p:birational_classification}, there are five possibilities for an anticanonical curve on $\P^2$ birationally equivalent to a triple line.
Given such an anticanonical curve $D$, we denote by $T(D) \in \{1,2,3,4,5\}$ the corresponding case, as indicated on \cref{f:5_curves}.
We say that $T(D)$ is the \emph{type} of $D$.
If $f \colon (\P^2, D) \rat (\P^2, D')$ is a volume preserving map with $t= T(D)$, $t' = T(D')$, we say that $f$ has type $(t,t')$.
We put an order on types by taking the lexicographic order on the pair $( \max\{t,t'\},\min\{t,t'\})$.
For instance $(5,3) > (4,4) > (3,4)$, and $(3,4)$ and $(4,3)$ are equivalent for this order.

\begin{lemma}
\label{l:stays_same}
Let $D$ be an anticanonical curve of type $t \in \{2, 4\}$ and let $p, r, r'$ be points of $D$, proper or infinitely near, chosen as follows:
\begin{enumerate}
\item \label{l:stays_same_4}
If $t = 4$, writing $D = C \cup L$ with $C$ a conic and $L$ a tangent line, we take $p$ to be a point on $L \setminus C$, $r$ to be a point on $C \setminus L$, and $r'$ to be either another point on $C$, or the unique point on $C$ in the first neighbourhood of $r$.
See \cref{f:equal_type_4}.
\item \label{l:stays_same_2}
If $t = 2$, writing $D = L_1 + 2L_2$, we take $p$ to be a point on $L_2 \setminus L_1$, $r$ to be a point on $L_1$, and $r'$ either another point on $L_1$ or the unique point on $L_1$ in the  first neighbourhood of $r$.
See \cref{f:equal_type_2}.
\end{enumerate}
Then, there  exists a volume preserving quadratic map $\tau\colon (\P^2,D)\dasharrow (\P^2,D)$ with base points $p,r,r'$ that sends the pencil of lines through $r$ either onto itself or to the pencil of lines through the singular point of $D$. 
\end{lemma}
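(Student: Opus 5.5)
Since $p,r,r'$ lie on $D$, \cref{p:base_points_on_poles} shows that any quadratic map $\tau$ with base points $p,r,r'$ is volume preserving from $(\P^2,D)$ to $(\P^2,D')$ for some anticanonical curve $D'$. So two things remain: first, that the three points are admissible base points of a quadratic map, meaning not collinear; second, that $D'$ has the same type $t$. Once the type is right, composing with a projective automorphism (volume preserving up to a constant) gives $D'=D$. The pencil condition is then arranged by the choice of this automorphism.

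\textbf{Non-collinearity and the contracted curves.} For $t=4$, a line through $r$ and $r'$ (or the tangent line to $C$ at $r$ when $r'$ is infinitely near) meets $C$ only at $r,r'$. It therefore does not contain $p$, because $p\notin C$. The line $pr$ is not tangent to $C$ at $r$, since the only line tangent to $C$ and passing through $p\in L$ other than $L$ is tangent elsewhere, and in any case $r'$ is not on $pr$ in the infinitely near case. So $p,r,r'$ are in general enough position: a quadratic map exists, with the three lines $pr$, $pr'$, $rr'$ contracted. For $t=2$ the same argument works with $L_1$ in place of $C$: the line through $r,r'$ is $L_1$, which does not contain $p$.

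\textbf{Computing $D'$ (case $t=4$).} I would compute $D'$ with \cref{l:mult_blowup}. The multiplicity of $D$ is $1$ at $p$, $1$ at $r$ and $1$ at $r'$. Hence the exceptional curves $E_p,E_r,E_{r'}$ carry no pole, and $\tilde D$ consists of the strict transforms of $C$ and $L$. After contracting the three lines:
\begin{itemize}
\item the line $rr'$ is a $(-1)$-curve meeting $\tilde C$ but not $\tilde L$;
\item the lines $pr$ and $pr'$ meet $\tilde L$ at $p$, which has been blown up, so they are disjoint from $\tilde L$ there.
\end{itemize}
The images: $\tilde C$ has self-intersection $4-2=2$ and becomes a curve of degree $2(2)-1-1=2$, computed as $2\cdot2-(\text{mult at }r,r')$ minus the contribution from $p$, which is $0$. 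So $C$ maps to a conic. Likewise $\tilde L$ maps to a line, of degree $2-1=1$. Their tangency is preserved, because the tangency point $L\cap C$ is not blown up. Thus $D'$ is again a conic with a tangent line, of type $4$.

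\textbf{Computing $D'$ (case $t=2$).} Here $L_2$ is a double line: $L_1\mapsto$ degree $2-1-1=0$, while $L_2\mapsto$ degree $2-1=1$. The pole of order $1$ on $L_1$ must then reappear on an exceptional curve. But $E_r$ and $E_{r'}$ have coefficient $0$, so I must recheck. The correct bookkeeping is that the line $rr'=L_1$ is contracted, and the new simple line of pole is the image of $E_p$ or of the line $pr$. Indeed, $E_p$ has coefficient $2-1=1$ because $D$ has multiplicity $2$ at $p$. So $E_p$ becomes a simple line and $L_2$ stays double, giving type $2$. This is the step I would verify most carefully. The main obstacle is the exact multiplicity bookkeeping, especially in the infinitely near case $r'\succ r$, where $E_r$ becomes a $(-2)$-curve.

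\textbf{The pencil condition.} The pencil through $r$ is sent to the pencil through the image of the contracted line $pr'$, namely a point $q$. Among the allowed normalising automorphisms of $(\P^2,D)$, I would choose one sending $q$ either to $r$ or to the singular point of $D$ (the point $C\cap L$, respectively $L_1\cap L_2$). To decide which, I would locate $q$ relative to $D'$. If $q$ lies on the smooth part of the curve playing the role of $C$ (respectively $L_1$), I use the fact that $\Aut(\P^2,D)$ acts transitively there to send $q$ to $r$. Otherwise $q$ is the singular point.
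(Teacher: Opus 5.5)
Your framework is the paper's: volume preservation from Proposition~\ref{p:base_points_on_poles}, invariance of the type so that you can normalise to $D'=D$, then locating the centre $q$ of $\tau(\Lambda)$, where $\Lambda$ is the pencil of lines through $r$. But the decisive step is missing. Applying Proposition~\ref{p:base_points_on_poles} to $\tau^{-1}$ only gives $q\in D$. So a priori $q\in C\setminus L$, $q=C\cap L$ or $q\in L\setminus C$ (resp.\ $q\in L_1\setminus L_2$, $q=L_1\cap L_2$ or $q\in L_2\setminus L_1$). Your sentence ``otherwise $q$ is the singular point'' silently discards the last possibility, and ruling it out is the real content of the lemma.

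The paper does this as follows. For $t=4$, $\tau$ restricts to a birational self-map of $C$; lines through $r\in C$ cut out a birational map $C\dashrightarrow\P^1$, whereas lines through a point of $L\setminus C$ cut out a map of degree $2$. For $t=2$, the double line would otherwise be a member of $\tau(\Lambda)$, and the paper checks this against how its preimage meets $\Lambda$.

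Your plan of locating $q$ in the resolution also works if you carry it out. The point $q$ is the image of the contracted curve $\ell$ of class $H-E_p-E_{r'}$: the strict transform of $pr'$, plus the $(-2)$-curve over $r$ when $r'$ is infinitely near $r$. For $t=4$, $\ell$ meets $\tilde C$ but not $\tilde L$, unless $r'=C\cap L$, in which case $\ell=\tilde L$ and $q$ is the singular point.

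Doing this for $t=2$ exposes a configuration where your last step fails. If $r=L_1\cap L_2$ and $r'$ is a proper point of $L_1\setminus L_2$, then among the pole components on $Z$ the curve $\ell$ meets only $E_p$. Hence $q$ lies on $L_1\setminus L_2$, and no automorphism of $(\P^2,D)$ moves it to $r$. Concretely, take $D=\{xz^2=0\}$, $p=[1:0:0]$, $r=[0:1:0]$, $r'=[0:0:1]$. Every admissible $\tau$ is $\beta\circ\tau_0$ with $\tau_0(x,y)=(y/x,y)$ and $\beta\in\Aut(\P^2,D)$, and $\tau_0$ sends the lines $x=c$ to the lines through $[0:0:1]$. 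In this configuration you must exchange $r$ and $r'$. This is harmless in Proposition~\ref{p:master}, where the two points can be chosen. The paper's own case analysis also misses this configuration.

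Two smaller problems remain.

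\emph{Computation of $D'$.} Your computation assumes that $L$ (resp.\ $L_2$) is not contracted, i.e.\ $r'\neq C\cap L$ (resp.\ $r,r'\neq L_1\cap L_2$). The statement allows these configurations, and Proposition~\ref{p:master} uses $r'=C\cap L$. There $L$ (resp.\ $L_2$) passes through two base points and is contracted. So ``tangency is preserved since $C\cap L$ is not blown up'' and ``$L_2\mapsto$ degree $1$'' are false. The line (resp.\ double line) of $D'$ is then the image of $E_{r'}$ with coefficient $2-1$ (resp.\ of $E_r$ or $E_{r'}$ with coefficient $3-1$). For $t=4$ the paper's shortcut avoids all this: $C$ maps to a conic, and by Proposition~\ref{p:birational_classification} a conic plus a line in the Euclidean class must be a conic with a tangent line.

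\emph{Non-collinearity for $t=4$.} Your argument is invalid. A line through $r,r'$ may well contain $p$: take $r'$ to be the second point of $pr\cap C$. Likewise, $r$ may be the contact point of the second tangent to $C$ through $p$. Non-collinearity of $p,r,r'$ has to be taken as an implicit hypothesis; the paper simply takes a quadratic map with these base points. It holds in the application because $p,r,r'$ are base points of a Jonqui\`eres map with maximal point $p$.
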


\begin{figure}[ht]
\centering
\begin{tikzcd}[row sep = 5]
\includecurve{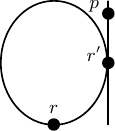} &
\includecurve{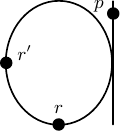} &
\includecurve{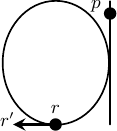}
\end{tikzcd}
\caption{Three cases of \cref{l:stays_same}\ref{l:stays_same_4}.}
\label{f:equal_type_4}
\end{figure}

\begin{figure}[ht]
\centering
\begin{tikzcd}[row sep = 5]
\includecurve{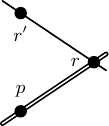} &
\includecurve{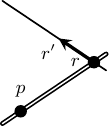} &
\includecurve{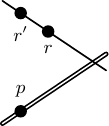} &
\includecurve{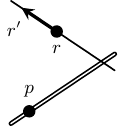}
\end{tikzcd}
\caption{Four cases of \cref{l:stays_same}\ref{l:stays_same_2}.}
\label{f:equal_type_2}
\end{figure}

\begin{proof}
Denote by $\omega$ the form associated to $D$.
We consider a quadratic map $\tau\in \Bir(\P^2)$ whose base points are $p,r,r'$. As all base points of $\tau$ are poles of $\omega$, by \cref{p:base_points_on_poles} we obtain a volume preserving birational map $\tau\colon (\P^2,D)\dasharrow (\P^2,D')$.

We prove first that $T(D')=T(D)$.
In Case~\ref{l:stays_same_4}, this is because the conic $C$ is sent to a conic by $\tau$.
In Case~\ref{l:stays_same_2}, this is because $D'$ contains a double line, either the image of the double line of $D$ (last two cases of Figure~\ref{f:equal_type_2}), or corresponding to the exceptional divisor of $r$ (first case of Figure~\ref{f:equal_type_2}) or $r'$ (second case of Figure~\ref{f:equal_type_2}). 

Since $T(D')=T(D)$, we may replace $\tau$ with $\alpha\tau$ for some $\alpha\in \Aut(\P^2)$ and assume that $D'=D$.
We denote by $\Lambda$ the pencil of lines through $r$. As $r$ is a base point of $\tau$, the pencil $\tau(\Lambda)$ is again a pencil of lines through $\tilde{r}\in \P^2$, that is a base point of $\tau^{-1}$. Applying  \cref{p:base_points_on_poles} to $\tau^{-1}$, the point $\tilde{r}$ belongs to $D$.

(i) If $\tilde{r}$ is the singular point of $D$, we are done.

(ii)  If  $\tilde{r}$ and $r$  belong both to $C\setminus L$ (respectively $L_1\setminus L_2$),  we replace $\tau$ with $\beta\tau$ where $\beta\in \Aut(\P^2)$ preserves $D$ and sends $\tilde{r}$ to $r$, and we are done.

It remains to see either (i) or (ii) always occurs.

If $t=4$, we only need to show that the case $\tilde{r}\in L\setminus C$ leads to a contradiction.  This follows from the fact that in this case the rational map $C\dasharrow \p^1$ induced by the pencils $\Lambda$ and  $\tau(\Lambda)$ is birational in the first case and not in the second.

If $t=2$, we only need to show that the case $\tilde{r}\in L_2\setminus L_1$ leads to a contradiction.
In that case, $L_2$ belongs to the pencil $\tau(\Lambda)$.
In the last two cases of Figure~\ref{f:equal_type_2}, $L_2$ is invariant by $\tau$ but corresponds to a section of $\Lambda$.
In the first case of Figure~\ref{f:equal_type_2}, the preimage of $L_2$ corresponds to the exceptional divisor of $r$, but this one is again a section.
It remains to see that in the second case of Figure~\ref{f:equal_type_2}, the pencil of lines through $r$ is invariant, which follows from the fact that the form has a pole of order $3$ on the exceptional divisor of~$r$.
\end{proof}

\begin{definition}
\label{d:pointed_quad_map}
For each $t \in \{2,3,4,5\}$, we choose a family of quadratic maps $\sigma_{t,p}$ of type $(t,s)$ where $t > s$, and where the point $p$ used as index is one of the three base points, together with $p_0$ the singular point, and $p_1$ is the first neighbourhood of $p_0$.
Precisely:
\begin{itemize}
\item If $t = 5$, then $p_1$ corresponds to the direction of the cusp, and $p$ is any smooth point.
The corresponding $\sigma_{5,p}$ has type $(5,4)$.
\item If $t = 4$, then $p_1$ corresponds to the direction of the line, and $p$ is any smooth point on the conic.
The corresponding $\sigma_{4,p}$ has type $(4,2)$.
\item If $t = 3$, then $p_1$ corresponds to the direction of one of the three lines, and $p$ is any smooth point on the remaining two lines.
The corresponding $\sigma_{3,p}$ has type $(3,2)$.
\item If $t = 2$, then $p_1$ corresponds to the direction of the double line, and $p$ is any smooth point on the other line.
The corresponding $\sigma_{2,p}$ has type $(2,1)$.
\end{itemize}
We call any such map $\sigma_{t,p}$ a \emph{pointed quadratic map}.
See \cref{f:sigma_t}.
\end{definition}

\begin{figure}[t]
\centering
\begin{tikzcd}
\includecurve{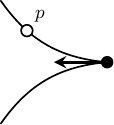} \ar[r,"\sigma_{5,p}"] &
\includecurve{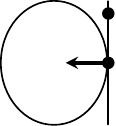} &
\includecurve{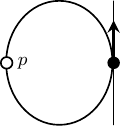} \ar[r,"\sigma_{4,p}"]&
\includecurve{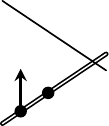}
\\
\includecurve{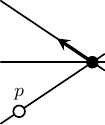} \ar[r,"\sigma_{3,p}"]&
\includecurve{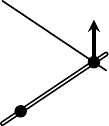} &
\includecurve{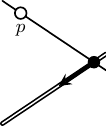} \ar[r,"\sigma_{2,p}"]&
\includecurve{1_triple_d_1}
\end{tikzcd}
\caption{Pointed quadratic maps $\sigma_{t,p}$, with mobile points $p$ indicated by $\boldsymbol{\circ}$.}
\label{f:sigma_t}
\end{figure}

\begin{remark}
Given a configuration $C + L$ of type $4$ and a point $p \in L \setminus C$, there is no quadratic map with base point at $p$ that is volume preserving of type $(4,m)$, with $m < 4$.
Indeed, the image of the conic $C$ would be a curve of degree $\ge 2$ if not all base points are on $C$.

Similarly, given a configuration $L_1 + 2L_2$ of type $2$ and a point $p \in L_2 \setminus L_1$, there is no quadratic map with base point at $p$ that is volume preserving of type $(2,1)$.
This can be proven by a case-by-case analysis.

This explains why the proof of \cref{p:master} below is a little more involved when $t \in \{2,4\}$.
\end{remark}

\begin{proposition}
\label{p:master}
Let $f$ be a volume preserving map of the form $\phi \circ \sigma_{t,p}^{-1}$, where $\sigma_{t,p}$ is a pointed quadratic map of type $(t,s)$ with $t > s$, and $\phi $ is a Jonquières map of type $(t, u)$, with $t \ge u$.
Then $f$ is a composition of Jonquières maps of types $(s,s)$ and $(s,u)$.
\end{proposition}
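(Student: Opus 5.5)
We argue by induction on the degree of $\phi$. The idea is to push the base points of $\sigma_{t,p}$ through $\phi$ and use the two-point structure of Jonquières maps.

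\emph{Normalisation.} Write $D$ for the anticanonical curve of type $t$ on the source of $\phi$ and of $\sigma_{t,p}$, and $D'$ for its image of type $s$ under $\sigma_{t,p}$. The base points of $\sigma_{t,p}$ are $p_0$ (the singular point of $D$), $p_1$ (in its first neighbourhood, in the distinguished direction) and the mobile point $p$. The base points of $\phi$ lie on $D$ by \cref{p:base_points_on_poles}.

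\emph{Easy case: the maximal base point of $\phi$ is $p_0$.} Then $\phi$ preserves the pencil through $p_0$. So does $\sigma_{t,p}$, up to the choice of maximal point: for $d=2$, any proper base point may be taken as maximal, and here $p_0$ is proper. Hence $\sigma_{t,p}^{-1}$ sends some pencil through a point $q_0\in D'$ onto the pencil through $p_0$. Therefore $f=\phi\sigma_{t,p}^{-1}$ sends the pencil through $q_0$ to a pencil, so $f$ itself is Jonquières. Its type is $(s,u)$, and we are done.

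\emph{General case: the maximal base point $m$ of $\phi$ is a smooth point of $D$.} For $\deg\phi=1$ the map $f$ is quadratic, hence Jonquières of type $(s,u)$. Otherwise we first replace $\phi$ by a composition of maps with maximal base point $p_0$. We choose a quadratic map $\tau$ with base points $p_0,p_1$ and one further point; when possible, this point is the base point $m$ of $\phi$. Then $\tau$ is a pointed quadratic map $\sigma_{t,m}$, provided $m$ lies on the allowed component.
\begin{itemize}
\item For $t\in\{3,5\}$, every smooth point of $D$ is allowed, except for $t=3$ on the line with direction $p_1$. In that subcase we choose $p_1$ along a different line.
\item For $t\in\{2,4\}$ with $m$ on the bad component ($L_2\setminus L_1$, resp. $L\setminus C$), we first precompose by the map $\tau$ of \cref{l:stays_same}, with $p=m$ and $r,r'$ chosen among the simple base points of $\phi$ on the good component. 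This step is of type $(t,t)$. It moves the maximal point to $r$ or to the singular point, and it lowers the degree of $\phi$ by \cref{PropDelta}.
\end{itemize}
After this step we may assume $m$ is allowed. Then $\sigma_{t,p}\sigma_{t,m}^{-1}$ is a quadratic map of type $(s,s)$, hence Jonquières. It remains to treat $\phi\sigma_{t,m}^{-1}$. Applying \cref{PropDelta} with the Jonquières structure centred at $m$, we can arrange that $\phi\sigma_{t,m}^{-1}$ has degree at most $\deg\phi$ and is Jonquières with maximal point sent appropriately, namely in the pencil through $m$. Induction on $\deg\phi$, combined with the easy case, then writes it as a composition of maps of types $(s,s)$ and $(s,u)$.

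\emph{Main obstacle.} The crux is the bookkeeping in the cases $t\in\{2,4\}$. We must check that the auxiliary map from \cref{l:stays_same} genuinely decreases $\deg\phi$, or at least does not increase it. The required inequality $m_{p}+m_{r}+m_{r'}>\deg$ from \cref{PropDelta} has to be verified using the unimodality in \cref{p:unimodal}, which controls where simple base points of $\phi$ can sit on $D$. We must also check that every intermediate curve keeps type $s$ or $t$, so that types never exceed those claimed. We would first try to handle this via the link factorisation of \cref{l:links_jonquieres}, reordering the links so that those centred on the good component come first.
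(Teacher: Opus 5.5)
Your easy cases are fine: maximal point $p_0$, and maximal point $m$ an allowed smooth point. In the latter case $\phi\circ\sigma_{t,m}^{-1}$ is already Jonquières of type $(s,u)$, because both maps send the pencil through $m$ to a pencil. No degree bound or induction is needed there (and the bound you state is false in general).

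The gap is in the cases $t\in\{2,4\}$ with $m$ on the bad component. You claim that passing to $\phi'=\phi\circ\tau^{-1}$ moves the maximal point to $r$ or to the singular point. This misreads \cref{l:stays_same}: its pencil clause concerns the pencil through $r$, not the one through $m$. The maximal point of $\phi'$ is the point onto which $\tau$ contracts the line through $r$ and $r'$, and it lies again on the bad component. For instance, for $t=4$ and $r,r'\in C\setminus L$, that line meets $L$ outside the base points, so its image lies on $\tau(L)=L$; its strict transform is disjoint from that of $C$, so the image is not on $C$. Hence ``after this step we may assume $m$ is allowed'' fails. The only gain is $\deg\phi'=\deg\phi-1$, which is immediate from $(d-1)+1+1>d$ and needs no unimodality.

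More seriously, even granting your claim, the step leaves you with $f=\phi'\circ\tau\circ\sigma_{t,p}^{-1}$, where $\tau$ has type $(t,t)$. Such a factor is not allowed in the conclusion, and your plan never removes it. The missing idea is to conjugate $\tau$ by a pointed map:
\[
\phi\circ\sigma_{t,p}^{-1}=(\phi'\circ\sigma_{t,r}^{-1})\circ(\sigma_{t,r}\circ\tau\circ\sigma_{t,r}^{-1})\circ(\sigma_{t,r}\circ\sigma_{t,p}^{-1}).
\]
The middle factor is Jonquières of type $(s,s)$ exactly because $\tau$ sends the pencil through $r$ onto itself or onto the pencil through $p_0$, and both $r$ and $p_0$ are base points of $\sigma_{t,r}$. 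When $r=p_0$ (possible for $t=2$), conjugate by $\sigma_{t,p}$ instead. This is the sole purpose of the pencil clause in \cref{l:stays_same}. The last factor has type $(s,s)$, and the first has the same shape as $f$ with $\deg\phi'<\deg\phi$. So you need an induction on $\deg\phi$ for the full statement, with an arbitrary pointed map, not a one-step reduction to an allowed point.

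What does need checking is the existence of $r,r'$, not an inequality from \cref{p:unimodal}. Since $\phi(C)$ has degree at most $2$ (resp. $\phi(L_1)$ at most $1$), the curve $C$ (resp. $L_1$) carries at least two base points of $\phi$. For $t=4$, the line $L$ carries at most one base point besides $m$. This lets you take $r$ proper on the good component and $r'$ proper or in its first neighbourhood. Note also that intermediate curves of type $t$ are not permitted: every factor must start from the type-$s$ curve.
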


\begin{proof}
The singular point $p_0$ of the type $t$ anticanonical curve is a base point of $\sigma_{t,p}$. Hence, if $p_0$ is a maximal base point of $\phi$, then $\phi \circ \sigma_{t,p}^{-1}$ is a Jonquières map of type $(s,u)$, and we are done.
Indeed, denoting by $\Pl$ the pencil of lines through $p_0$, by assumption both $\sigma_{t,p}(\Pl)$ and $\phi(\Pl)$ are also pencil of lines, and $\phi \circ \sigma_{t,p}^{-1}$ is Jonquières because its sends $\sigma_{t,p}(\Pl)$ onto $\phi(\Pl)$.
In the sequel we prove similarly that various compositions of maps are Jonquières, without detailing the involved pencils.

If $\phi$ has a maximal point $q$ that corresponds to the mobile point of some pointed quadratic map $\sigma_{t,q}$, then we write
\[
\phi \circ \sigma_{t,p}^{-1}
= (\phi \circ \sigma_{t,q}^{-1}) \circ (\sigma_{t,q} \circ \sigma_{t,p}^{-1}).
\]
This is a composition of two Jonquières maps of respective types $(s,u)$ and $(s,s)$, so we are done also in this case.
Observe that if $t \in \{3, 5 \}$, then we already covered all possible cases, and the proof is finished.
Similarly, if $\deg(\phi ) = 2$, then any proper base point of $\phi $ is a maximal point, and again we can apply the above argument.
So we can assume $\deg(\phi ) \ge 3$, and that the proposition is proven for any Jonquières map of degree strictly less than $\phi $.
Denoting by $p$ the unique maximal base point of $\phi $, we are left with the following two configurations:
\begin{itemize}
\item $t = 4$, and writing $C + L$ the configuration of type $4$, $p \in L \setminus C$
\item $t = 2$, and writing $L_1 + 2L_2$ the configuration of type $2$, $p \in L_2 \setminus L_1$.
\end{itemize}
In each case, we define two other base points $r,r'$ of $\phi$ as follows, and apply \cref{l:stays_same} to obtain a volume preserving quadratic map $\tau\colon (\P^2,D)\dasharrow (\P^2,D)$  with base points $p,r,r'$.
Moreover, $\tau$ sends the pencil of lines through $r$ either onto itself or to the pencil of lines through the singular point of $D$.

In the case $t = 4$, since $\phi(C)$ is either a point, a line or a conic, there are at least two base points $r,r'$ of $\phi$ on $C$.
Moreover, there are at most two base points of $\phi$ on  $L$, so we can assume that $r$ is a proper point on $C \setminus L$, and $r'$ either in the first neighbourhood of $r$ or is a proper point on $C$ (possibly $C \cap L$).
This corresponds to Case~\ref{l:stays_same_4} of \cref{l:stays_same}.

Similarly, in the case $t = 2$, since $\phi(L_1)$ is a line or a point, the line $L_1$ passes through at least two base points $r,r'$ of $\phi$.
Moreover, we can choose $r$ to be a proper point, and $r'$ to be proper or in the first neighbourhood of $r$.
This corresponds to Case~\ref{l:stays_same_2} of \cref{l:stays_same}.

Then in both cases $\phi' = \phi \circ \tau^{-1}$ is a Jonquières map of type $(t,u)$, with $\deg(\phi') = \deg(\phi) - 1$ (\cref{PropDelta}).

If $r$ is the singular point of the anticanonical curve, then $\tau$ preserves the pencil of lines through $r$. Hence, $\sigma_{t,p} \circ \tau \circ \sigma_{t,p}^{-1}$ is a Jonquières map of type $(s,s)$, and writing
\[
\phi \circ \sigma_{t,p}^{-1} = \phi' \circ \tau \circ \sigma_{t,p}^{-1} = (\phi' \circ \sigma_{t,p}^{-1}) \circ (\sigma_{t,p} \circ \tau \circ \sigma_{t,p}^{-1}),
\]
we conclude by the induction hypothesis.
Otherwise, $r$ is general point of the conic ($t=4$) or the simple line ($t=2$), hence can be taken as the mobile point of a pointed quadratic map $\sigma_{t,r}$.
As $\tau$ sends the pencil of lines through $r$ either onto itself or to the pencil of lines through the singular point of $D$, and as both $r$ and the singular point of $D$ are base points of the quadratic map $\sigma_{t,r}$, the map $\sigma_{t,r} \circ \tau \circ \sigma_{t,r}^{-1}$ is Jonqui\`eres of type $(s,s)$.
We write
\[
\phi \circ \sigma_{t,p}^{-1} = \phi' \circ \tau \circ \sigma_{t,p}^{-1} = (\phi' \circ \sigma_{t,r}^{-1}) \circ (\sigma_{t,r} \circ \tau \circ \sigma_{t,r}^{-1}) \circ (\sigma_{t,r} \circ \sigma_{t,q}^{-1}).
\]
The last two factors are Jonquières of type $(s,s)$, and again we conclude by the induction hypothesis.
\end{proof}

\begin{proof}[Proof of \cref{t:generators}]
Let $f \in \Bir(\P^2,3L)$.
By \cref{l:jonquieres}, there exists a factorisation $f = \phi_n \circ \dots \circ \phi_1$ where each $\phi_i\colon (\P^2,D_i) \rat (\P^2,D_{i+1})$ is a volume preserving Jonquières map, for some anticanonical curves $D_1 = 3L, D_2 \dots, D_n, D_{n+1} = 3L$.
By \cref{p:birational_classification}, each $D_i$ is one of the five curves on \cref{f:5_curves}.
We denote by $t_i \in \{1,2,3,4,5\}$ the type of the curve $D_i$, so that each $\phi_i$ in the factorisation is a Jonquières map of type $(t_i, t_{i+1})$.

If all $\phi_i$ have type $(1,1)$, we are done.
Otherwise, consider $t \ge u$ realising the maximal type $(t,u)$ or $(u,t)$ among all $\phi_i$.
For each $D_i$ of type $t$, we insert $\sigma_i^{-1} \circ \sigma_i$ in the factorisation, with $\sigma_i = \sigma_{t,p_i}$ a pointed quadratic map from $(\P^2, D_{i})$.
Then by applying \cref{p:master} to each such $\phi_i \circ \sigma_i^{-1}$ (or to the inverse of $\sigma_i \circ \phi_i$ when $\phi_i$ has type $(u,t)$), we obtain a new factorisation with strictly lower maximal type.
We conclude by induction.
\end{proof}

\section{Jonquières maps of \texorpdfstring{$\Bir(\P^2,3L)$}{Bir(P2,3L)}}
\label{s:volume preserving-jonquieres-maps}

In this section we discuss the possible configurations of base points for elements of the set $\Jonq(3L)$, which was defined in Section \ref{sec:reformulation}.
Recall that we fix a choice of line $L=\{z=0\} \subset \P^2$, and fix an embedding $\mathbb{A}^2 \hookrightarrow \P^2, (x,y)\mapsto \pcoor{x:y:1}$ that gives an isomorphism $\A^2 \iso \P^2 \setminus L$.

\subsection{The group \textit{B} in affine coordinates}
We give the explicit form in affine coordinates for the group $B$ of \cref{t:main}:

\begin{lemma}
\label{l:jonq_preserving}
The subgroup $B \subset \Bir(\P^2,3L)$ of volume preserving Jonquières maps preserving the pencil of lines through $\pcoor{1:0:0}$ consists of maps of the form
\[
\left( \frac{\lambda}{M'(y)}x + F(y), M(y)\right),
\]
where $F(y) \in \k(y)$, $\lambda \in \k^*$, and $M$ is a Möbius map that we identify with an element in $\PGL_2(\k)$.
\end{lemma}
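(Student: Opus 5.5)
We prove both inclusions with a direct computation in the affine chart $\A^2=\P^2\setminus L$, where $\omega=dx\wedge dy$. Lines through $\pcoor{1:0:0}$ are the lines $y=c$ together with $L$.

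First we describe the Jonquières maps preserving this pencil. If $f\in\Bir(\P^2)$ sends the pencil to itself, then $f$ induces an automorphism of the base $\P^1$ of the pencil, so the second coordinate of $f$ is $M(y)$ with $M\in\PGL_2(\k)$. Moreover, $f$ restricts to a birational map from the general line $y=c$ onto the line $y=M(c)$. A birational map $\P^1\rat\P^1$ over the function field $\k(y)$ is a Möbius map with coefficients in $\k(y)$. Hence
\[
f=\Bigl(\frac{a(y)x+b(y)}{c(y)x+e(y)},\,M(y)\Bigr),\qquad a,b,c,e\in\k(y),\quad ae-bc\neq0.
\]

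Next we impose volume preservation. We compute
\[
f^*(dx\wedge dy)=\frac{\partial f_1}{\partial x}\,M'(y)\,dx\wedge dy
=\frac{(ae-bc)(y)}{(c(y)x+e(y))^2}\,M'(y)\,dx\wedge dy.
\]
This must equal $\lambda\,dx\wedge dy$ with $\lambda\in\k^*$. The coefficient is then independent of $x$, which forces $c=0$. Normalising $e=1$, the condition becomes $a(y)M'(y)=\lambda$, so $a=\lambda/M'$ and $b=F(y)\in\k(y)$ is arbitrary. This yields exactly the stated form. Note that $M'(y)$ is nonzero as a rational function, since $M$ is nonconstant; this holds in every characteristic, because $M'=(\alpha\delta-\beta\gamma)/(\gamma y+\delta)^2$.

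For the converse, any map $\bigl(\frac{\lambda}{M'(y)}x+F(y),M(y)\bigr)$ is birational, with inverse $\bigl(\frac{M'(M^{-1}(y))}{\lambda}\bigl(x-F(M^{-1}(y))\bigr),\,M^{-1}(y)\bigr)$. It preserves the pencil, and it satisfies $f^*\omega=\lambda\omega$ by the same computation. Since $f^*\omega=\lambda\omega$, the divisor of poles $3L$ is preserved, so $f\in\Bir(\P^2,3L)$.

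It remains to check that such a map is Jonquières in the sense of the paper's definition, that is, that its degree-$d$ homaloidal system has a point of multiplicity $d-1$. This is classical: a birational map preserving a pencil of lines through a point $p$ has its homaloidal system meeting a general line through $p$ in exactly one free point, so $p$ has multiplicity $d-1$. This step is the only mildly non-computational one. The main point in the whole argument is the reduction $c=0$, and it follows directly from the $x$-independence of the Jacobian.
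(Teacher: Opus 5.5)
Your proof is correct and follows the paper's argument: write the map as a Möbius map in $x$ with coefficients in $\k(y)$ paired with $M(y)$, compute the Jacobian $\frac{(ae-bc)M'(y)}{(cx+e)^2}$, and deduce $c=0$. The only differences are that the paper first factors off the volume preserving map $\bigl(x/M'(y),M(y)\bigr)$ to reduce to the case $M=\id$, whereas you keep $M$ in the computation, and that you spell out the converse inclusion and the Jonquières property, which the paper leaves implicit.
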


\begin{proof}
For any Möbius map $M(y)=\frac{a y + b}{c y + d}$ with $\smallmat{a & b\\c & d} \in \PGL_2(\k)$, one checks that
\[
\phi = \left( \frac{x}{M'(y)} , M(y)\right)\in \Bir(\P^2)
\]
satisfies $\phi^* \omega = \omega$, where $\omega = dx \wedge dy$.
So it is sufficient to characterise volume preserving Jonquières maps of the form
\[
\psi = \left( \frac{\alpha x + \beta}{\gamma x + \delta}, y \right) \text{ with } \smallmat{\alpha & \beta\\\gamma & \delta} \in \PGL_2(\k(y)).
\]
We compute
\[
\psi^* \omega= \frac{\alpha \delta - \beta \gamma}{(\gamma x +\delta)^2}\, \omega.
\]
Hence, $\psi^* \omega =\lambda. \omega$, with $\lambda\in\k^*$, if and only if $\frac{\alpha \delta - \beta \gamma}{(\gamma x +\delta)^2}=\lambda$.
This is only possible if $\gamma=0$.
In that case, we can choose $\delta=1$, and we obtain the expected form with $\alpha=\lambda$, $\beta(y) = F(y)$.
\end{proof}

\begin{remark}
Explicitly, if $M(y) = \frac{ay +b}{cy+d}$, then $M'(y) = \frac{ad - bc}{(cy+d)^2}$.
\end{remark}

\begin{proposition}\label{prop:abelianisation-trivial}
Recall that $\omega=dx\wedge dy$ satisfies $\div(\omega)=-3L$. The abelianisation morphism of $\Bir(\P^2, 3L)$ is the Jacobian map
 \[\mathrm{Jac}\colon \Bir(\P^2, 3L)\to \k^*,\]
 that sends $f\in\Bir(\P^2, 3L)$ onto $\lambda\in \k^*$, where $f^*(\omega)=\lambda \omega$.
\end{proposition}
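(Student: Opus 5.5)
The plan is to show that $\mathrm{Jac}$ is a well-defined surjective homomorphism whose kernel is exactly the commutator subgroup. Well-definedness and multiplicativity are immediate: $f^*\omega = \lambda\omega$ with $\lambda \in \k^*$ by definition of volume preserving, and $(f\circ g)^*\omega = g^*f^*\omega = \mathrm{Jac}(f)\mathrm{Jac}(g)\,\omega$. Surjectivity comes from the diagonal maps $(\lambda x, y) \in A$. Since $\k^*$ is abelian, $\mathrm{Jac}$ factors through the abelianisation. It therefore remains to prove that $\ker(\mathrm{Jac})$ is contained in the commutator subgroup $[G,G]$, where $G = \Bir(\P^2,3L)$.

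By \cref{t:generators}, $G$ is generated by $A$ and $B$. Write $\bar G = G/[G,G]$. I will show that every element of $A$ or $B$ has the same image in $\bar G$ as some diagonal map $\delta_\lambda = (\lambda x, y)$. Then every $g \in G$ has image equal to that of a product of such $\delta$'s, which is $\delta_{\mathrm{Jac}(g)}$. Hence $\mathrm{Jac}(g) = 1$ forces $g \in [G,G]$.

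\textbf{Translations are commutators.} For $(x+a, y+b)$, conjugating by the homotheties $(\mu x, \mu y)$ multiplies the translation vector by $\mu$. So $t_v^{-1}\, h_\mu t_v h_\mu^{-1} = t_{(\mu-1)v}$, and since $\k$ is algebraically closed (hence infinite) we may pick $\mu \ne 0,1$. Thus all translations lie in $[G,G]$.

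\textbf{The affine part.} $\SL_2(\k)$ is perfect for $|\k| > 3$, which holds here, so $\SL_2(\k) \subset [G,G]$. Consequently every element of $\GL_2(\k)$ is congruent to $\delta_{\det}$ modulo $[G,G]$, and every element of $A = \k^2 \rtimes \GL_2(\k)$ is congruent to $\delta_{\mathrm{Jac}}$.

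\textbf{The group $B$.} By \cref{l:jonq_preserving}, an element of $B$ has the form
\[
\left(\tfrac{\lambda}{M'(y)}x + F(y),\, M(y)\right).
\]
It decomposes as the product of three maps:
\begin{itemize}
\item $(x + F(y), y)$;
\item $\left(\tfrac{x}{M'(y)}, M(y)\right)$;
\item $\delta_\lambda$.
\end{itemize}
The maps $\phi_M = (x/M'(y), M(y))$ form a subgroup isomorphic to $\PGL_2(\k)$: the chain rule gives $\phi_M\phi_N = \phi_{M\circ N}$. Since $\PGL_2(\k) = \mathrm{PSL}_2(\k)$ is perfect for $\k$ algebraically closed, this subgroup lies in $[G,G]$.

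\textbf{The main obstacle: the maps $(x+F(y), y)$.} These form an abelian group $\k(y)$, so I would produce them as commutators inside $B$. Conjugation by $\delta_\mu = (\mu x, y)$ sends $(x+F, y)$ to $(x+\mu F, y)$. Hence
\[
(x+F,y)^{-1}\,\delta_\mu (x+F,y)\,\delta_\mu^{-1} = \bigl(x + (\mu-1)F,\, y\bigr),
\]
and with $\mu \ne 0,1$ this gives every $F \in \k(y)$. So all maps $(x+F(y),y)$ lie in $[G,G]$.

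Combining the three steps, every generator is congruent to $\delta_{\mathrm{Jac}}$ modulo $[G,G]$, and the kernel of $\mathrm{Jac}$ is $[G,G]$. This proves the proposition.
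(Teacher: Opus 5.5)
Your proof is correct. Its skeleton matches the paper's: generation by $A$ and $B$ via \cref{t:generators}, the description of $B$ from \cref{l:jonq_preserving}, and perfectness (simplicity, in the paper) of $\PGL_2(\k)$ for the Möbius part. You handle the other two pieces differently.

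\textbf{The group $H=\{(x+F(y),y)\}$.} The paper takes commutators of $(x+\mu y^i,y)$ with $(x,ay)$, which rescale each monomial by $a^i-1$. This only reaches $\k[y,\tfrac1y]$. The paper then needs conjugation by the translations $(x,y+c)$ and partial fraction decomposition to obtain all of $\k(y)$. Your commutator with $\delta_\mu=(\mu x,y)$ rescales $F$ uniformly by $\mu-1$, so it gives every $F\in\k(y)$ in one step. This is the more economical argument.

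\textbf{The affine part.} The paper obtains $\k^2\rtimes\SL_2(\k)$ from the shears $(x+cy+d,y)\in H$ and their conjugates by $(y,x)$, which is self-contained. You use homotheties for the translations and quote perfectness of $\SL_2(\k)$, which is equally standard. Your translation step is in fact redundant once $H\subseteq[G,G]$ is known.

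\textbf{Implicit steps.} You also make explicit two points the paper leaves tacit. First, that $\mathrm{Jac}$ is a surjective homomorphism. Second, why it suffices to treat the Jacobian-one parts of the generators: every element of $A\cup B$ is congruent to $\delta_{\mathrm{Jac}}$ modulo $[G,G]$.

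One minor imprecision: in your three-factor decomposition of an element of $B$, the function in the factor $(x+\,\cdot\,,y)$ is not literally $F$. Depending on the order of composition it is $F\circ M^{-1}$ or $F\cdot M'$. This is harmless, since you show all of $H$ lies in $[G,G]$.
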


\begin{proof}
By \cref{t:generators}, $\Bir(\P^2, 3L)$ is generated by the subgroup $A$ of affine maps and by the subgroup $B$ of Jonquières maps preserving the pencil through $[0:1:0]$.
By \cref{l:jonq_preserving}, $B$ consists of maps of the form
 \[
 (x,y) \mapsto \left( \frac{\lambda x}{M' (y)} + F(y), M(y) \right),
 \]
 with $M(y) \in \PGL_2 (\k)$ a Möbius transformation, and $F(y) \in \k(y)$ a rational function.
 
 We need to prove that every element of $A$ or $B$ of Jacobian $1$ is in the commutator subgroup $N\subseteq \Bir(\P^2, 3L)$.
 
 First, the commutator of $f=(x+by,y)$ and $g=(x,y+1)$ is $[f,g]=f^{-1}g^{-1}fg=(x+b,y)\in N$, for each $b\in \k$.
 We then consider $\mu\in \k^*$, $i\in \Z\setminus \{0\}$ and $a\in \k^*$, with $a^i\not=1$. The commutator of $f=(x+\mu y^i,y)$ and $g=(x,ay)$ is $[f,g]=f^{-1}g^{-1}fg=(x+\mu (a^i-1)y^i,y).$ 
 This proves that $(x+F(y),y)\in N$ for each $R\in \k[y,\frac{1}{y}]$. 
 
 In particular, we obtain $(x+cy+d,y)\in N$ for all $c,d\in \k$. Conjugating by $(y,x)$ we also obtain $(x,y+cx+d)\in N$. 
 These elements generate $\k^2 \rtimes \SL_2(\k)=\{\alpha\in A\mid \mathrm{Jac}(\alpha)=1\}$, so we obtain $\k^2 \rtimes \SL_2(\k)\subseteq N$.

 As $\PGL_2(\k)$ is a simple group, the subgroup $\{(\frac{x}{M'(y)}, M(y))\mid M\in \PGL_2(\k)\}$ of $B$ is contained in $N$. 
 
It remains to show that the group $H=\{(x+F(y),y)\mid F\in \k(y)\}\simeq (\k(y),+)$, is contained in $N$. We have already proven that $\k[y,\frac{1}{y}]\subseteq N\cap H\subseteq H$. Conjugating $H$ with $(x,y+c)$ sends $F\in \k(y)$ onto $F(y+c)$. Hence, $\k[y,\frac{1}{y+c}]\subseteq N\cap H$ for each $c\in \k$. This yields $N\cap H=\k(y)$ because of partial fraction decompositions for rational functions.
\end{proof}

\subsection{Base points of Jonquières maps}

\begin{proposition}
\label{Prop:BasePtsJonq}
Let $\phi\in \Jonq(3L)$ be a Jonquières map of degree $d\ge 2$, with maximal base point $p\in L$. Then, either $p$ is the unique proper base point, or $\phi$ has exactly two proper base points $p,t\in L$.
The $2d-1$ base points of $\phi$ are in one of the three configurations depicted on \cref{f:BasePtsJonq}, with the following conditions:
\begin{itemize}
\item $m\ge 1$ in Cases~\ref{BasePtsJonq1} and \ref{BasePtsJonq2}, and $m\ge 2$ in Case~\ref{BasePtsJonq3}.
\item $d_1,\dots,d_m\ge 1$, and $d_1\ge 4$ in Cases~\ref{BasePtsJonq2} and~\ref{BasePtsJonq3}, $d_2\ge 2$ in Case~\ref{BasePtsJonq3}.
\item $\sum_{i=1}^m (d_i-1)=d-1$.
\item For each $i = 1, \dots, m$, the points $p_{i,1},\dots,p_{i,d_i-1}$ are proximate to $p$, and for each $j \ge d_i$, the point $p_{i,j}$ is only proximate to $p_{i,j-1}$.
\item In Case \ref{BasePtsJonq2}, $t_2$ is not proximate to $t$.
\item In Case \ref{BasePtsJonq3}, the point $p_{2,1}$ belongs to the strict transform of $L$.
\item No three base points of $\phi$ are collinear, except maybe $t,t_1,t_2$ in Case~\ref{BasePtsJonq2}.
\end{itemize}

Conversely, for each configuration of points as above, there exists a Jonquières map $\varphi\in \Jonq(3L)$ of degree $d$ with these base points, where $p$ has multiplicity $d-1$, and all the other points have multiplicity $1$.
\end{proposition}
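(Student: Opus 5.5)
The plan is to read off the base points of $\phi$ from the decomposition into links of \cref{l:links_jonquieres}, and to control which links are allowed using the pole orders of the transformed form, via \cref{p:unimodal} and \cref{l:one_link_on_form}. I have not seen \cref{f:BasePtsJonq}, so the matching of the three configurations to the fibre-by-fibre description below is my guess at its content.

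\textbf{Setup.} Write $\phi=\sigma\chi_{2d-2}\cdots\chi_1\eta^{-1}$ with $\eta$ the blowup of $p$. By \cref{p:unimodal}, every intermediate form $\omega_i$ is standard with respect to the strict transform of $E$. On $X_1=\F_1$ it has divisor $-2E-3\tilde L$, where $\tilde L$ is a fibre. Every other fibre $F$ then starts with $\ord(F)=0$, and it must also end with order $0$, because the last surface again carries $-2E-3\tilde L$.

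\textbf{Fibres other than $\tilde L$.} By \cref{p:base_points_on_poles}, the centre of each link lies on a pole. On a fibre $F\neq\tilde L$ with order $0$, the only pole point is therefore $E\cap F$. A link there raises the pole order of $F$ by one (\cref{l:one_link_on_form}). By the unimodality of \cref{p:unimodal}\ref{p:unimodal:3}, the sequence of pole orders on $F$ must climb $k$ times along $E$ and then descend $k$ times off $E$, returning to $0$. In terms of points of $\P^2$, this gives a chain of points infinitely near $p$: the first ones $p_{i,1},\dots$ lie on $E$, hence are proximate to $p$, and the later ones lie off $E$, hence are proximate only to their predecessor. This is exactly the shape of the chains $p_{i,j}$, and it contributes $2k$ links.

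\textbf{The fibre $\tilde L$.} On $\tilde L$ the pole order starts and ends at $3$, and the sequence is unimodal. Either all its links also follow the pattern up along $E$ then down off $E$, in which case $p$ is the only proper base point; or some links are centred on $\tilde L\setminus E$, that is at a proper point $t\in L$, or at points on the strict transform of $L$. Counting links on each fibre (up-steps equal down-steps) gives $\sum_i(d_i-1)=d-1$ and the total $2d-2$. The splitting of $\tilde L$ into the three configurations, and the numerical conditions $d_1\ge4$ and $d_2\ge2$, should come from requiring that the pole orders stay nonnegative and return to $3$. For instance, descending below order $3$ at $t$, or at a point on the strict transform of $L$, forces enough prior ascents along $E$, and I expect this to produce $d_1\ge 4$.

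\textbf{The main obstacle.} I expect the bookkeeping on $\tilde L$ to be the hardest part. The links on this fibre may be interleaved in several orders, and one must match each admissible pole-order sequence with a proximity configuration. In particular this is where the conditions that $t_2$ is not proximate to $t$, and that $p_{2,1}$ lies on $\tilde L$, must be extracted. I would first treat each fibre independently, since links on distinct fibres commute, and then enumerate the possible sequences on $\tilde L$.

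\textbf{Non-collinearity.} This follows from Bézout. A line through $p$ and two further base points would meet a general member of the homaloidal system with multiplicity at least $(d-1)+1+1>d$. A line through three simple base points $q_1,q_2,q_3$ (none equal to $p$, excluding $t,t_1,t_2$) would contradict the homaloidal constraints. I would rule this out by composing with a quadratic map and applying \cref{PropDelta}, or directly via \cref{TwoptsHom}.

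\textbf{Converse.} Starting from a configuration satisfying the listed conditions, blow up $p$ and perform the links centred at the prescribed points, in the order dictated by the chains. By \cref{l:one_link_on_form}, each intermediate form is standard, since every centre lies on a pole. Because the ascents and descents balance on each fibre, the final surface is $\F_1$ with form $-2E-3F$. Contracting $E$ then gives a map to $(\P^2,3L)$, which is volume preserving and Jonquières. The non-collinearity hypothesis guarantees that the resulting linear system has no fixed components. Hence the map has exactly degree $d$, with $p$ of multiplicity $d-1$ and all other base points simple.
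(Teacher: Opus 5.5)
Your proof has a genuine gap in the setup. You claim that every fibre other than $\tilde L$ ends with order $0$ ``because the last surface again carries $-2E-3\tilde L$''. This is false in general. On $X_{2d-1}$ the triple pole lies on $(\sigma^{-1})_*L$, which is the fibre over $b=\pi_{2d-1}((\sigma^{-1})_*L)$, and nothing forces $b$ to equal $a=\pi_1((\eta^{-1})_*L)$.

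If you do identify the two, your own tools rule out what you then propose for $\tilde L$. By the unimodality in \cref{p:unimodal}, once a fibre has a link off $E$ it can never climb again. So a fibre whose order starts and ends at $3$ cannot have its first link centred on $\tilde L\setminus E$. Hence no second proper point $t$ exists, and you only ever obtain Case~\ref{BasePtsJonq1}.

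Cases~\ref{BasePtsJonq2} and~\ref{BasePtsJonq3} are exactly the situation $a\neq b$. There the order on the fibre over $a$ goes from $3$ to $0$, the order over $b$ goes from $0$ to $3$, and every other fibre goes from $0$ to $0$.
\begin{itemize}
\item Over $b$, the order climbs along $E$ to some value $d_1-1$ and then descends to $3$, so $d_1-1\ge 3$. This, and not ascents preceding the descent at $t$, is where $d_1\ge 4$ comes from. It produces the chain $p_{1,1}\leftarrow\dots\leftarrow p_{1,2d_1-5}$.
\item Over $a$, suppose first that the first centre is off $E$. Then it is a proper point $t\in L\setminus\{p\}$, and the order drops straight from $3$ to $0$. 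This gives the tail $t\leftarrow t_1\leftarrow t_2$, with $t_2$ not proximate to $t$.
\item Otherwise the first centre over $a$ is $E\cap\tilde L$, which is why $p_{2,1}$ lies on the strict transform of $L$. The order then climbs $d_2-1\ge1$ times and descends $d_2+2$ times, giving the $2d_2+1$ points of Case~\ref{BasePtsJonq3}.
\end{itemize}
This analysis also gives the first assertion. A proper base point other than $p$ is a centre on $X_1\setminus E$ lying on a pole, hence on $\tilde L\setminus E$, and unimodality allows at most one such point.

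The same confusion affects your converse. In Cases~\ref{BasePtsJonq2} and~\ref{BasePtsJonq3}, ascents and descents do not balance on the fibres over $a$ and $b$. What you need instead is the following:
\begin{itemize}
\item The total number of centres on $E$ equals the number off $E$; both equal $\sum_i(d_i-1)=d-1$. Then $E^2=-1$ at the end, and $X_{2d-1}\simeq\F_1$ with $E$ as negative section.
\item Exactly one fibre ends with a triple pole.
\end{itemize}

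Your fallback for three collinear simple base points also fails as stated. For $d\ge 3$, neither Bézout nor \cref{TwoptsHom} gives a contradiction for three simple points on a line. Moreover, a quadratic map cannot have three collinear base points, so \cref{PropDelta} cannot be applied as you suggest. The fallback is also unnecessary. Once you know every base point is infinitely near $p$ or $t$, every line through a base point passes through $p$ or $t$, and through both only if it is $L$. So your first Bézout count already covers every triple except $t,t_1,t_2$.
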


\begin{figure}[htb]
\centering
\begin{minipage}{10cm}
\begin{enumerate}[itemsep=3mm]
\item\label{BasePtsJonq1}
\begin{tikzcd}[row sep = 2]
 & p_{1,1} \ar[ld] & p_{1,2}\ar[l] & \ar[l] \cdots & \ar[l]p_{1,2d_1-2}\\
p& \vdots\\
 & p_{m,1}\ar[lu]&p_{m,2}\ar[l] &\ar[l] \cdots & \ar[l]p_{m,2d_m-2}
\end{tikzcd}
\hrule
\item\label{BasePtsJonq2}
\begin{tikzcd}[row sep = 2]
&p_{1,1}\ar[ldd] & p_{1,2}\ar[l]&\ar[l] \cdots & \ar[l]p_{1,2d_1-5}\\
 &p_{2,1}\ar[ld] & p_{2,2}\ar[l]&\ar[l] \cdots & \ar[l]p_{2,2d_2-2}\\
 p& \vdots\\
 &p_{m,1}\ar[lu]&p_{m,2}\ar[l] &\ar[l] \cdots & \ar[l]p_{m,2d_m-2}\\
t & t_1\ar[l] & t_2\ar[l]
\end{tikzcd}
\hrule
\item\label{BasePtsJonq3}
\begin{tikzcd}[row sep = 2]
& p_{1,1} \ar[ldd] & p_{1,2} \ar[l] & \ar[l] \cdots & p_{1, 2d_1 -5} \ar[l] \\
& p_{2,1} \ar[ld] & p_{2,2} \ar[l] & \cdots \ar[l] & p_{2, 2d_2 +1} \ar[l] \\
p & p_{3,1} \ar[l] & p_{3,2} \ar[l] & \cdots \ar[l] & p_{3, 2d_3 -2} \ar[l] \\
& \vdots \\
& p_{m,1} \ar[uul] & p_{m,2} \ar[l] & \cdots \ar[l] & p_{m, 2d_m -2} \ar[l]
\end{tikzcd}
\end{enumerate}
\end{minipage}
\caption{Configurations of base points.}
\label{f:BasePtsJonq}
\end{figure}

\begin{proof}
We decompose $\phi$ as $\phi =\sigma \chi_{2d-2} \dots \chi_1 \eta^{-1}$, as in \cref{l:links_jonquieres}, where $\eta$ is the blowup of the point $p\in L$, and $\sigma$ the blowup of a point $q \in L$ (\cref{p:base_points_on_poles}).
The exceptional divisors $\eta^{-1}(p)\subset X_{1}$ and $\sigma^{-1}(q)\subset X_{2d-1}$ are the only sections with poles.
Hence, $\chi_{2d-2} \dots \chi_1 $ sends the first section on the second.
We denote by $E$ the section with a pole in each $X_i$, which is always the strict transform of $\eta^{-1}(p)$.

We define $a,b\in \P^1$ as $a=\pi_1((\eta^{-1})_*{L})$ and $b=\pi_{2d-1}((\sigma^{-1})_*{L})$.
These are the points corresponding to the line at infinity at the source and the target.

For each $u\in \P^1$, the sequence $\mu_1, \dots, \mu_{2d-1}$ where $\mu_i = -\ord_{\omega_i}(\pi_i^{-1}(u))$ is unimodal by \cref{p:unimodal}.
If it is not constant, we say that the fibre above $u$ has type $(\mu_1, \mu_{2d-1})$.
Observe that $\mu_1, \mu_{2d-1} \in \{0,3\}$.
We have $\mu_1 = 3$ only if $u =a$, and $\mu_{2d-1} = 3$ only if $u=b$.

If the fibre above $u \in \P^1$ has type $(0,0)$ or $(3,3)$, half of the base points above $u$ lie on $E$, and the other half outside.
We can thus write these points as $p_{i,1}\leftarrow p_{i,2}\leftarrow \dots \leftarrow p_{i,2d_i-2}$, where for each $i$, the points $p_{i,1},\dots,p_{i,d_i-1}$ are proximate to $p$, but each other point $p_{i,j}$ is only proximate to the previous one $p_{i,j-1}$.
Moreover, $p_{i,2}$ is not collinear with $p_{i,1}$ and $p$, otherwise the link centred at $p_{i,2}$ would not exist.

If $a = b$, then the fibre above $a$ either does not contain any base point, or has type $(3,3)$.
Any other fibre containing a base point has type $(0,0)$.
So this case is completely covered by the above discussion, this is Case~\ref{BasePtsJonq1}.

Finally, we consider the case $a \neq b$.
Then the fibre above $a$ has type $(3,0)$, the fibre above $b$ has type $(0,3)$, and any other fibre has type $(0,0)$.
The sequence of $b$ increases from $0$ to $d_1-1\ge 3$, then decreases from $d_1-1$ to $3$.
This gives a sequence of points $p_{1,1}\leftarrow p_{1,2}\leftarrow \dots \leftarrow p_{1,2d_1-5}$, where $p_{1,1},\dots,p_{1,d_1-1}$ are proximate to $p$, but the other $p_{1,j}$ (if they exist) are only proximate to $p_{1,j-1}$.
We now have two subcases, according whether the sequence of the fibre $a$ is decreasing, or truly unimodal. Equivalently, this depends whether the first base point on the fibre above $a$ is outside $E$, or on $E$.

If the sequence of $a$ directly decreases, it consists of three points $t\leftarrow t_1 \leftarrow t_2$, where $t\in L\setminus \{p\}$, $t_1$ does not lie on the line $L$ through $t$ and $p$, and $t_2$ is not proximate to $t$.
This gives Case~\ref{BasePtsJonq2}.

If the sequence of $a$ starts by increasing from $3$ to $3+d_2-1$, with $d_2\ge 2$, it gives a sequence $p_{2,1}\leftarrow p_{2,2}\leftarrow \dots \leftarrow p_{2,2d_2+1}$, where $p_{2,1},\dots,p_{2,d_2-1}$ are proximate to $p$, but the other $p_{2,j}$ are only proximate to $p_{2,j-1}$. Moreover, $p_{2,1}$ belongs to the line $L$.
This gives Case~\ref{BasePtsJonq3}.

Conversely, choosing the points as above, we blow up $p$, then perform links at all the points. The conditions are chosen such that we can perform the links after each other, and ensure that after the links $\chi_1,\dots,\chi_{2d-2}$ we end-up with a $\P^1$-bundle $\pi_{2d-1}\colon X_{2d-1}\to \P^1$ with a section $E$ having a pole of order $2$, one fibre with a pole of order $3$ and all other fibres being neither a pole nor a zero. Moreover, half of the points being on $E$, we find $E^2=-1$ on $X_{2d-1}$.
Hence, $X_{2d-1}\simeq \F_1$, and there is a birational morphism $\sigma\colon X_{2d-1}\to \P^2$ sending the fibre with a triple pole onto $L$.
We obtain a Jonquières map $\phi=\sigma\chi_{2d-1}\dots \chi_1\eta^{-1}\in \Bir(\P^2,3L)$ with the desired base points.
\end{proof}

\begin{definition}
\label{d:tail}
In Case \ref{BasePtsJonq2} of \cref{Prop:BasePtsJonq}, we say that the three base points $t \leftarrow t_1 \leftarrow t_2$ form the \emph{tail} of the Jonquières map $\phi$.
\end{definition}

\begin{corollary}\label{Cor:SimplificationJonqDec}
Let $\Lambda$ be a homaloidal linear system on $\P^2$, and let $\phi\in \Jonq(3L)\setminus A$ be a Jonquières map with maximal base point $p\in L$, such that
\[\deg(\phi(\Lambda))\le \deg(\Lambda).\]
There exists a Jonquières map $\psi\in \Jonq(3L)\setminus A$ with maximal base point $p$, and with all of its base points among those of $\phi$, such that
\[
\deg(\psi(\Lambda))\le \deg(\Lambda),
\]
and such that the following hold:
\begin{enumerate}
\item\label{CorSimp1}
 The map $\psi$ corresponds to Case~\ref{BasePtsJonq1} or \ref{BasePtsJonq2} of \cref{Prop:BasePtsJonq} with $m=1$, or Case~\ref{BasePtsJonq3} of \cref{Prop:BasePtsJonq} with $m=2$.
 \item\label{CorSimp2}
   If $\deg(\phi(\Lambda))< \deg(\Lambda)$, then $\deg(\psi(\Lambda))< \deg(\Lambda)$.
\item\label{CorSimp3}
If $\psi$ has a tail $t\leftarrow t_1\leftarrow t_2$, then either $m_t(\Lambda)+m_{t_1}(\Lambda)+m_{t_2}(\Lambda)<\deg(\Lambda)$, or $\deg(\psi)=4$.
\end{enumerate}
\end{corollary}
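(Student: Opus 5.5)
We build $\psi$ by keeping $p$ and only some chains of base points of $\phi$, namely the chains with the largest contribution to the degree drop. The tool is the second formula of \cref{PropDelta}. Write $d=\deg(\Lambda)$ and $m_q=m_q(\Lambda)$. Group the $2\deg(\phi)-2$ simple base points of $\phi$ according to the chains of \cref{Prop:BasePtsJonq}: the chain $p_{i,1}\leftarrow\dots\leftarrow p_{i,2d_i-2}$, the chain $p_{1,1}\leftarrow\dots\leftarrow p_{1,2d_1-5}$ together with the tail $t,t_1,t_2$, or the two chains $p_{1,\bullet}$ and $p_{2,\bullet}$ taken together in Case~\ref{BasePtsJonq3}. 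Pair consecutive points inside each group. Each pair $(q,q')$ then contributes $m_p+m_q+m_{q'}-d$ to $\deg(\Lambda)-\deg(\phi(\Lambda))$. Summing over a group gives its contribution $\delta_G$, and we have $\deg(\Lambda)-\deg(\phi(\Lambda))=\sum_G\delta_G\ge 0$. So some group has $\delta_G\ge0$, and $\delta_G>0$ if the inequality is strict.

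Take such a group $G$ and let $\psi$ be the Jonquières map with maximal point $p$ (multiplicity $\deg\psi-1$) and with simple base points exactly the points of $G$. The converse part of \cref{Prop:BasePtsJonq} shows that $\psi$ exists and lies in $\Jonq(3L)$. Its configuration is the corresponding sub-configuration, with $m=1$ in Cases~\ref{BasePtsJonq1}, \ref{BasePtsJonq2} and $m=2$ in Case~\ref{BasePtsJonq3}. The numerical conditions on the $d_i$ still hold. The proximity conditions hold because they are local to each chain. The non-collinearity condition holds because it is inherited from $\phi$.

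The case of the tail needs care. The single chains $p_{1,\bullet}$ in Case~\ref{BasePtsJonq2} and $p_{1,\bullet},p_{2,\bullet}$ in Case~\ref{BasePtsJonq3} have lengths $2d_1-5$ or $2d_2+1$, which are odd. Such a chain does not define a map in $\Jonq(3L)$ on its own. In Case~\ref{BasePtsJonq3} it has to be paired with the other such chain. In Case~\ref{BasePtsJonq2} it must be kept together with the tail, since the fibres above $a\ne b$ are forced. Applying \cref{PropDelta} to $\psi$ gives $\deg(\Lambda)-\deg(\psi(\Lambda))=\delta_G$, which proves \ref{CorSimp1} and \ref{CorSimp2}. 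Also $\psi\notin A$, since $\deg\psi\ge2$: every group is nonempty, and if $\deg\psi$ were $1$ there would be no base points at all.

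The remaining point is \ref{CorSimp3}; the choice of pairing is the step where this part could still fail. Suppose $\psi$ has a tail, so $G=\{p_{1,1},\dots,p_{1,2d_1-5},t,t_1,t_2\}$ with $\deg\psi=d_1-1$, and suppose $m_t+m_{t_1}+m_{t_2}\ge d$ and $d_1\ge 6$. We want to shorten the chain while keeping a nonnegative (resp. positive) degree drop. The plan is to pair the last points $p_{1,2d_1-6},p_{1,2d_1-5}$ of the chain. The multiplicities along a chain are nonincreasing, and Lemma \ref{TwoptsHom} bounds $m_p+m_q\le d$. Using these facts we would show that removing those two points changes the degree drop by $d-m_p-m_{p_{1,2d_1-6}}-m_{p_{1,2d_1-5}}$ up to the tail term. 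We would then check that this change is nonnegative whenever $m_t+m_{t_1}+m_{t_2}\ge d$. If this pairing does not give the required estimate, we would choose the pairing inside $G$ differently, grouping $t$ with $t_1$ and $t_2$ with the last chain point, so that the term involving the tail appears explicitly.

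Iterating the shortening, the chain shrinks until either the tail inequality $m_t+m_{t_1}+m_{t_2}<d$ holds or $d_1=5$, that is $\deg\psi=4$, the minimal degree allowed by $d_1\ge4$ with the tail present. At each step the truncated configuration still satisfies the hypotheses of \cref{Prop:BasePtsJonq}, and its base points are among those of $\phi$.
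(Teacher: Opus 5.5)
Your treatment of parts (1) and (2) is the paper's argument. Summing $m_p+m_q+m_{q'}-d$ over the pairs of a group is the same as summing the paper's $s(b)=m_b-\frac{d-m_p}{2}$ over that group. Your groups (the tail with the odd chain in Case (2), the two odd chains together in Case (3)) are exactly the paper's sets $\mathcal{B}_i$.

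The gap is part (3), which you leave as a plan and say may fail. Write $k=\deg\psi$. Note first that in Case (2) with $m=1$ one has $k=d_1$, not $d_1-1$, so your stopping rule ``$d_1=5$'' would leave a map of degree $5$. Deleting $p_{1,2k-6},p_{1,2k-5}$ changes $d-\deg(\psi(\Lambda))$ by exactly $d-m_p-m_{p_{1,2k-6}}-m_{p_{1,2k-5}}$; there is no tail term. The two facts you invoke, monotonicity along the chain and $m_p+m_q\le d$, do not make this nonnegative when $m_t+m_{t_1}+m_{t_2}\ge d$. Taking all of $m_p$, $m_{p_{1,j}}$, $m_t$, $m_{t_1}$, $m_{t_2}$ equal to $d/2$ is compatible with both facts and gives $-d/2$. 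Your fallback of pairing differently cannot help either: the degree drop of a fixed set of base points is $\sum s(b)$ over that set, whatever the pairing.

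Either of the following closes the gap.

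The paper passes directly to the degree-$4$ map $\psi'$ with base points $t,t_1,t_2,p,p_{1,1},p_{1,2},p_{1,3}$. If $\sum_{i\ge 4}s(p_{1,i})\le 0$, the drop does not decrease. Otherwise monotonicity of $s$ along the chain gives $s(p_{1,4})>0$, i.e. $m_p+2m_{p_{1,4}}>d$. Since $m_{p_{1,4}}\le m_p$, this gives $m_p>d/3$, hence $s(t)+s(t_1)+s(t_2)\ge d-\frac32(d-m_p)>0$. Together with $s(p_{1,1}),s(p_{1,2}),s(p_{1,3})\ge s(p_{1,4})>0$, this gives $\deg(\psi'(\Lambda))<d$.

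Alternatively, your step-by-step shortening works once you add the proximity inequality. For $k\ge 5$ we have $2k-6\ge k-1$, and $p_{1,1},\dots,p_{1,k-1}$ are proximate to $p$, so $m_{p_{1,2k-6}},m_{p_{1,2k-5}}\le m_{p_{1,k-1}}\le m_p/(k-1)$. On the other hand, $d\le m_t+m_{t_1}+m_{t_2}\le 3m_t\le 3(d-m_p)$ gives $m_p\le 2d/3$. Hence $m_p+m_{p_{1,2k-6}}+m_{p_{1,2k-5}}\le \frac{k+1}{k-1}\cdot\frac{2d}{3}\le d$. So each shortening keeps $\deg(\psi(\Lambda))\le d$, and keeps strictness, all the way down to degree $4$.
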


\begin{proof}Write $D=\deg(\Lambda)$, $d=\deg(\phi)$. The linear system of $\phi$ consists of curves of degree $d$ with multiplicity $d-1$ at $p$ and multiplicity $1$ at $2d-2$ other points (\cref{l:noether_rel}).
Denoting by $\mathcal{B}$ the set of base points of $\varphi$ different from $p$ and writing $m_u=m_u(\Lambda)$ the multiplicity of $\Lambda$ at a point $u$, we find
\begin{align*}
D-\deg(\phi(\Lambda))&\stackrel{\text{Proposition~\ref{PropDelta}}}{=}\sum\nolimits_{b\in \mathcal{B}} m_b-(D-m_p)\cdot (d-1)=\sum\nolimits_{b\in \mathcal{B}} s(b),
\end{align*}
where $s(b)=m_b-\frac{D-m_p}{2}$. By assumption, we have $\sum_{b\in \mathcal{B}} s(b)\ge 0$.

We now apply \cref{Prop:BasePtsJonq} and decompose the set $\mathcal{B}$ into $m$ disjoint sets $\mathcal{B}_1,\dots,\mathcal{B}_m$ $(m-1$ sets $\mathcal{B}_1,\mathcal{B}_3,\dots,\mathcal{B}_m$ in Case~$\ref{BasePtsJonq3})$. We define $\mathcal{B}_i=\{p_{i,1},\dots,p_{i,2d_i-2}\}$ for each $i$, except in Case~\ref{BasePtsJonq2} for $i=1$, where $\mathcal{B}_1=\{p_{1,1},\dots,p_{i,2d_1-5},t,t_1,t_2\}$, and in Case~\ref{BasePtsJonq3}, where $\mathcal{B}_1=\{p_{1,1},\dots, p_{1, 2d_1 -5},p_{2,1},\dots, p_{2, 2d_2 +1} \}$, and where there is no $\mathcal{B}_2$.

As $\sum_{b\in \mathcal{B}} s(b)\ge 0$, there exists $i\in \{1,\dots,m\}$ $(i\not=2$ in Case~$\ref{BasePtsJonq3})$ such that $\sum_{b\in \mathcal{B}_i} s(b)\ge 0$ (with a strict inequality if the first inequality is strict).
\cref{Prop:BasePtsJonq} yields the existence of a Jonquières map $\psi\in \Bir(\P^2,3L)\setminus A$ with maximal base point $p$ having $p\cup \mathcal{B}_i$ as base points.
We then find $\deg(\phi(\Lambda))\le D$, with a strict inequality if $\deg(\phi(\Lambda))< D$.
This proves~\ref{CorSimp1} and \ref{CorSimp2}.

To prove~\ref{CorSimp3}, assume that $\psi$ has a tail $t\leftarrow t_1\leftarrow t_2$, that $k=\deg(\psi)>4$ and that $m_t+m_{t_1}+m_{t_2}\ge D$. The base points of $\psi$ are thus
\[t\leftarrow t_1\leftarrow t_2, \quad p\leftarrow p_{1,1}\leftarrow p_{1,2}\leftarrow \cdots \leftarrow p_{1,2k-5}.\]
\cref{Prop:BasePtsJonq} yields the existence of $\psi'\in \Jonq(3L)$ of degree $4$ with base points
 \[t\leftarrow t_1\leftarrow t_2, \quad p\leftarrow p_{1,1}\leftarrow p_{1,2}\leftarrow p_{1,3}.\]
 We replace $\psi$ with $\psi'$ and need to show that $\deg(\psi(\Lambda))\le D\Rightarrow \deg(\psi'(\Lambda))\le D$ and $\deg(\psi(\Lambda))< D\Rightarrow \deg(\psi'(\Lambda))< D$.
Writing $\mathcal{B}=\{t,t_1,t_2,p_{1,1},p_{1,2},\ldots,p_{1,2k-5}\}$ and
$\mathcal{B}'=\{t,t_1,t_2,p_{1,1}$, $p_{1,2}$, $p_{1,3}\}$, \cref{PropDelta} yields:
 \[D-\deg(\psi(\Lambda))=\sum\nolimits_{b \in \mathcal{B}} s(b),\quad D-\deg(\psi'(\Lambda))=\sum\nolimits_{b \in \mathcal{B}'} s(b).\]
 If $\sum_{b\in \mathcal{B}\setminus \mathcal{B}'} s(b)\le 0$, then $\deg(\psi'(\Lambda))\le \deg(\psi(\Lambda))$ and we are done. Assume thus that $\sum_{b\in \mathcal{B}\setminus \mathcal{B}'} s(b)=\sum_{i=4}^{2k-5} s(p_{1,i})> 0$. Since $m_p\ge m_{p_{1,1}}\ge m_{p_{1,2}}\ge \cdots\ge m_{p_{1,2k-5}}$, we have $s(p_{1,1})\ge s(p_{1,2})\ge \cdots \ge s(m_{p_{1,2k-5}})$. Hence, $0<s(p_{1,4})=m_{p_{1,4}}-\frac{D-m_p}{2}$, so $2m_{p_{1,4}}+m_p>D$. In particular $3m_p\ge 2m_{p_{1,4}}+m_p>D$, so $m_p>D/3$.

 By hypothesis, $m_t+m_{t_1}+m_{t_2}\ge D$, so $s(m_t)+s(m_{t_1})+s(m_{t-2})=m_t+m_{t_1}+m_{t_2}-\frac{3}{2}(D-m_{p})\ge D-\frac{3}{2}(D-m_{p})=\frac{3m_p-D}{2}> 0$. As $s(p_{1,1})\ge s(p_{1,2})\ge s(p_{1,3})\ge s(p_{1,4})>0$, we obtain $\sum_{b\in B} s(b)>0$, and thus $\deg(\psi'(\Lambda))<D$, achieving the proof.
\end{proof}

\subsection{Jonquières maps with a tail}
As we explained in Definition~\ref{d:tail}, an element of $\Jonq(3L)$ has a tail $t \leftarrow t_1 \leftarrow t_2$ if it corresponds to Case \ref{BasePtsJonq2} of \cref{Prop:BasePtsJonq}.
This is the only case of elements of $\Jonq(3L)$ with two proper base points.

\begin{lemma}\label{Lem:Tail4unique}
Let $\phi\in \Bir(\P^2,3L)$ be of degree $4$ with a tail, and assume that the three points of the tail are not collinear. Then, there exist $\rho,\kappa\in A$ such that
\[
\rho\phi\kappa = \chi_1 = \left(-xy^2+y,\frac{1}{y}\right)
\]
is the map of degree $4$ of the relation of Theorem~$\ref{t:main_bis}$.
\end{lemma}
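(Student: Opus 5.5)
Degree $4$ with a tail means we are in Case~\ref{BasePtsJonq2} of \cref{Prop:BasePtsJonq} with $\sum(d_i-1)=3$ and $d_1\ge 4$, hence $m=1$ and $d_1=4$. The base points of $\phi$ are therefore
\[
p \leftarrow p_{1,1}\leftarrow p_{1,2}\leftarrow p_{1,3}, \qquad t\leftarrow t_1\leftarrow t_2,
\]
with $p,t\in L$ distinct proper points. Here $p_{1,1},p_{1,2},p_{1,3}$ are proximate to $p$, $t_1$ is not on (the strict transform of) $L$, and $t_2$ is not proximate to $t$. By hypothesis $t,t_1,t_2$ are not collinear.

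The strategy is to show that the affine group acts transitively on such configurations, up to the remaining freedom, and then compare with $\chi_1$. First I compute the base points of $\chi_1=(-xy^2+y,1/y)$ directly. Homogenising gives $[-xy^2+yz^2 : z^3\cdot z/y\cdot\ldots]$; more cleanly, $\chi_1=[-xy^2+yz^2 : z^3 : yz^2]\cdot$(adjusted to degree $4$). From this I read off that the maximal point is $[1:0:0]$ and the tail starts at $[0:1:0]$. Its tail is not collinear, because the curve $x y\ldots$ osculates with a conic rather than a line. So $\chi_1$ is of the same type as $\phi$.

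Next comes the normalisation. $A$ acts on $L$ as $\PGL_2$ on a line, 2-transitively, so we may assume $p=[1:0:0]$ and $t=[0:1:0]$. The stabiliser of $\{p,t\}$ in $A$ consists of the maps $(x,y)\mapsto(ax+c,by+e)$. In these coordinates the branch through $p$ given by $p_{1,1},p_{1,2},p_{1,3}$ corresponds to a jet $x\mapsto y=\alpha+\ldots$ of curves through $p$. One can check that with $p_{1,1}$ on $L$ these points are determined by a curve of the form $y=\alpha+\beta/x$, up to the proximity constraints. Translating $y$ kills $\alpha$. Likewise the tail $t\leftarrow t_1\leftarrow t_2$ is a 2-jet of a smooth curve at $t$ transverse to $L$, i.e.\ $x=\gamma+\delta/y$ in the chart near $t$, and noncollinearity gives $\delta\neq 0$. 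Translating $x$ kills $\gamma$, and the scalings $a,b$ normalise $\delta$ and $\beta$ to the values of $\chi_1$.

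Rather than tracking jets carefully, a cleaner route is to pass to $\phi^{-1}$ or compose with $\chi_1^{-1}$. I would show that $\psi=\phi\circ\kappa\circ\chi_1^{-1}$, for a suitable $\kappa\in A$ matching the base points of $\phi\kappa$ with those of $\chi_1$, has no base points. It is then an automorphism of $\P^2$ preserving $L$, i.e.\ an element of $A$, and we take $\rho=\psi^{-1}$. The justification is the standard fact that two birational maps with the same base points and multiplicities (the same homaloidal linear system, via \cref{l:noether_rel}) differ by a projective automorphism on the left; volume preservation then forces that automorphism to preserve $L$.

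The main obstacle is the transitivity count: showing that the 4-dimensional stabiliser of $\{p,t\}$ in $A$ acts transitively on the admissible configurations of the six infinitely near points. I would first count parameters. The points $p_{1,1}$, $p_{1,2}$ and $p_{1,3}$ each move in a 1-dimensional family, except $p_{1,1}$, which is forced to lie on the strict transform of $L$ or at least is constrained by the existence of the links. The points $t_1,t_2$ each contribute one parameter. Subtracting the constraints from \cref{Prop:BasePtsJonq} (that a link must exist at each step, and the pole bookkeeping in the proof there), I expect exactly four free parameters, with the open conditions $\beta,\delta\neq 0$. I would then verify explicitly in local coordinates that translations and scalings act simply transitively on them. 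If the count gives five, I would use the additional freedom of $\lambda$, or the proximity conditions, to cut it down.
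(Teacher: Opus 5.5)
Your skeleton matches the paper's proof. You find $\kappa\in A$ moving the base points of $\phi$ onto those of $\chi_1$. Then two maps with the same base points have the same homaloidal system, so they differ by a left automorphism of $\P^2$, which lies in $A$ because it preserves $3L$. Your ``cleaner route'' via $\phi\kappa\chi_1^{-1}$ is the same argument and needs the same $\kappa$.

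The gap is the only step with real content: that $A$ acts transitively on the admissible configurations. You leave it as an expectation, and the description you base it on is wrong.
\begin{enumerate}
\item $p_{1,1}$ is \emph{not} on $L$. Otherwise $L$ would pass through $p$, $p_{1,1}$ and $t$, and would meet a member of the degree-$4$ system in at least $3+1+1>4$ points. (In the proof of \cref{Prop:BasePtsJonq}, this chain lies over the fibre $b\neq a$.)
\item More importantly, $p_{1,2}$ and $p_{1,3}$ are proximate to $p$. So they are the satellite points on the strict transform of $E_p$ in the first neighbourhoods of $p_{1,1}$ and $p_{1,2}$. They are uniquely determined by $p_{1,1}$, and no smooth curve such as $y=\alpha+\beta/x$ passes through them.
\end{enumerate}
Hence there is no parameter $\beta$: you have inverted the roles, since $p_{1,1}$ is the free point and $p_{1,2},p_{1,3}$ are forced.

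The correct count is three parameters, not four:
\begin{enumerate}
\item the direction $y=\alpha$ of $p_{1,1}$ at $p=[1:0:0]$;
\item the direction $x=\gamma$ of $t_1$ at $t=[0:1:0]$;
\item the point $t_2$. It is not the satellite point of $E_{t_1}$, because $t_2$ is not proximate to $t$, and it is not the point of the line $x=\gamma z$, by noncollinearity. So it is the direction of a conic $(x-\gamma z)y=\delta z^2$ with $\delta\neq 0$.
\end{enumerate}
In the group $(x,y)\mapsto(ax+c,by+e)$, translations kill $\alpha$ and $\gamma$, and the scalings rescale $\delta$ by $ab$. So you reach $\alpha=\gamma=0$, $\delta=1$.

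This action is transitive but not simply transitive: the torus $(ax,a^{-1}y)$ fixes the normalised configuration. So the verification of simple transitivity you planned would fail, although transitivity is all that is needed.

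You also need to check that $\chi_1$ realises exactly this normal form, and your homogenisation of $\chi_1$ is wrong. The correct one is $[(-xy+z^2)y^2:z^4:yz^3]$.
\begin{enumerate}
\item At $[1:0:0]$ its members have tangent cone $y^3=0$, so $p_{1,1}$ is the direction of $\{y=0\}$.
\item At $[0:1:0]$ they are locally $-x+z^2+\cdots=0$, so $t_1$ and $t_2$ are given by $x=0$ and $xy=z^2$.
\end{enumerate}
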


\begin{proof}
As $\deg(\phi)=4$, we are in Case~\ref{BasePtsJonq2} of \cref{Prop:BasePtsJonq} with $m=1$ and $d_1=4$. We denote the base points of $\phi$ by
\[
t\leftarrow t_1 \leftarrow t_2
\quad \text{ and } \quad
p\leftarrow p_1 \leftarrow p_{2}\leftarrow p_3,
\]
where $t_2$ is not proximate to $t$, and $p_1,p_2,p_3$ are proximate to $p$.

We choose $\kappa\in A$ such that the base points of $\phi\kappa$ are
\[
t'=\pcoor{0:1:0} \leftarrow t_1'=\{x=0\}\leftarrow t_2', \quad p'=\pcoor{1:0:0}\leftarrow p_1'=\{y=0\}\leftarrow p_{2}'\leftarrow p_3',
\]
where we give the direction of an irreducible curve to specify an infinitely near point. The point $t_2'$ being not proximate to $t'$ and not collinear with $t'$ and $t_1'$, it corresponds to the direction of $xy-\lambda z^2$ for some $\lambda\in \k^*$. Modifying $\kappa\in A$, we may assume that $\lambda=1$. There exists then $\rho\in A$ such that
\[
\rho\phi\kappa=\chi_1=
\pcoor{(-xy+z^2)y^2:z^4:yz^3}=(-xy^2+y,1/y).
\qedhere
\]
\end{proof}

\begin{remark}
\label{Rem:JonqProdRel}
Let $p\in L$ be a point and let $\phi_1,\dots,\phi_n\in \Bir(\P^2,3L)$ be elements that preserve the pencil of lines through $p$ $($in particular, $\phi_1,\dots,\phi_n\in \Jonq(3L))$.
If $\phi_1\dots \phi_n=\id$, then this relation holds in $\lb \Jonq(3L) \mid R \rb$.
Indeed, if $n\le 3$, the relation belongs to $R$ and if $n\ge 4$, we write $\psi=\phi_1\dots \phi_{n-2}\in \Jonq(3L)$, obtain that $\phi_1\dots \phi_{n-2}\psi^{-1}=\id$ and $\psi \psi_{n-1}\psi_{n}=\id$ and that both relations hold in $\lb \Jonq(3L) \mid R \rb$ by induction, so their product too.
\end{remark}

\begin{lemma}
\label{l:relation_toolboxNew}
For all $\alpha\in \k$, writing $\zeta_\alpha=(x , y+ \frac{\alpha}{x})$, $\chi_\alpha=(-xy^2+\alpha y,\frac{1}{y} )\in \Jonq(3L)$, the relations
\[\chi_\alpha \zeta_\alpha \chi_{-\alpha}=\zeta_\alpha, \quad \zeta_\alpha\zeta_{-\alpha}=\id \text{ and } \chi_{-\alpha}\chi_\alpha=\id\]
hold in the group $\lb \Jonq(3L) \mid R \rb$ from \cref{t:main_bis}.\end{lemma}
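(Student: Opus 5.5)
The two relations $\zeta_\alpha\zeta_{-\alpha}=\id$ and $\chi_{-\alpha}\chi_\alpha=\id$ are words of length $2$ in the generators $\Jonq(3L)$. By definition of $R$, every such relation that holds in $\Bir(\P^2)$ is already in $R$. So for these two it suffices to check by direct computation that the identities hold in $\Bir(\P^2,3L)$. For the $\zeta$'s this is immediate, since $\zeta_a\zeta_b=\zeta_{a+b}$. For the $\chi$'s I would carefully recheck the composition convention, because a first computation suggests that $\chi_\alpha$ might be an involution. This has to be settled before anything else.

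The real content is the first relation. Since $\zeta_\alpha^{-1}=\zeta_{-\alpha}$, it is the length-$4$ word $\chi_\alpha\zeta_\alpha\chi_{-\alpha}\zeta_{-\alpha}$. First I check that it holds in $\Bir(\P^2)$. With $(X,Y)=\chi_{-\alpha}(x,y)$ one gets
\[
\zeta_\alpha\chi_{-\alpha}=\bigl(-y(xy+\alpha),\tfrac{x}{xy+\alpha}\bigr),
\]
and applying $\chi_\alpha$ returns $(x,y+\tfrac{\alpha}{x})=\zeta_\alpha$. It remains to derive it from relations of length $\le 3$. The approach is to split each of $\chi_\alpha$ and $\zeta_\alpha$ into simpler factors whose pairwise products are again Jonquières. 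This way the length-$4$ relation becomes a concatenation of triangles, each a length-$3$ relation in $R$, together with relations among elements preserving a common pencil, which hold by \cref{Rem:JonqProdRel}.

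Concretely, I will use the factorisations
\[
\chi_\alpha=\kappa\circ\eta_{-\alpha},\qquad \kappa=\bigl(-xy^2,\tfrac1y\bigr),\quad \eta_a=\bigl(x+\tfrac{a}{y},y\bigr),
\qquad \zeta_\alpha=s\,\eta_\alpha\,s,\quad s=(y,x)\in A.
\]
Here $\kappa$ and $\eta_a$ lie in $B$, so they preserve the pencil through $\pcoor{1:0:0}$. On the other hand, $\zeta_a$ preserves the pencil through $\pcoor{0:1:0}$.

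The key step, and the one I expect to be the main obstacle, is to find intermediate products that are Jonquières elements of $\Jonq(3L)$, for instance $\eta_{-\alpha}\zeta_\alpha$ or $\zeta_\alpha\kappa$. I will determine which of them are Jonquières by computing their homaloidal types and base points with \cref{Prop:BasePtsJonq}, using that $\eta_a$ and $\zeta_a$ are quadratic maps with base points on $L$. One candidate already fails: the product $\zeta_\alpha\chi_{-\alpha}$ has degree $5$, and its multiplicity at $\pcoor{1:0:0}$ is only $3$, so it is not Jonquières. This is why the splitting of $\chi_\alpha$ seems necessary.

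Once suitable Jonquières products $\psi_1,\psi_2,\dots$ are found, I will write the word as
\[
\kappa\,(\eta_{-\alpha}\zeta_\alpha)\,\eta_{\alpha}\,\kappa^{-1}\zeta_{-\alpha}
\]
and replace each adjacent pair by its product, recording each replacement as a length-$3$ relation. If no pair of such products works, I will also insert affine maps such as $s$ and diagonal maps, which commute suitably with $\kappa$, $\eta$ and $\zeta$. The final step is to check that the resulting cycle of triangles closes up, either inside a single pencil-preserving subgroup, where \cref{Rem:JonqProdRel} applies, or as a relation of length $\le 3$ directly.
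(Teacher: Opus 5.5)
\textbf{The plan cannot work.} You propose to derive the first relation from relations of length $\le 3$, together with pencil-preserving relations from \cref{Rem:JonqProdRel}, which are themselves consequences of length-$\le 3$ relations. For $\alpha\neq 0$ this is impossible, whatever factorisations of $\chi_\alpha,\zeta_\alpha$ and affine insertions you use.

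Put $\epsilon=(x,-y)$ and $\nu=(x,-y+\tfrac1x)$. The identities $\nu=\zeta_1\epsilon$, $\epsilon\chi_1=\chi_{-1}\epsilon$ and $\epsilon\nu=\zeta_{-1}$ each involve maps preserving one common pencil. Together they give $(\chi_1\nu)^2=\chi_1\zeta_1\chi_{-1}\zeta_{-1}$ modulo length-$\le 3$ relations. Conjugating by $(\alpha x,y)$ reduces any $\alpha\neq0$ to $\alpha=1$. So for $\alpha\neq 0$ your relation is equivalent, modulo length-$\le3$ relations, to the extra relator $(\chi_1\nu)^2$ of \cref{t:main_bis}.

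By \cref{lemm:Jon3LAB}, the group defined by the length-$\le 3$ relations is $A*_{A\cap B}B$. There $(\chi_1\nu)^2$ becomes $\chi_1\, s\, b\, s\,\chi_1\, s\, b\, s$, with $s=(y,x)\in A\setminus B$ and $\chi_1,\ b=(-x+\tfrac1y,y)\in B\setminus A$. This is a reduced word of length $8$, hence nontrivial by the normal form theorem for amalgamated products. So no cycle of triangles can close up. The obstacle you anticipated is structural, not a matter of finding the right intermediate Jonquières products. (Separately, $\kappa^{-1}=\kappa$ and $\eta_\alpha\kappa=\chi_\alpha$, so your displayed word is $\chi_\alpha\zeta_\alpha\chi_\alpha\zeta_{-\alpha}$, not the relation.)

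\textbf{The missing idea} is to use the length-$4$ relator that belongs to $R$; this is exactly the paper's proof. From $(\chi_1\nu)^2=\id$ and the three identities above, one gets
\[
\nu\chi_1\nu=\zeta_1\epsilon\chi_1\nu=\zeta_1\chi_{-1}\epsilon\nu=\zeta_1\chi_{-1}\zeta_{-1},
\]
hence $\chi_1\zeta_1\chi_{-1}\zeta_{-1}=\id$ in $\lb \Jonq(3L)\mid R\rb$. The case $\alpha=0$ is trivial. For $\alpha\neq 0$ one conjugates by $(\alpha x,y)$, again justified by \cref{Rem:JonqProdRel}. Your verification of the identity in $\Bir(\P^2)$ and your degree-$5$ computation for $\zeta_\alpha\chi_{-\alpha}$ are correct, but they are not needed.

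\textbf{Your suspicion about $\chi_\alpha$ is correct}, and you should state it rather than leave it open. One has $\chi_\alpha\chi_\alpha=\id$, whereas $\chi_{-\alpha}\chi_\alpha=(x-\tfrac{2\alpha}{y},y)$, which is not the identity when $\alpha\neq 0$ and $\mathrm{char}(\k)\neq 2$. So the third relation as printed, and the paper's one-line justification of it, is a misprint for $\chi_\alpha\chi_\alpha=\id$. That is a length-$2$ relation and hence lies in $R$. With this correction, the first relation reads $\zeta_\alpha\chi_{-\alpha}=\chi_\alpha\zeta_\alpha$.
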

\begin{proof}
The relations $\zeta_\alpha\zeta_{-\alpha}=\id$ and $\chi_{-\alpha}\chi_\alpha=\id$ are direct computations and of length $2$, so belongs to $R$. 

Writing $\nu=(x,-y+\frac{1}{x})\in \Jonq(3L)$, the relation of \cref{t:main_bis} is $(\chi_1\nu)^2=\id.$ Writing $\epsilon=(x,-y)\in \Jonq(3L)$, we have $\nu=\zeta_1\epsilon$, $\epsilon\chi_1=\chi_{-1}\epsilon$ and $\epsilon\nu=\zeta_{-1}$.
By \cref{Rem:JonqProdRel}, these three relations hold in $\lb \Jonq(3L) \mid R \rb$, and give
$\nu\chi_1\nu=\zeta_1\epsilon\chi_1\nu=\zeta_1\chi_{-1}\epsilon\nu=\zeta_1\chi_{-1}\zeta_{-1}$ in $\lb \Jonq(3L) \mid R \rb$. Hence, $\chi_1\nu\chi_1\nu=1$ yields $\chi_1\zeta_1\chi_{-1}\zeta_{-1}=1$ in $\lb \Jonq(3L) \mid R \rb$.

This gives the relation $\chi_\alpha \zeta_\alpha \chi_{-\alpha}=\zeta_\alpha$ for $\alpha=1$.
When $\alpha=0$, $\zeta_\alpha=\id$, so the relation holds in $\lb \Jonq(3L) \mid R \rb$ (\cref{Rem:JonqProdRel}).
When $\alpha\in \k\setminus \{0,1\}$, we conjugate $\chi_1 \zeta_1 \chi_{-1}=\zeta_1$ by the linear (hence Jonquières) map $(\alpha x, y)$, and get $\chi_\alpha \zeta_\alpha \chi_{-\alpha}=\zeta_\alpha$.
Again, the conjugation holds in $\lb \Jonq(3L) \mid R \rb$ by \cref{Rem:JonqProdRel}.
\end{proof}

\subsection{Simple maps}\label{Sec:SimpleMaps}
\begin{definition} We say that a birational map $\phi \in \Bir(\P^2,3L)$ of degree $d\ge 2$ is \emph{simple} if it is Jonquières and if there exists lines $L_1,L_2\subset \P^2$, both not equal to $L$, such that $\phi$ restricts to an isomorphism $\P^2\setminus L_1\iso \P^2\setminus L_2$.

Let $d\ge 2$ be an integer. A \emph{simple sequence of points of degree $d$} is a sequence of $2d-1$ points
\[ p\leftarrow p_1 \leftarrow p_2\leftarrow \dots \leftarrow p_{2d-2}\]
where $p\in L$, $p_1$ is not on $L$, $p_1,\dots,p_{d-1}$ are all proximate to $p$, and $p_i$ is only proximate to $p_{i-1}$ for each $i\ge d$.
If $d=2$, we moreover ask that $p,p_1,p_2$ are not collinear (if $d\ge 3$, this follows from the fact that $p_2$ is proximate to $p$).
\end{definition}

\begin{remark}\label{Rem:SimpleBsPts}
Observe that a simple map $\phi\in \Bir(\P^2,3L)$ restricts to a birational map $L\dasharrow L$, which then extends to an isomorphism $L\iso L$. Moreover, in the above definition, $L_1$ is the unique line contracted by $\phi$.

A simple map of degree $d\ge 2$ corresponds to Case~\ref{BasePtsJonq1} of \cref{Prop:BasePtsJonq} with $m=1$ and such that the first infinitely near point $p_1$ is not on $L$.
Its base points are thus a simple sequence of points of degree $d$.
Conversely, every such sequence arises as the base points of a simple map of degree $d\ge 2$, by \cref{Prop:BasePtsJonq}.

\end{remark}

\begin{example}
The birational map
\[
\phi = \left(x + \frac{1}{y^3} + \frac{1}{y^2} + \frac{1}{y}, y\right)
\]
is a simple map in $\Bir(\P^2,3L)$, which contract the line $L_1 = \{y=0\}$.
To see that $\phi$ is an automorphism on $\P^2 \setminus L_1$, one can use homogeneous coordinates and write
\[
\phi = \pcoor{x + \frac{z^4}{y^3} + \frac{z^3}{y^2} + \frac{z^2}{y}: y:z}.
\]

On the other hand, the polynomial automorphism $(x + y^2, y)$ is not a simple map, because the contracted line is equal to $L$.
\end{example}

\begin{propdef}[Successor of two simple maps]\label{p:successor}
Let $\phi,\psi\in \Jonq(3L)$ be simple maps, and assume that the proper base points $p,q\in L$ of $\phi^{-1}$ and $\psi$ respectively are distinct.

For each simple map $\theta\in \Jonq(3L)$, the following conditions are equivalent:
\begin{enumerate}
\item\label{Succ1}
$\theta\psi\phi$ does not contract the unique line contracted by $\phi$
\item\label{Succ2}
$\deg(\theta)=\deg(\phi)$ and the base points of $\theta$ are all base points of $(\psi\phi)^{-1}$, and contain $\psi(p)$.
\item\label{Succ3}
$\psi$ sends the base points of $\phi^{-1}$ onto those of $\theta$.
\end{enumerate}
Moreover, an element $\theta$ that satisfies these conditions is unique up to post-composition with an element of the affine group $A$, and always exists, unless $\deg(\phi)=2$ and $\psi$ sends the three base points of $\phi^{-1}$ onto three collinear points.
The element $\theta$ is called a \emph{successor of $\phi$ and $\psi$}.
\end{propdef}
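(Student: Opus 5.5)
The plan is to reduce everything to the local behaviour of $\psi$ near the point $p$, and to one observation about simple maps. Let $\phi^{-1}$ have base points $p\leftarrow p_1\leftarrow\dots\leftarrow p_{2d-2}$, where $d=\deg\phi$; by \cref{Rem:SimpleBsPts} this is a simple sequence. Let $L_\psi$ be the line contracted by $\psi$; it passes through $q$. Since $L_\psi\neq L$, we have $L_\psi\cap L=\{q\}$, so $p\notin L_\psi$. Hence $\psi$ is a local isomorphism at $p$. By \cref{Rem:SimpleBsPts}, $\psi$ also restricts to an automorphism of $L$. So $\psi$ maps the chain above $p$ to a chain $\psi(p)\leftarrow\psi(p_1)\leftarrow\dots\leftarrow\psi(p_{2d-2})$ with $\psi(p)\in L$, and it preserves proximity relations and the property ``$p_1\notin L$''. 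For $d\ge3$ this makes the image chain a simple sequence of degree $d$, since non-collinearity follows from proximity. For $d=2$ it is simple exactly when the three image points are not collinear. Existence of $\theta$ then follows from \cref{Rem:SimpleBsPts}, and this is also precisely where the stated exception comes from.

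The key observation, which I would prove first, is the following. A simple map $\theta$ of degree $d'$ is an isomorphism outside one line, and its base points form a single chain $r\leftarrow r_1\leftarrow\dots\leftarrow r_{2d'-2}$. In its minimal resolution, every exceptional curve except the last one, $E_{r_{2d'-2}}$, is contracted by $\theta$. Only one curve is created in the target, namely the line contracted by $\theta^{-1}$, and it must be the image of the last exceptional curve, because the others meet later blowups. Dually, $\phi$ contracts $L_1$ exactly onto the point $p_{2d-2}$, meaning that $L_1$ corresponds to the last exceptional divisor $E_{p_{2d-2}}$ of $\phi^{-1}$.

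\ref{Succ3}$\Rightarrow$\ref{Succ1}: $\psi$ is a local isomorphism at $p$, so it sends $E_{p_{2d-2}}$ to $E_{\psi(p_{2d-2})}$. This is the last exceptional curve of $\theta$, which $\theta$ does not contract. Hence $\theta\psi\phi$ maps $L_1$ onto a line.

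\ref{Succ1}$\Rightarrow$\ref{Succ3}: if $L_1$ is not contracted, then $\theta$ must blow up the point $\psi(p_{2d-2})$, since otherwise $\theta$ would be a local isomorphism there and the image of $L_1$ would stay a point. Because the base points of $\theta$ form a chain, $\theta$ then blows up all of $\psi(p),\dots,\psi(p_{2d-2})$. Moreover, $\psi(p_{2d-2})$ must be the last point of the chain, because otherwise its exceptional curve would be contracted. Comparing lengths, $2d'-1=2d-1$, so $d'=d$ and the two sets of base points coincide.

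\ref{Succ2}$\Leftrightarrow$\ref{Succ3}: since $q\neq p$, the base points of $(\psi\phi)^{-1}=\phi^{-1}\psi^{-1}$ are the disjoint union of the base points of $\psi^{-1}$ and the $\psi$-images of the base points of $\phi^{-1}$. Under \ref{Succ2}, $\theta$ has proper base point $\psi(p)$ and its base points form a chain starting there. Those lying over $\psi(p)$ are among the points $\psi(p_i)$, and counting via $\deg\theta=d$ gives equality. The converse is immediate.

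For uniqueness, suppose $\theta,\theta'$ both satisfy these conditions. They have the same base points with the same multiplicities, so $\theta'\theta^{-1}$ sends the net of lines to the net of lines. It is therefore in $\Aut(\P^2)$. Being volume preserving, it preserves $L$, so it lies in $A$.

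The main obstacle is making the exceptional-divisor bookkeeping in \ref{Succ1}$\Rightarrow$\ref{Succ3} rigorous. This includes justifying that only the last exceptional curve of a simple map survives, and handling the possibility that $\theta$ has a base point at $\psi(p_{2d-2})$ without containing the whole chain in the right order. I would settle this using the link factorisation of \cref{l:links_jonquieres}.
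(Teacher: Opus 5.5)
Your proposal is correct and follows essentially the same route as the paper. $\psi$ is a local isomorphism at $p$ preserving $L$, so it transports the simple sequence of $\phi^{-1}$ to a simple sequence (up to the collinear exception), which gives existence and uniqueness via \cref{Rem:SimpleBsPts}; and the equivalence with \ref{Succ1} rests on the line contracted by $\phi$ corresponding to the last exceptional divisor $E_{p_{2d-2}}$. The paper disposes of your ``main obstacle'' in one line, with no link factorisation: $E_{p_{2d-2}}$ is the unique $(-1)$-curve among the exceptional divisors, since each of the others contains a later base point.
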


\begin{proof}
As $\psi(p)$ is not the base point of $\psi^{-1}$, we may (and will) replace $\psi$ with $\alpha\psi$ where $\alpha\in \Aut(\P^2,L)$ and assume that $q$ is the proper base point of $\psi^{-1}$ and that $\psi(p)=p$.

Write $d=\deg(\phi)$ and $r=\deg(\psi)$. The base points of $\phi^{-1}$ and $\psi^{-1}$ are two simple sequences of points (\cref{Rem:SimpleBsPts}), that we write as:
\begin{align*}
p\leftarrow p_1 \leftarrow p_2\leftarrow \dots \leftarrow p_{2d-2}\\
 q\leftarrow q_1 \leftarrow q_2\leftarrow \dots \leftarrow q_{2r-2}\end{align*}
The map $\psi$ restricts to an automorphism $\P^2\setminus L_1\iso \P^2\setminus L_1$ where $L_1$ is the line through $q$ and $q_1$. As $\{q\}=L\cap L_1$, we find $p\in \P^2\setminus L_1$. The automorphism of $\P^2\setminus L_1$ fixes $p$, sends $L$ onto itself and then sends the points $p,p_1,\dots,p_{2d-2}$ onto a set of points
\[p\leftarrow p_1' \leftarrow p_2'\leftarrow \dots \leftarrow p_{2d-2}'\]
which is again a simple sequence of points of degree $d$, unless $d=2$ and the three points $p\leftarrow p_1' \leftarrow p_2'$ are collinear.

The linear system of $\phi^{-1}$ consists of curves of degree $d$ with multiplicity $d-1$ at $p$, and multiplicity $1$ at $p_1,\dots,p_{2d-2}$. The linear system of $(\psi\phi)^{-1}=\phi^{-1}\psi^{-1}$ consists then of curves of degree $dr$ with multiplicity $d(r-1)$ at $q$, multiplicity $d$ at $q_1,\dots,q_{2r-2}$, multiplicity $d-1$ at $p$ and multiplicity $1$ at $p_1',\dots,p_{2d-2}'$.

Condition \ref{Succ3} consists of asking that the base points of $\theta$ are $p,p_1',\dots,p_{2d-2}'$. By \cref{Rem:SimpleBsPts}, such an element is of degree $d$ and exists, except if $d=2$ and the three points are collinear. 
Moreover, if it exists it is unique up to post-composition by an element of $\Aut(\P^2,L)$.

As the base points of a simple map are all infinitely near to a unique proper point, Condition~\ref{Succ2} is equivalent to \ref{Succ3}.

The line $L_2$ contracted by $\phi$ is sent by $\phi$ onto the point $p$, but more precisely on the exceptional divisor associated to $p_{2d-2}$, as this one is the unique $(-1)$-curve in the exceptional divisors obtained by blowing-up $p_1,\dots,p_{2d-2}$, the base points of $\phi^{-1}$.
The map $\psi\phi$ then sends $L_2$ onto the exceptional divisor of $p_{2d-2}'$. Condition \ref{Succ1} then corresponds to asking that $\theta^{-1}$ sends a line onto the exceptional divisor of $p_{2d-2}'$ and is then equivalent to \ref{Succ3}.
\end{proof}

\begin{lemma}
\label{l:successor}
Let $\alpha, \beta, \gamma, \delta \in \k$, and consider the maps
\[
\phi= \left(x, y+ \frac{\delta}{x^3} + \frac{\gamma}{x^2} + \frac{\beta}{x} \right),
\quad
\psi=\left( x+\frac{\alpha}{y},y \right).\]
We assume $(\delta, \gamma, \beta) \neq(0,0,0)$ and $\alpha \neq 0$, so that $\phi$ and $\psi$ are simple maps with $d=\deg(\phi)\in \{2,3,4\}$, and $\deg(\psi)=2$.
Then:
\begin{enumerate}
\item Except if $\delta=\gamma=\alpha+\beta=0$ \parent{which implies that $d=2$}, the successor $\theta_1$ of $\phi$ and $\psi$ exists, and is given by
\[
\theta_1 = \left( x, y-\frac{\delta}{x^3}-\frac{\gamma}{x^2}-\frac{(\beta+(d-1)\alpha)}{x} \right).
\]
\item Except if $\alpha + \beta = 0$, the successor $\theta_2$ of $\psi$ and $\phi$ exists, and is given by
\[
\theta_2 = \left( x - \frac{\alpha + \beta}{y}, y \right).
\]
\end{enumerate}
\end{lemma}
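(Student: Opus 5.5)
The plan is to verify in each case Condition~\ref{Succ1} of \cref{p:successor}: the candidate map $\theta$ composed with $\psi\phi$ (resp. $\phi\psi$) must not contract the line contracted by the first map. First I check the hypotheses of \cref{p:successor}. The map $\phi$ contracts $\{x=0\}$, and the proper base point of $\phi^{-1}=(x,y-h(x))$ is $\pcoor{0:1:0}$, where
\[
h(x)=\tfrac{\delta}{x^3}+\tfrac{\gamma}{x^2}+\tfrac{\beta}{x}.
\]
The map $\psi$ contracts $\{y=0\}$, and its proper base point (and that of $\psi^{-1}$) is $\pcoor{1:0:0}$. These points are distinct, so successors make sense. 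The candidates $\theta_1,\theta_2$ are simple maps of the required degree whenever they are not the identity; this is immediate from their shape, as for $\phi$ and $\psi$. By the uniqueness part of \cref{p:successor}, it then suffices to show that the relevant composite is not constant along the contracted line.

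For \textup{(1)}, write $k(x)=\frac{\delta}{x^3}+\frac{\gamma}{x^2}+\frac{\beta+(d-1)\alpha}{x}$, so that $\theta_1=(x,y-k(x))$. Then
\[
\theta_1\psi\phi=(X,\,y+h(x)-k(X)), \qquad X=x+\frac{\alpha}{y+h(x)}.
\]
I will fix a general value of $y$ and expand in Laurent series in $x$ near $x=0$, i.e.\ along the contracted line $\{x=0\}$. Since $h$ has a pole of order exactly $d-1$ at $0$, we get
\[
X-x=\frac{\alpha}{h}+O\!\left(\frac{1}{h^2}\right)=O(x^{d-1}),
\]
hence $X\to0$. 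Next,
\[
h(x)-h(X)=-h'(x)(X-x)+\dots=-\alpha\,\frac{h'(x)}{h(x)}+o(1).
\]
The key identity is $h'/h=-(d-1)/x+O(1)$, which holds because the leading term of $h$ is $c/x^{d-1}$. It gives $h(x)-h(X)=\frac{(d-1)\alpha}{x}+O(1)$. Moreover $\frac{(d-1)\alpha}{X}=\frac{(d-1)\alpha}{x}+O(x^{d-3})$, and I need to check that this error is $O(1)$, which is fine for $d\ge 3$. Combining,
\[
y+h(x)-k(X)=y+\bigl(h(x)-h(X)\bigr)-\frac{(d-1)\alpha}{X}=y+O(1),
\]
where the $O(1)$ part has a limit at $x=0$ independent of $y$ to leading order. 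So the image of $\{x=0\}$ is the curve $\{X=0\}$ with second coordinate varying with $y$; it is not contracted.

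The main obstacle is the case $d=2$. There $\delta=\gamma=0$ and the expansions must be done exactly: $X=x+\frac{\alpha x}{xy+\beta}$, with $\frac{\alpha}{X}$ compared to $\frac{\alpha}{x}$. This is where the hypothesis $\alpha+\beta\neq0$ enters: it ensures that $X$ does not vanish to higher order. This is the collinearity exception of \cref{p:successor}. I will handle this case by direct computation of the composite.

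For \textup{(2)}, the map is $\theta_2=(x-\frac{\alpha+\beta}{y},y)$, and
\[
\phi\psi=(X,\,y+h(X)), \qquad X=x+\frac{\alpha}{y}.
\]
Fixing $x$ and letting $y\to0$, we have $X\sim\alpha/y$, so $h(X)=\frac{\beta y}{\alpha}+O(y^2)$. Writing $Y=y+h(X)$, this gives $Y=\frac{\alpha+\beta}{\alpha}\,y\,(1+O(y))$. Expanding to the next order, the $O(y^2)$ term involves $-\frac{\beta x y^2}{\alpha^2}$ and $\gamma/\alpha^2$. This yields
\[
\frac{\alpha+\beta}{Y}=\frac{\alpha}{y}+c_0+c_1x+O(y), \qquad c_1\neq0.
\]
Hence the first coordinate $X-\frac{\alpha+\beta}{Y}$ tends to $(1-c_1)x-c_0$. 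I will compute $c_1=\beta/(\alpha+\beta)$, so that $1-c_1=\alpha/(\alpha+\beta)\neq0$. The image of $\{y=0\}$ is therefore the line $\{Y=0\}$, not a point, and $\theta_2$ is the successor.
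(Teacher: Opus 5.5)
This is essentially the paper's proof: both verify Condition~\ref{Succ1} of \cref{p:successor} by evaluating the composite on the contracted line, the paper by exact computation in homogeneous coordinates (obtaining $\pcoor{0:uy+vz:z}$, resp.\ $\pcoor{ux+vz:0:z}$, with $u\neq 0$) and you by Laurent expansion, and your conclusions are right.

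A few small repairs are needed:
\begin{itemize}
\item For $d=3$, the Taylor term $\tfrac12 h''(x)(X-x)^2$ tends to $3\alpha^2/\gamma$, so it is $O(1)$, not $o(1)$.
\item The $y$-independence of the limit (which you only asserted ``to leading order'') follows for $d\ge 3$ because $y$ enters only through $1/(y+h(x))=h(x)^{-1}\bigl(1+O(x^{d-1})\bigr)$. The limiting second coordinates are $y-\alpha\gamma/\delta$, $y-\alpha(\alpha+\beta)/\gamma$ and, by the one-line exact computation you deferred, $\beta y/(\alpha+\beta)$ for $d=4,3,2$ respectively.
\item In (2), $c_1=\beta/(\alpha+\beta)$ can vanish, so the claim $c_1\neq0$ is false; this is harmless, since only $1-c_1=\alpha/(\alpha+\beta)\neq0$ is used.
\end{itemize}
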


\begin{proof}
Using homogeneous coordinates $\pcoor{x:y:z}$ on $\P^2$, we write
\begin{align*}
\phi&=\pcoor{x : y+ \frac{\delta z^4}{x^3} + \frac{\gamma z^3}{x^2} + \frac{\beta z^2}{x}:z}, &
\psi&=\pcoor{x+\frac{\alpha z^2}{y}:y:z} \\
\theta_1&=\pcoor{x : y - \frac{\delta z^4}{x^3} - \frac{\gamma z^3}{x^2} - \frac{(\beta+(d-1)\alpha) z^2}{x}:z}, &
\theta_2 &=\pcoor{x+\frac{(\alpha + \beta) z^2}{y}:y:z}.
\end{align*}
We compute $\theta_1\psi\phi$ (resp. $\theta_2\phi\psi$), replace $x=0$ (resp. $y = 0$), and check that we obtain $\pcoor{0:uy+vz:z}$ (resp. $\pcoor{ux+vz:0:z}$, where $(u,v)\in \k^2$ and $u \neq 0$.
This shows that $\theta_1$ (resp. $\theta_2$) satisfies Condition~\ref{Succ1} of Proposition~\ref{p:successor}.
\end{proof}

The following two lemmas apply to general Cremona maps, without assuming that a form is preserved. 

\begin{lemma}\label{TheFormulaOfSuccessor}
Let $\phi_1,\phi_2\in \Bir(\P^2)\setminus \Aut(\P^2)$, such that $\phi_1^{-1}$ and $\phi_2$ have no common base points. Assume that there exists $\phi_3\in \Bir(\P^2)$ whose base points are the images by $\phi_{2}$ of the base points of $\phi_{1}^{-1}$, with the same multiplicities.
Consider a resolution of the indeterminacies:
\[
\begin{tikzcd}
&&&Z\ar[dlll,"\eta_0" ',bend right=10]\ar[dl,"\eta_1" ']\ar[dr,"\eta_2"]\ar[drrr,"\eta_3",bend left=10]\\
\P^2 \ar[rr,dashed,"\phi_1"] && \P^2 \ar[rr,dashed,"\phi_2"]&& \P^2 \ar[rr,dashed,"\phi_3"] && \P^2
\end{tikzcd}
\]
Then, writing $d=\deg(\phi_1)=\deg(\phi_3)$, and defining $L\in \mathrm{Pic}(\P^2)$ to be a line of $\P^2$, we have
\[{\eta_3}^* L-{\eta_0}^* L=d\cdot ({\eta_2}^* L- {\eta_1}^* L).\]
\end{lemma}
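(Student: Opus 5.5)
The approach is to compute all four pullbacks to $Z$ in terms of the exceptional classes, using the standard description of a homaloidal system on a resolution. I first fix notation. Let $E_b$ denote the total transform on $Z$ of the exceptional curve of a point $b$ blown up by $\eta_1$. For $\phi_1$ of degree $d$, the base points $b_1,\dots,b_s$ of $\phi_1^{-1}$ have multiplicities $\mu_i$. Since $\phi_1^{-1}$ also has degree $d$, we get
\[
\eta_0^*L = d\,\eta_1^*L - \sum_i \mu_i E_{b_i}.
\]
This is the standard formula: a general line on the source of $\phi_1$ corresponds, through $\phi_1$, to a member of the homaloidal system of $\phi_1^{-1}$. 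It is valid on any resolution dominating the minimal one, since the extra exceptional curves do not appear in it. Similarly, if $\phi_2$ has degree $r$ and base points $c_j$ with multiplicities $\nu_j$, then
\[
\eta_2^*L = r\,\eta_1^*L - \sum_j \nu_j E_{c_j}.
\]

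Next I handle $\phi_3$. Its base points are $\phi_2(b_i)$ with multiplicities $\mu_i$. Because the $b_i$ are not base points of $\phi_2$, the map $\phi_2$ is a local isomorphism near them, including at infinitely near points. So the total transform of the exceptional curve of $\phi_2(b_i)$, computed via $\eta_2$, equals $E_{b_i}$ on $Z$. This gives
\[
\eta_3^*L = d\,\eta_2^*L - \sum_i \mu_i E_{b_i}.
\]
One should justify that $Z$ dominates a resolution of $\phi_3$ with these exceptional classes. I would take $Z$ dominating the minimal resolutions of all three maps, which is legitimate since the identity to prove is unchanged under further blowups by pullback. The formula is then the same standard computation, applied to $\phi_3$ from the middle-right $\P^2$ with the identification of the exceptional classes just described.

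Finally I subtract: $\eta_3^*L-\eta_0^*L = d\,\eta_2^*L - d\,\eta_1^*L$, because the $\sum_i \mu_i E_{b_i}$ terms cancel. The main obstacle is the identification of the exceptional classes of $\phi_3$ (pulled back via $\eta_2$) with those of $\phi_1^{-1}$ (pulled back via $\eta_1$). This requires the disjointness hypothesis, so that $\phi_2$ is an isomorphism on a neighbourhood of the $b_i$ and of all their infinitely near points. The identification also uses that $\phi_3$ has degree $d$, which follows from Noether's relations (\cref{l:noether_rel}) since the multiplicities are the same as those of $\phi_1^{-1}$.
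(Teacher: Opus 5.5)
Your proposal is correct and is the paper's argument: write $\eta_0^*L=d\,\eta_1^*L-R$ and $\eta_3^*L=d\,\eta_2^*L-R$ with the same class $R=\sum_i\mu_iE_{b_i}$, then subtract. One small correction: having no common base points does not by itself make $\phi_2$ a local isomorphism at the $b_i$, since a point that is not a base point can still lie on a curve contracted by $\phi_2$. The identification of the exceptional classes really comes from the hypothesis that the base points of $\phi_3$ are the images of the $b_i$, which is how the paper uses it.
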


\begin{proof}
We have ${\eta_0}^* L=d {\eta_1}^* L-R$, where $R\in \mathrm{Pic}(Z)$ is an effective divisor that is the sum of the pullbacks of the base points of $(\phi_1)^{-1}$ with multiplicities. By assumption, these are the same points, via $\phi_2$, as the base points of $\phi_3$, with the same multiplicities. We then obtain
$R=d {\eta_1}^* L-{\eta_0}^* L=d{\eta_2}^* L-{\eta_3}^* L$,
which gives the result.
\end{proof}

\begin{lemma}
\label{l:degree_sequence}
Let $\phi_1,\phi_2\in \Bir(\P^2)\setminus \Aut(\P^2)$, such that $\phi_1^{-1}$ and $\phi_2$ have no common base points. Assume that for each $i\ge 3$, there exists $\phi_i\in \Bir(\P^2)$ whose base points are the images by $\phi_{i-1}$ of the base points of $\phi_{i-2}^{-1}$, with the same multiplicities.

We write $d_i=\deg(\phi_i\dots \phi_1)$ for each $i\ge 1$, and by convention $d_0=1$. Then, for each $i\ge 3$ we have
\[d_i=\deg(\phi_j) (d_{i-1}-d_{i-2})+d_{i-3},\]
where $j\in \{1,2\}$ has the same parity as $i$.

In particular, writing $a=\deg(\phi_1)$ and $b=\deg(\phi_2)$, the sequence $(d_i)_{i\ge 0}$ is bounded if and only if
\[
(a,b) \in \{ (2,2), (2,3), (3,2), (2,4), (4,2)\}.
\]
In these cases, the sequence is in fact periodic:
\begin{itemize}
\item If $(a,b) = (2,2)$, $(d_i)_{i\ge 0} = (1,2,4,5,4,2,\dots)$ of period $6$.
\item If $(a,b) = (2,3)$, $(d_i)_{i\ge 0} = (1, 2, 6, 9, 11, 10, 6, 3,\dots)$ of period $8$.
\item If $(a,b) = (3,2)$, $(d_i)_{i\ge 0} = (1, 3, 6, 10, 11, 9, 6, 2,\dots)$ of period $8$.
\item If $(a,b) = (2,4)$, $(d_i)_{i\ge 0} = (1, 2, 8, 13, 22, 26, 29, 28, 22, 17, 8, 4,\dots)$ of period $12$.
\item If $(a,b) = (4,2)$, $(d_i)_{i\ge 0} = (1, 4, 8, 17, 22, 28, 29, 26, 22, 13, 8, 2,\dots)$ of period $12$.
\end{itemize}
\end{lemma}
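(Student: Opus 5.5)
The recursion will follow from \cref{TheFormulaOfSuccessor}, applied to each consecutive triple $\phi_{i-2},\phi_{i-1},\phi_i$ on a common resolution. Fix $i\ge 3$ and take a smooth surface $Z$ dominating all the maps $\phi_k\cdots\phi_1$ for $k\le i$. Let $\eta_k\colon Z\to\P^2$ be the morphism to the $k$-th copy of $\P^2$, so that $\phi_k\cdots\phi_1=\eta_k\eta_0^{-1}$. Set $H_k=\eta_k^*L\in\mathrm{Pic}(Z)$. The degree of $\phi_k\cdots\phi_1$ is then the intersection number $d_k=H_k\cdot H_0$, and the convention $d_0=1$ is just $H_0^2=1$.

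To apply \cref{TheFormulaOfSuccessor} to the triple $(\phi_{i-2},\phi_{i-1},\phi_i)$, I need two facts. First, $\phi_{i-2}^{-1}$ and $\phi_{i-1}$ have no common base points. For $i=3$ this is the hypothesis. For larger $i$ I will check it by induction: the base points of $\phi_{i-1}$ are the $\phi_{i-2}$-images of the base points of $\phi_{i-3}^{-1}$. By the induction hypothesis these are disjoint from the base points of $\phi_{i-2}^{-1}$, since they lie on the part of the source where $\phi_{i-2}^{-1}$ is a local isomorphism. Second, $\deg\phi_i=\deg\phi_{i-2}$. This holds because $\phi_i$ has the same homaloidal multiplicities as $\phi_{i-2}^{-1}$, and by Noether's relations (\cref{l:noether_rel}) the multiplicities determine the degree. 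Inductively, $\deg\phi_i$ equals $\deg\phi_1$ or $\deg\phi_2$ according to the parity of $i$.

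\cref{TheFormulaOfSuccessor} then gives
\[
H_i-H_{i-3}=\deg(\phi_j)\,(H_{i-1}-H_{i-2}).
\]
Intersecting with $H_0$ yields $d_i=\deg(\phi_j)(d_{i-1}-d_{i-2})+d_{i-3}$. When $i=3$ this uses $H_0\cdot H_0=1=d_0$.

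For the boundedness statement, I will pass to the two-step recursion. Write the characteristic data for two steps: set $v_i=(d_i,d_{i-1},d_{i-2})$; then $v_{i+2}=M v_i$, where $M=M_bM_a$ is a product of two $3\times 3$ matrices of determinant $1$. The combination $H_i-H_{i-1}$ suggests factoring out the eigenvalue $1$. Indeed $d_i-d_{i-3}=c(d_{i-1}-d_{i-2})$, and the characteristic polynomial of each one-step matrix is $x^3-cx^2+cx-1=(x-1)(x^2-(c-1)x+1)$. I expect the product $M$ to have characteristic polynomial $(x-1)(x^2-sx+1)$ with $s=(a-1)(b-1)-2$, or something similar. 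Boundedness then requires $|s|<2$, with $s=2$ giving unbounded linear growth. This is the step I expect to be the main obstacle: computing $s$ correctly and handling the degenerate eigenvalue cases. If the closed form for $s$ is messy, I would instead argue directly that the sequence grows when $ab$ is large, via a positivity and monotonicity argument. Once the right value of $s$ is found, the solutions $(a,b)$ with $a,b\ge 2$ are exactly the five listed pairs, which correspond to $s=-1,0,1$, that is to roots of unity of order $3,4,6$.

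Finally, the explicit periodic sequences come from iterating the recursion from $(d_0,d_1,d_2)=(1,a,ab-1)$. Here $d_2=ab-1$ because the base points are disjoint, so $\deg(\phi_2\phi_1)=ab-(\ldots)$; more simply, $d_2$ can be read off the recursion's derivation as $H_2\cdot H_0$. In each case the iteration returns to $(1,a,\ldots)$, giving the listed periods.
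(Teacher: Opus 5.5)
Your derivation of the recursion is the paper's argument: \cref{TheFormulaOfSuccessor} applied to consecutive triples on a common resolution, then intersected with $\eta_0^*L$. Your inductive check that $\phi_{i-2}^{-1}$ and $\phi_{i-1}$ share no base points, and that $\deg\phi_i=\deg\phi_{i-2}$, spells out hypotheses the paper uses without comment. Phrase the first check through the bijection that $\phi_{i-2}$ induces between points (proper or infinitely near) that are not base points of $\phi_{i-2}$ and those that are not base points of $\phi_{i-2}^{-1}$. The notion \emph{local isomorphism} fails at non-base points lying on contracted curves.

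The genuine error is in the initial data: $d_2=ab$, not $ab-1$. By the formula in the proof of \cref{PropDelta}, $\deg(\phi_2\phi_1)=\deg\phi_1^{-1}(\Lambda_{\phi_2})=ab-\sum_q m_q(\Lambda_{\phi_2})\,\mu_q$. Here $\Lambda_{\phi_2}$ is the homaloidal system of $\phi_2$ and $q$ runs over the base points of $\phi_1^{-1}$. Disjointness is exactly what makes every $m_q(\Lambda_{\phi_2})$ vanish, so there is no correction term. If you iterate from $(1,a,ab-1)$ with $(a,b)=(2,2)$, you get $(1,2,3,3,2,1,\dots)$ instead of $(1,2,4,5,4,2,\dots)$. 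So your last step would not reproduce the listed sequences. Your fallback of reading $d_2$ off as $H_2\cdot H_0$ is only the definition and gives no value.

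You also leave the characteristic polynomial as a guess, but it is immediate. Every one-step matrix fixes $(1,1,1)^T$ and has determinant $1$. Hence $M=M_bM_a$ has characteristic polynomial $(x-1)(x^2-sx+1)$ with $s=\mathrm{tr}(M)-1=ab-a-b-1=(a-1)(b-1)-2$, which is the paper's $\mu$ and confirms your guess. Since $a,b\ge 2$ gives $s\ge -1$, boundedness leaves $s\in\{-1,0,1\}$, i.e.\ the five pairs. In these cases $M$ is diagonalisable with eigenvalues $1$ and primitive $3$rd, $4$th or $6$th roots of unity, which gives the periods $6,8,12$.

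To turn the claim that $s\ge 2$ gives growth into a proof (the paper is equally terse here), suppose $(d_i)$ were bounded. Integrality and $\det M=1$ would then make $(v_{2k})_{k\ge 1}$ periodic. For $s\ge 2$ (eigenvalues $1,\lambda,\lambda^{-1}$ with $\lambda>1$, or $M$ unipotent) this forces $Mv_2=v_2$. That means $d_2=d_0=1$, contradicting $d_2=ab$.
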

\begin{proof}
We write $a=\deg(\phi_1)$, $b=\deg(\phi_2)$.
As $\phi_1^{-1}$ and $\phi_2$ have no common base points we have $\deg(\phi_2\phi_1)=ab$ and the base points of $\phi_2\phi_1$ are the base points of $\phi_1$ and the points sent by $\phi_1$ onto the base point of $\phi_2$. We denote by $\eta_0\colon Z\to \P^2$ the blow-up of the base points of $\phi_2\phi_1$. Hence, $\eta_2=\phi_2\phi_1\eta_0$ is a birational morphism. Moreover, the base points of $\phi_1$ are blown-up by $\eta_0$, so $\eta_1=\phi_1\eta_0$ is a birational morphism.

For each $i\ge 3$, the base points of $\phi_i$ are contained in the base points of $(\phi_{i-1}\dots \phi_1)^{-1}$ and thus $\eta_i=\phi_i\dots \phi_1\eta_0$ is a birational morphism from $Z$ to $\P^2$. This gives the following commutative diagram:
\[
\begin{tikzcd}
&& Z\ar[dll,"\eta_0" ',bend right=12]\ar[dl,"\eta_1" ']\ar[d,"\eta_2" ]\ar[dr,"\eta_3"]\ar[drrr,"\eta_i",bend left=10]\\
\ar[r,dashed,"\phi_1" ']\P^2 &
\ar[r,dashed,"\phi_2" '] \P^2 &
\ar[r,dashed,"\phi_3" ']\P^2 &
\ar[r,dashed,"\phi_4" '] \P^2 &
\ar[r,dashed,"\phi_i" ']  \cdots &
\ar[r,dashed,"\phi_{i+1}" '] \P^2 &
\cdots
\end{tikzcd}
\]

For each $i\ge 1$, the degree of $\phi_i\dots \phi_1$ is given by $d_i={\eta_i}^* L\cdot {\eta_0}^* L.$ We moreover have $d_1=a$, $d_2=ab$.

By \cref{TheFormulaOfSuccessor}, for $i\ge 2$ we get
\begin{align*}
{\eta_{2i-1}}^* L&= a({\eta_{2i-2}}^* L-{\eta_{2i-3}}^* L)+{\eta_{2i-4}}^* L, \\
{\eta_{2i}}^* L &= b({\eta_{2i-1}}^* L-{\eta_{2i-2}}^* L)+{\eta_{2i-3}}^* L.
\end{align*}
This gives
\begin{align*}
d_{2i-1}&= a(d_{2i-2}-d_{2i-3})+d_{2i-4}, \\
d_{2i} &= b (d_{2i-1}-d_{2i-2})+d_{2i-3},
\end{align*}
and yields the stated formula. To decide when it is bounded, we observe that for $i\ge 3$, we have
\[
\begin{psmallmatrix} d_i\\ d_{i-1} \\ d_{i-2}\end{psmallmatrix}=M_i\begin{psmallmatrix} d_{i-1}\\ d_{i-2} \\ d_{i-3}\end{psmallmatrix}\text{ where }M_i=\begin{psmallmatrix} \deg(\phi_j) & -\deg(\phi_j) & 1 \\ 1 & 0 & 0 \\ 0 & 1 & 0\end{psmallmatrix},
\]
and $j\in \{1,2\}$ has the same parity as $i$. The characteristic polynomial of $M_2M_1$ is $(X-1)(X^2-\mu X+1)$ where $\mu=ab-a-b-1=(a-1)(b-1)-2\ge -1$. If $\mu=2$, then $1$ is the only eigenvalue and as $M_2M_1$ is not the identity, the growth of $(d_i)_{i\ge 1}$ is linear. If $\mu \ge 3$, there is a real eigenvalue $\lambda>1$ and an eigenvalue $\frac{1}{\lambda}<1$, the growth of $(d_i)_{i\ge 1}$ is then exponential.
If $\mu<2$, we only have $(a,b) \in \{ (2,2), (2,3), (3,2), (2,4), (4,2)\}$, and the explicit computation of the sequence in each case is direct, which shows that the sequence $(d_i)_{i\ge 1}$ is bounded in these cases.
\end{proof}

\begin{lemma}
\label{l:simple_relation}
Let $\phi_1,\phi_2\in \Jonq(3L)$ be two simple maps of degree $2$ and $d\in \{2,3,4\}$ respectively.
Assume that there is no member of the linear system of $\phi_2$ passing through the three base points of $\phi_1^{-1}$.
If $d=4$, assume moreover that there is no quintic having multiplicity $3$ at the maximal base point of $\phi_2$, passing through all base points of $\phi_2$, and having multiplicity $2$ at the three base points of $\phi_1^{-1}$.

Then for $3 \le i \le n=n_d$, where $n_2=6$, $n_3=8$, $n_4=12$, we can choose a successor $\phi_i$ of $\phi_{i-2},\phi_{i-1}$ such that
\[
\phi_n\dots \phi_1=\id.
\]
Moreover, any such relation holds in the group $\lb \Jonq(3L) \mid R \rb$ from \cref{t:main_bis}.
\end{lemma}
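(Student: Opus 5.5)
We reduce to explicit normal forms, using \cref{l:successor} to compute the successors, and then deduce the relation in $\lb \Jonq(3L)\mid R\rb$ from \cref{l:relation_toolboxNew} and \cref{Rem:JonqProdRel}.

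\emph{Step 1: normal form.} The maximal base points of $\phi_1^{-1}$ and $\phi_2$ are distinct points of $L$. Up to composing with elements of $A$ (which only changes each $\phi_i$ by affine factors, harmless by \cref{Rem:JonqProdRel} and the relations of length $\le 3$), we may assume that they are $\pcoor{0:1:0}$ and $\pcoor{1:0:0}$. We may further assume that the contracted lines are $\{y=0\}$ and $\{x=0\}$. A simple map of degree $r$ with maximal point $\pcoor{0:1:0}$ preserving the pencil $x=$ constant and contracting $x=0$ can be written, up to affine maps, as $(x,y+P(1/x))$ with $P$ of degree $r-1$ without constant term. This is a consequence of \cref{l:jonq_preserving}, since the map restricts to an automorphism of $\P^2\setminus\{x=0\}$. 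So we may take $\phi_1=\psi=(x+\alpha/y,y)$ and $\phi_2=\phi=(x,y+\delta/x^3+\gamma/x^2+\beta/x)$, possibly after swapping the roles via inversion.

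\emph{Step 2: the successors.} \cref{l:successor} gives explicit successors $\theta_1,\theta_2$ alternating between the two normal forms, again up to affine maps. I iterate, tracking how the coefficients $(\alpha,\beta,\gamma,\delta)$ transform. The hypotheses exclude the degenerate cases $\alpha+\beta=0$, and for $d=4$ the collinearity-type degeneracy. These hypotheses are exactly what is needed: the existence of a successor at each step amounts to a non-vanishing condition. The linear-system and quintic conditions should translate into the non-vanishing of the relevant coefficient at steps $3,\dots,n$.

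\emph{Step 3: the product is the identity.} By \cref{l:degree_sequence}, the degrees $d_i=\deg(\phi_i\cdots\phi_1)$ are periodic with period $n_d$, and $d_{n}=d_0=1$. Hence $\phi_n\cdots\phi_1\in A$, and we absorb this affine map into the choice of $\phi_n$, which is allowed since successors are unique only up to $A$. This proves $\phi_n\cdots\phi_1=\id$. Any other choice of successors differs by affine factors, which can be moved along using relations of length $\le 3$ of the form $a\theta=\theta'$ and $\theta a=a'\theta''$. So all such relations are equivalent modulo $R$ to one normalised relation.

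\emph{Step 4: the relation holds in $\lb \Jonq(3L)\mid R\rb$.} This is the main obstacle. For $d=2$ I would conjugate by suitable affine maps to reduce to the maps $\zeta_\alpha$ and $\chi_\alpha$. Note that $(x+\alpha/y,y)$ is conjugate to $\zeta_\alpha$ by $(y,x)$, and the hexagon then collapses using $\chi_\alpha\zeta_\alpha\chi_{-\alpha}=\zeta_\alpha$ from \cref{l:relation_toolboxNew}. Here the degree-$4$ map $\chi_\alpha$ arises as a product of two consecutive simple maps sharing a pencil, which is legitimate by \cref{Rem:JonqProdRel}.

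For $d=3,4$ I would try to group consecutive factors $\phi_{i+1}\phi_i$ that preserve a common pencil through a point of $L$ into single Jonquières maps (again \cref{Rem:JonqProdRel}). This shortens the word until it becomes a conjugate of the defining relation $(\chi_1\nu)^2$ or a product of the $d=2$ relations. Alternatively, I would write $\phi=(x,y+\delta/x^3+\gamma/x^2+\beta/x)$ as a product of degree-$2$ simple maps sharing the pencil $x=$ constant. The $8$- and $12$-gons then decompose into hexagons of the $d=2$ case glued along length-$\le 3$ relations. Verifying that this tiling works for generic coefficients is the delicate computation I expect to dominate.
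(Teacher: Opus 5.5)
Your Steps 1 and 3 are fine and follow the paper's outline. Absorbing the final affine map into $\phi_n$ is a legitimate substitute for the paper's explicit check.

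Step 2, however, is only asserted. Use the normalisation $\phi_{2i-1}=(x,y+\alpha_i/x)$, $\phi_{2i}=(x+\beta_i/y+\gamma_i/y^2+\delta_i/y^3,y)$ (your $x,y$ swapped). Then \cref{l:successor} gives $\alpha_{i+1}=-(\alpha_i+\beta_i)$, $\beta_{i+1}=(d-1)\alpha_i+(d-2)\beta_i$, $\gamma_{i+1}=-\gamma_i$ and $\delta_{i+1}=-\delta_i$. You must then check that existence of all successors reduces to $\alpha_1+\beta_1\neq 0$, plus $2\alpha_1+\beta_1\neq0$ if $d=4$. The first condition is the linear-system hypothesis. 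The second is needed for $\phi_5$, and it is what the quintic hypothesis encodes: take preimages under $\phi_3$ and then $\phi_2$ of a member of the system of $\phi_4$ through the base points of $\phi_3^{-1}$. It is not a collinearity degeneracy, which requires $\gamma=\delta=0$.

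The genuine gap is Step 4, which is the actual content of the ``moreover'' part. Neither of your routes for $d\in\{3,4\}$ can work.
\begin{enumerate}
\item Consecutive factors never preserve a common pencil: odd ones preserve only the lines $x=c$, even ones only $y=c$. So there is nothing to group.
\item A simple quadratic map preserving the lines $x=c$ must send the member $L$ to itself. Hence it has the form $(ax+b,\,cy+e/(x-x_0)+\ell(x))$ with $\deg \ell\le 1$. Products of such maps have only simple poles in $x$, so they never produce $(x,y+\delta/x^3+\gamma/x^2+\beta/x)$ with $(\gamma,\delta)\neq(0,0)$.
\end{enumerate}
For $d=2$ you name the right ingredient but not how it acts. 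Also, $\chi_\alpha$ contracts $L$, so it is not a product of simple maps at all.

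The missing mechanism, which is uniform in $d$, is to replace every odd factor $\phi_{2i-1}=\zeta_{\alpha_i}$ by $\chi_{\alpha_i}\zeta_{\alpha_i}\chi_{-\alpha_i}$ (\cref{l:relation_toolboxNew}) and absorb the inserted maps into the even neighbours, reading the relation cyclically. Since $\chi_{-\alpha_{i+1}}=\chi_{\alpha_i+\beta_i}$, one gets
\[
\phi_{2i}':=\chi_{\alpha_i+\beta_i}\,\phi_{2i}\,\chi_{\alpha_i}=(x-\gamma_i-\delta_i y,\;y)\in A .
\]
This is a relation among maps preserving the pencil through $\pcoor{1:0:0}$, hence valid in $\lb \Jonq(3L)\mid R\rb$ by \cref{Rem:JonqProdRel}.
\begin{enumerate}
\item If $d\le 3$, then $\delta_i=0$, every factor of the new word preserves the lines $x=c$, and \cref{Rem:JonqProdRel} concludes.
\item If $d=4$, the $\phi'_{2i}$ alternate with their inverses. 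So the triples $\phi'_{2i}\phi_{2i-1}\phi'_{2i-2}$ ($i=2,4,6$) are affine conjugates of $\zeta_{\alpha_i}$, and the $12$-term word becomes a $6$-term one. Conjugating by $\nu=(x,\,y-x/\delta_1+\gamma_1/\delta_1)$ then turns it into a relation of the $d=2$ shape.
\end{enumerate}
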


\begin{proof}

The base points of $\phi_1^{-1}$ and $\phi_2$ are two simple sequences of points that we denote respectively by
\[
p\leftarrow p_1 \leftarrow p_2
\quad \text{ and } \quad
q\leftarrow q_1 \leftarrow q_{2}\leftarrow \dots \leftarrow q_{2d-2},
\]
where by assumption $p \neq q$.
We may replace $\phi_1$ and $\phi_2$ with $\kappa\phi_1$ and $\phi_2\kappa^{-1}$ for some $\kappa\in A$. Doing this, we may choose
\[
p=\pcoor{0:1:0} \leftarrow p_1=\{x=0\}, \quad q=\pcoor{1:0:0}\leftarrow q_1=\{y=0\},
\]
where we give the direction of an irreducible curve to specify an infinitely near point.

Replacing $\phi_1$ and $\phi_2$ with $\phi_1\tau$ and $\rho\phi_2$ for some $\tau,\rho\in A$, we obtain
\[
\phi_1=\left(x,y+\frac{\alpha_1}{x}\right), \quad
\phi_2=\left(x+\frac{\beta_1}{y}+\frac{\gamma_1}{y^2}+\frac{\delta_1}{y^3},y\right),
\]
for some $\alpha_1,\beta_1,\gamma_1,\delta_1 \in \k$.
We now apply \cref{l:successor} to calculate the successors
\[
\phi_{2i-1}=\left(x,y+\frac{\alpha_i}{x}\right)\text{ and }
\phi_{2i}=\left(x+\frac{\beta_i}{y}+\frac{\gamma_i}{y^2}+\frac{\delta_i}{y^3},y\right)
\] for each $i\in \{0,\ldots,\frac{n}{2}\}$. For each $i\ge 1$ we find
\[
\gamma_i=(-1)^{i+1}\gamma_1,\quad \delta_i=(-1)^{i+1}\delta_1, \quad \begin{psmallmatrix}
\alpha_i\\
\beta_i\end{psmallmatrix}=\begin{psmallmatrix}
-1 & -1\\
d-1& d-2\end{psmallmatrix}\begin{psmallmatrix}
\alpha_{i-1}\\
\beta_{i-1}\end{psmallmatrix},\]
which yields
\[
\begin{tabular}{CCC}
\toprule
d=2& d=3 & d=4\\
\midrule
\begin{psmallmatrix}
\alpha_{1}\\
\beta_{1}\\
\alpha_{2}\\
\beta_{2}\\
\alpha_{3}\\
\beta_{3}
\end{psmallmatrix}=
\begin{psmallmatrix*}[r]
1 &0\\
0 & 1\\
-1 & -1\\
1& 0\\
0 & 1\\
-1 & -1\end{psmallmatrix*}\begin{psmallmatrix}
\alpha_{1}\\
\beta_{1}\end{psmallmatrix} & \begin{psmallmatrix}
\alpha_{1}\\
\beta_{1}\\
\alpha_{2}\\
\beta_{2}\\
\alpha_{3}\\
\beta_{3}\\
\alpha_{4}\\
\beta_{4}
\end{psmallmatrix}=
\begin{psmallmatrix*}[r]
1 &0\\
0 & 1\\
-1 & -1\\
2& 1\\
-1& 0\\
0 & -1\\
1 & 1\\
-2 & -1\end{psmallmatrix*}\begin{psmallmatrix}
\alpha_{1}\\
\beta_{1}\end{psmallmatrix}&\begin{psmallmatrix}
\alpha_{1}\\
\beta_{1}\\
\alpha_{2}\\
\beta_{2}\\
\alpha_{3}\\
\beta_{3}\\
\alpha_{4}\\
\beta_{4}\\
\alpha_{5}\\
\beta_{5}\\
\alpha_{6}\\
\beta_{6}
\end{psmallmatrix}=
\begin{psmallmatrix*}[r]
1 &0\\
0 & 1\\
-1 & -1\\
3& 2\\
-2& -1\\
3 & 1\\
-1 & 0\\
0 & -1\\
1 & 1\\
-3 & -2 \\
2 & 1\\
-3&-1
\end{psmallmatrix*}\begin{psmallmatrix}
\alpha_{1}\\
\beta_{1}\end{psmallmatrix}\\
\bottomrule\\
\end{tabular}
\]

To see that the successors exist, we need to check that $-\alpha_{i+1}=\alpha_i+\beta_i$ is not zero for each $i$, and if $d=2$, we also need that $-\beta_{i+1}=\beta_i+\alpha_{i+1}$ is not zero (see \cref{l:successor}). For this, we need $\alpha_1+\beta_1\not=0$ in each case, and $2\alpha_1+\beta_1\not=0$ in the case $d=4$.
We now prove that this corresponds to the geometric hypothesis we requested.

As $p_2$ is a base point of $\phi_1^{-1}=(x,y-\frac{\alpha_1}{x})$, it corresponds to the direction of the conic with equation $xy-\alpha_1 z^2$.

The unique element of the linear system of $\phi_2$ passing through $p$ and $p_1$ is $y^{d-1}(x+\frac{\beta_1z^2}{y}+\frac{\gamma_1z^3}{y^2}+\frac{\delta_1z^4}{y^3})=0$. It then passes through $p_2$ if and only if $\alpha_1+\beta_1=0$. We thus obtain $\alpha_1+\beta_1\not=0$.

In the case $d=4$, the fact that $\alpha_1+\beta_1\not=0$ yields the existence of $\phi_3=(x,y+\frac{\alpha_2}{x})$.
Then, $\phi_4=(x+\frac{\beta_2}{y}+\frac{\gamma_2}{y^2}+\frac{\delta_2}{y^3},y)$ is a successor.
The existence of the successor $\phi_5$ of $\phi_3$ and $\phi_4$ is then equivalent to the condition $\alpha_2+\beta_2\not=0$, and thus to $2\alpha_1+\beta_1\not=0$.
As we saw before, if $\alpha_2+\beta_2=0$, there exists an element of the linear system of $\phi_4$ passing through the base points of $\phi_3^{-1}$.
The preimage by $\phi_3$ of such a curve will be a quintic passing through the base points of $\phi_3$ with multiplicity $2$, passing through all the base points of $\phi_2^{-1}$, and with multiplicity $3$ at the proper base point.
The preimage by $\phi_2$ is then a quintic having multiplicity $3$ at the maximal base point of $\phi_2$, passing through all base points of $\phi_2$, and having multiplicity $2$ at the three base points of $\phi_1^{-1}$. By assumption, such a curve does not exist, so $2\alpha_1+\beta_1\not=0$.

We know from \cref{l:degree_sequence} that $\phi_n \dots \phi_1$ has degree $1$, and thus corresponds to an element of $A$. Actually, we can replace the $\phi_i$ with the formulas above and check that it is the identity.
It remains to see that the relation
\[\phi_n \dots \phi_1=\id_{\P^2}\]
holds in $\lb \Jonq(3L) \mid R \rb$.

To do that, we insert some $\chi_{\pm\alpha_i}$ before and after each $\phi_{2i-1}=\zeta_{\alpha_i}$: we use the relation $\chi_{\alpha_i}\zeta_{\alpha_i}\chi_{-\alpha_i}=\zeta_{\alpha_i}$ (\cref{l:relation_toolboxNew}), and then insert $\chi_{\alpha_i}$ before $\phi_{2i}$ and  $\chi_{-\alpha_i}$ after $\phi_{2i-2}$ (if $i=1$, this insert $\chi_{-\alpha_1}$ after $\phi_n$).

The $\phi_j$ with $j$ odd do not change. The $\phi_{2i}=(x+\frac{\beta_i}{y}+\frac{\gamma_i}{y^2}+\frac{\delta_i}{y^3},y)$ are replaced with
\[\phi_{2i}'=\chi_{\alpha_i+\beta_i} \phi_{2i} \chi_{\alpha_i}=\left(x-\gamma_i-\delta_i y,y \right)\in A,\]
and this equality holds in $\lb \Jonq(3L) \mid R \rb$, as all maps preserve the pencil through $\pcoor{1:0:0}$ (\cref{Rem:JonqProdRel}). The new relation is then
\[ \phi_{n}'\phi_{n-1}\dots \phi_2'\phi_1=\id.\]

If $d\le 3$, the $\phi_{2i}'$ (and the $\phi_{2i-1}$) are all preserving the pencil of lines through $\pcoor{0:1:0}$ (lines $x=\mathrm{cst}$),
so $\phi_{n}'\phi_{n-1}\dots \phi_2'\phi_1=\id$ holds in $\lb \Jonq(3L) \mid R \rb$ (\cref{Rem:JonqProdRel}).

Suppose now that $d=4$, hence $n=12$. For each $i\in \{2,4,6\}$, we consider
\[
\theta_i = \phi_{2i}' \phi_{2i -1}\phi_{2i-2}'=\phi_{2i}' \phi_{2i -1}\phi_{2i}'^{-1}=(x+\gamma_1+\delta_1 y,y)\left(x,y+\frac{\alpha_i}{x}\right)(x-\gamma_1-\delta_1 y,y),
\]
which is a simple Jonquières element of degree $2$ conjugate by an element of $A$, and thus is still a simple Jonquières element of degree $2$. Moreover, the above relation is generated by trivial relations of length $3$. Our relation is then equivalent to the relation
\[ \theta_6\phi_9\theta_4\phi_5\theta_2\phi_1=\id.\]
We now conjugate this relation with $\nu=(x,y-x/\delta_1+\gamma_1/\delta_1)\in A$. This replaces $\phi_i$ $(i\in \{1,5,9\})$ and $\theta_i$ $(i\in \{2,4,6\}$) with
\[
\nu \phi_i \nu^{-1}=\left( x,y+\frac{\alpha_i}{x} \right),
\quad
\nu \theta_i\nu^{-1}=\left(x-\frac{\alpha_i }{y},y\right).
\]
This corresponds to a relation as in the case $d=2$.
\end{proof}

\section{Decreasing the degrees of linear systems using relations}
\label{Sec:ProofBis}
In this section we study the situation where two distinct Jonquières maps decrease the degree of a given homaloidal system.
This is the last needed ingredient for the proof of the presentation of \cref{t:main_bis}.

\subsection{Peak configurations}

\begin{definition}
\label{d:peak}
Let $\Lambda$ be a homaloidal linear system on $\P^2$, and let $\phi, \phi' \in \Jonq(3L)\setminus A$.
We say that $(\phi, \phi', \Lambda)$ is a \emph{peak configuration} if \[\deg(\Lambda) \ge \deg(\phi(\Lambda)), \quad\deg(\Lambda) \ge \deg(\phi'(\Lambda)),\] with at least one of these inequalities being strict.

We say that $(\phi, \phi', \Lambda)$ fits into a \emph{degree lowering relation of length $n\ge 2$} if there exists $\phi_1,\ldots,\phi_n\in \Jonq(3L)$ such that $\phi_n\dots \phi_1=\id$ holds in the group $\lb \Jonq(3L) \mid R \rb$ from \cref{t:main_bis}, such that $\phi=\phi_1, \phi'=(\phi_n)^{-1}$ and for each $i \in \{ 2, \dots, n-2\}$
\[
 \deg (\phi_i \dots \phi_1 (\Lambda)) < \deg (\Lambda).
\]
\[
\begin{tikzcd}[dashed]
&& \Lambda \ar[drr,"\phi_n^{-1} = \phi'"] \ar[dll,"\phi = \phi_1" ']\\
\phi(\Lambda) \ar[d,"\phi_2"] &&&& \phi'(\Lambda) \\
\phi_2\phi_1(\Lambda) \ar[r,"\phi_3" '] &
\cdots \ar[r,"\phi_i" '] &
\phi_i\dots\phi_1(\Lambda) \ar[r,"\phi_{i+1}" ']&
\cdots \ar[r,"\phi_{n-2}" '] &
\phi_{n-2}\dots\phi_1(\Lambda) \ar[u,"\phi_{n-1}"]
\end{tikzcd}
\]

\end{definition}
\begin{remark}\label{Rem:ExchangePhiPhiprim}
Let $(\phi, \phi', \Lambda)$ be a peak configuration. Then, $(\phi', \phi, \Lambda)$ is again a peak configuration. Moreover, the first one fits into a degree lowering relation if and only if the second one does. We may thus often exchange $\phi$ and $\phi'$.
\end{remark}

Peak configurations will be used to show that we have indeed found all the relations in $\Bir (\P^2, 3L)$ in \S \ref{s:volume preserving-jonquieres-maps}.
We now prove a series of lemmas that will be useful in the proof of \cref{t:main_bis}.
Peak configurations are in fact quite restrained and are related to the relations in $R$.

\begin{lemma}\label{l:peak_tail}
Let $(\phi, \phi', \Lambda)$ be a peak configuration.
Up to exchanging $\phi$ and $\phi'$, we assume that $\phi$ is of degree $4$ with a tail
\parent{Case~\ref{BasePtsJonq2} of \cref{Prop:BasePtsJonq}}, and that
$\phi'$ is of degree $2$ with base points at the three points $t,t_1,t_2$ of the tail of $\phi$.
Then, $(\phi, \phi', \Lambda)$ fits into a degree lowering relation of length $4$.
\end{lemma}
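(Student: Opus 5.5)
First normalise by affine maps. Since $\phi'$ is quadratic with base points $t\leftarrow t_1\leftarrow t_2$, these three points are not collinear. By \cref{Lem:Tail4unique} there are $\rho,\kappa\in A$ with $\rho\phi\kappa=\chi_1=(-xy^2+y,\tfrac1y)$. Replacing $\Lambda$ by $\kappa^{-1}(\Lambda)$, $\phi$ by $\rho\phi\kappa$ and $\phi'$ by $\rho'\phi'\kappa$ for a suitable $\rho'\in A$ preserves degrees, peak configurations and degree lowering relations. Relations of length $\le 3$ that only insert affine maps lie in $R$, so we may assume $\phi=\chi_1$. The tail of $\chi_1$ is $t=\pcoor{0:1:0}\leftarrow t_1=\{x=0\}\leftarrow t_2$, with $t_2$ the direction of $xy=z^2$. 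These are exactly the base points of $\zeta_{\pm1}=(x,y\pm\frac1x)$; one of them, or equivalently $\nu=(x,-y+\tfrac1x)=\zeta_1\epsilon$ up to an affine map, has them as base points. A quadratic map is determined by its base points up to post-composition with $\Aut(\P^2)$, and here with $A$ since $L$ is preserved. So after a further affine normalisation we may assume $\phi'^{-1}$ equals $\nu$ up to an element of $A$ absorbed into the relation.

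The candidate relation is then the relation $(\chi_1\nu)^2=\id$ of \cref{t:main_bis}, written as $\phi_4\phi_3\phi_2\phi_1=\id$ with
\[
\phi_1=\chi_1,\quad \phi_2=\nu,\quad \phi_3=\chi_1,\quad \phi_4=\nu,
\]
adjusted by affine factors so that $\phi_4^{-1}=\phi'$ and $\phi_1=\phi$. I would double-check which of $\nu,\nu^{-1}=\nu$ (an involution? $\nu^2=(x,y)$ indeed) fits, so the orientation should be harmless. This relation lies in $\lb \Jonq(3L)\mid R\rb$ by definition. It has length $4$, and the only degree condition from \cref{d:peak} is at $i=2$:
\[
\deg(\nu\chi_1(\Lambda))<\deg(\Lambda).
\]

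The main step, and the expected obstacle, is verifying this inequality. Write $D=\deg\Lambda$ and $m_u$ for the multiplicities. By \cref{PropDelta} applied to $\phi'=\nu$ with base points $t,t_1,t_2$,
\[
D-\deg\phi'(\Lambda)=m_t+m_{t_1}+m_{t_2}-D\ge 0,
\]
and applied to $\chi_1$ with base points $p\leftarrow p_1\leftarrow p_2\leftarrow p_3$ (proximate to $p$) and the tail,
\[
D-\deg\chi_1(\Lambda)=\sum_i m_{p_i}+m_t+m_{t_1}+m_{t_2}-3(D-m_p)\ge 0,
\]
with one inequality strict. Note that $\nu\chi_1=\phi_3^{-1}\phi_4^{-1}=\chi_1^{-1}\nu$ up to affine maps, so $\deg(\nu\chi_1\Lambda)=\deg(\chi_{-1}\nu\Lambda)$. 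Equivalently, I would compute the image $\chi_1\nu(\Lambda')$ with $\Lambda'=\nu(\Lambda)$ of degree $D'=2D-m_t-m_{t_1}-m_{t_2}\le D$.

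The multiplicities of $\Lambda'$ at the base points of $\chi_{-1}$ are computable. The base points of $\nu^{-1}$ acquire multiplicities $D-m_{t_1}-m_{t_2}$, etc., and $p,p_i$ are moved to the corresponding chain by $\nu$. The tail of $\chi_{\pm1}$ sits at the base points of $\nu^{-1}$, while $p$ is fixed. I would then apply \cref{PropDelta} again. The resulting difference $D-\deg(\chi_{-1}\nu\Lambda)$ is a linear combination of the two displayed nonnegative quantities with positive coefficients, e.g. their sum, which is then strictly positive. The bookkeeping of how $\nu$ transforms the chain $p\leftarrow p_1\leftarrow p_2\leftarrow p_3$ is the delicate part, since the line $\{x=0\}$ contracted by $\nu$ passes through $p$'s pencil structure. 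I would handle it explicitly in coordinates using the formulas for $\chi_1$ and $\nu$, tracking the base points of the composition $\nu\chi_1$, which has degree computable from the periodic sequence of \cref{l:degree_sequence}-type arguments.
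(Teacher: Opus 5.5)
Your reduction matches the paper's. You normalise $\phi=\chi_1$ via \cref{Lem:Tail4unique}, identify $\phi'$ with $\nu=(x,-y)\circ\zeta_{-1}$ up to an element of $A$, and use the cyclically rotated relator $(\nu\chi_1)^2$. You also correctly note that the only condition to check is $\deg(\nu\chi_1(\Lambda))<\deg\Lambda$. But that inequality is the only nontrivial content of the lemma, and you do not prove it. You assert that $D-\deg(\nu\chi_1(\Lambda))$ is ``e.g.\ their sum'' and defer the bookkeeping.

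The computation you set up also uses the wrong map. A direct check gives $\chi_\alpha\chi_\beta=(x+\frac{\alpha-\beta}{y},y)$, so $\chi_1^{-1}=\chi_1$ and in fact $\nu\chi_1=\chi_1\nu$. The identity $\chi_{-\alpha}\chi_\alpha=\id$ printed in \cref{l:relation_toolboxNew} is a misprint for $\chi_\alpha\chi_\alpha=\id$ outside characteristic $2$. By contrast, $\chi_{-1}=(x-\frac2y,y)\circ\chi_1$ is not affinely equivalent to $\chi_1$, and its tail ends at the direction of $xy+z^2=0$, which is not a base point of $\nu$. So $\deg(\nu\chi_1(\Lambda))=\deg(\chi_{-1}\nu(\Lambda))$ fails in general. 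Likewise, ``the tail of $\chi_{\pm1}$ sits at the base points of $\nu^{-1}$'' is true only for $\chi_1$. The same sign slip appears when you say $t,t_1,t_2$ are the base points of $\zeta_{\pm1}$: only $\zeta_{-1}$ and $\nu$ have them.

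The missing step is immediate once you look at the composite itself rather than at $\nu(\Lambda)$. The map $\nu\chi_1=(y(1-xy),\frac{x}{1-xy})$ has degree $5$; the paper uses $\zeta_{-1}\chi_1$, which differs from it by the linear map $(x,-y)$. Its base points are those of $\chi_1$, with multiplicity $3$ at $p$, $1$ at $p_1,p_2,p_3$, and $2$ at $t,t_1,t_2$. The formula $\deg f(\Lambda)=D\delta-\sum m_i\mu_i$ from the proof of \cref{PropDelta} then gives
\[
D-\deg(\nu\chi_1(\Lambda))=\bigl(D-\deg\phi(\Lambda)\bigr)+\bigl(D-\deg\phi'(\Lambda)\bigr)>0.
\]
Your route through $\Lambda'=\nu(\Lambda)$ can also be made to work, but only with $\chi_1$ in place of $\chi_{-1}$. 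You would also have to prove that $\nu$ fixes the chain $p\leftarrow p_1\leftarrow p_2\leftarrow p_3$, and that the multiplicities of $\nu(\Lambda)$ at $t,t_1,t_2$ sum to $3D-2T$, where $T=m_t+m_{t_1}+m_{t_2}$. Then $\deg\chi_1(\nu(\Lambda))=4(2D-T)-3m_p-\sum_i m_{p_i}-(3D-2T)$, which is the same quantity.
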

\begin{proof}
By \cref{Lem:Tail4unique}, there exist $\rho,\kappa\in A$ such that
$\rho\phi\kappa=\chi_1=(-xy^2+y,\frac{1}{y})$.
We may thus replace $(\phi, \phi', \Lambda)$ with $(\rho\phi\kappa,\phi'\kappa, \kappa^{-1}(\Lambda))$ and assume that
\[
\phi=\chi_1=\pcoor{(-xy+z^2)y^2:z^4:yz^3}=(-xy^2+y,1/y).
\]
This being done, the base points of $\phi'$ are those of
\[\zeta_{-1}=\pcoor{x^2:xy-z^2:xz}=(x , y-1/x).\]
There exists thus $\rho'\in A$ such that $\rho'\phi'=\zeta_{-1}$. We replace $\phi'$ with $\zeta_{-1}$ and have reduced to the case where $\phi=\chi_1, \phi'=\zeta_{-1}$. 
We then use the relation
\[\zeta_{1}\chi_{-1}\zeta_{-1}\chi_1=\id.
\]
from \cref{l:relation_toolboxNew} (with $\alpha=-1$).
To see that this is a degree lowering relation of length $4$, we need to show that \[\deg(\zeta_{-1}\chi_1(\Lambda))<\deg(\Lambda).\]
The base points of $\phi=\chi_1$ are
\[
t=\pcoor{0:1:0} \leftarrow t_1\leftarrow t_2, \quad p=\pcoor{1:0:0}\leftarrow p_1\leftarrow p_{2}\leftarrow p_3,
\]
and $p$ it the base point of multiplicity $3$. Moreover, the base points of $\phi'$ are $t,t_1,t_2$.
Writing $d=\deg(\Lambda)$, by \cref{PropDelta} we obtain:
\begin{align*}
\deg(\phi(\Lambda))-d&=3(d-m_p(\Lambda))-\sum\nolimits_{i=1}^2 m_{p_i}(\Lambda)-m_t(\Lambda)-\sum\nolimits_{i=1}^3 m_{t_i}(\Lambda),\\
\deg(\phi'(\Lambda))-d&=(d-m_t(\Lambda))-\sum\nolimits_{i=1}^2 m_{t_i}(\Lambda).
\end{align*}
The base points of
\[
\zeta_{-1}\chi_1 = \left(y(1-xy),\frac{x}{xy-1} \right)=
\pcoor{-y(xy-z^2)^2:xz^4:(xy-z^2)z^3}
\] are the same as those of $\phi$, but with $t, t_1,t_2$ of multiplicity $2$. This gives
\begin{align*}
\deg(\zeta_{-1}\chi_1(\Lambda))&=\deg(\phi(\Lambda))+ \deg(\phi'(\Lambda))-d\\
&=d+(\deg(\phi(\Lambda))-d)+(\deg(\phi'(\Lambda))-d)<d.
\qedhere
\end{align*}
\end{proof}

\begin{lemma}
\label{l:simple_peak}
Let $(\phi, \phi', \Lambda)$ be a peak configuration,
where $\phi$ and $\phi'$ are simple maps with distinct proper points.
Up to exchanging $\phi$ and $\phi'$, we assume that $r=\deg(\phi) \in \{2,3,4\}$, and $\deg(\phi')=2$.
Then, $(\phi, \phi', \Lambda)$ fits into a degree lowering relation of length $n_r$, with $n_2=6, n_3=8, n_4=12$.
This relation is obtained by taking successors as in \cref{l:simple_relation}.
\end{lemma}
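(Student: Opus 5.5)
We obtain the relation from \cref{l:simple_relation} after a cyclic shift, so that it begins with $\phi$ and ends with $\phi'^{-1}$. Set $\psi_1=\phi'^{-1}$, which is simple of degree $2$, and $\psi_2=\phi$, which is simple of degree $r$. The proper base point of $\psi_1^{-1}=\phi'$ differs from that of $\psi_2=\phi$. Suppose the hypotheses of \cref{l:simple_relation} hold for $(\psi_1,\psi_2)$. Then we get successors $\psi_3,\dots,\psi_n$ with $\psi_n\cdots\psi_1=\id$ in $\lb \Jonq(3L)\mid R\rb$. Conjugating by $\psi_1$ gives $\psi_1\psi_n\cdots\psi_2=\id$. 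This is a relation whose first map is $\phi$ and whose last map is $\psi_1=\phi'^{-1}$, as \cref{d:peak} requires. Two things remain: the geometric hypotheses of \cref{l:simple_relation}, and the strict degree inequalities at the intermediate steps.

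\emph{Degree inequalities.} Write $D_0=\deg\phi'(\Lambda)$, $D_1=\deg\Lambda$, $D_2=\deg\phi(\Lambda)$, and $D_k=\deg(\psi_k\cdots\psi_2(\Lambda))$ for $k\ge 3$. The proof of \cref{TheFormulaOfSuccessor} is an identity in $\mathrm{Pic}(Z)$ on a common resolution. Intersecting it with the pullback of $\Lambda$ instead of a line gives the recurrence
\[
D_k=\deg(\psi_j)\,(D_{k-1}-D_{k-2})+D_{k-3},
\]
with the parity convention of \cref{l:degree_sequence}. Put $a=D_1-D_0\ge 0$ and $b=D_1-D_2\ge0$, at least one of them positive. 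Then every $D_k-D_1$ is an explicit integer linear combination of $a$ and $b$. For $r=2$ we get, for instance, $D_3=D_1-2b-a$, and so on. I would tabulate these combinations for $r=2,3,4$ (periods $6,8,12$) and check that at every intermediate index both coefficients are $\le 0$ and not both zero. This gives $D_k<D_1$. The check is a finite computation mirroring the tables in \cref{l:degree_sequence}.

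\emph{Geometric hypotheses.} Suppose a curve $C$ of the linear system of $\phi$ passes through the three base points $p,p_1,p_2$ of $\phi'$. Compare the intersection number $C\cdot\Lambda$ with the multiplicity lower bounds given by $b\ge0$ via \cref{PropDelta} for $\phi$, and by $a\ge0$ for $\phi'$. The extra incidences at $p,p_1,p_2$ force $C\cdot\Lambda>\deg C\cdot\deg\Lambda$, or force $a+b\le 0$ with one inequality strict. Either way this contradicts Bézout or the peak assumption, unless $C$ is a component of $\Lambda$, which is impossible since $\Lambda$ is homaloidal.

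Alternatively, in the setting of \cref{l:successor} a degenerate configuration makes the degree recurrence collapse ($\alpha_1+\beta_1=0$). Then some $D_k$ computed above would equal $D_1$ with both $a$ and $b$ forced to vanish. For $r=4$ the quintic condition is handled the same way: a quintic of multiplicity $3$ at the maximal point, through all base points of $\phi$ and doubly through $p,p_1,p_2$, has intersection with $\Lambda$ bounded below by $3m_q+\sum m_{q_i}+2(m_p+m_{p_1}+m_{p_2})$. This exceeds $5\deg\Lambda$ once $a,b\ge0$ with one strict.

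I expect the main obstacle to be this verification of the hypotheses, in particular the quintic in the case $r=4$. There the multiplicity bookkeeping must use that the $q_i$ are infinitely near with the prescribed proximities, and the inequality is tight.
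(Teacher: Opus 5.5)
Your proposal is correct and has the same architecture as the paper. You apply \cref{l:simple_relation} to the pair $(\phi'^{-1},\phi)$ and rotate the resulting relation so that it starts with $\phi$ and ends with $\phi'^{-1}$. You check the two hypotheses by Bézout: $(r-1)m_q+\sum m_{q_i}+m_p+m_{p_1}+m_{p_2}>r\deg\Lambda$, and $3m_q+\sum m_{q_i}+2(m_p+m_{p_1}+m_{p_2})>5\deg\Lambda$ when $r=4$. You then control the intermediate degrees with \cref{TheFormulaOfSuccessor}.

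Two small remarks on the hypotheses. The quintic case needs no proximity bookkeeping, and the inequality there is strict rather than tight. Your ``alternatively'' paragraph is not needed, and it is not meaningful as written, since when $\alpha_1+\beta_1=0$ the successors, and hence the recurrence, do not exist.

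The one real difference is the degree step. The paper writes $\eta_i^*L-\eta_0^*L=\alpha_i(\eta_1^*L-\eta_0^*L)+\beta_i(\eta_{n-1}^*L-\eta_0^*L)$. It proves $\alpha_i,\beta_i\ge 0$ by intersecting with exceptional divisors of base points of $\phi$ and of $\phi'$. It then rules out $\alpha_i=0$ or $\beta_i=0$ using Noether's equalities and \cref{l:degree_sequence}. You instead compute the coefficients explicitly from the numerical recurrence, intersecting with the strict transform of a general member of $\Lambda$. That is a fine, more hands-on alternative.

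However, your stated criterion is wrong. ``Both coefficients $\le 0$ and not both zero'' does not give $D_k<D_1$, because the peak hypothesis only guarantees that one of $a,b$ is positive. For example, $D_k-D_1=-a$ with $a=0<b$ gives equality. You need both coefficients strictly negative at every intermediate index $3\le k\le n-1$, which is exactly the paper's exclusion of $\alpha_i=0$ and $\beta_i=0$. The tables do satisfy this stronger condition. The values of $D_k-D_1$ are:
\begin{itemize}
\item $r=2$, $k=3,4,5$: $-a-2b,\ -2a-2b,\ -2a-b$;
\item $r=3$, $k=3,\dots,7$: $-a-2b,\ -3a-3b,\ -4a-3b,\ -4a-2b,\ -3a-b$;
\item $r=4$, $k=3,\dots,11$: $-a-2b,\ -4a-4b,\ -6a-5b,\ -9a-6b,\ -10a-6b,\ -10a-5b,\ -9a-4b,\ -6a-2b,\ -4a-b$.
\end{itemize}
In each case $D_n-D_1$ returns to $-a$. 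As a sanity check, putting $a=-1$ and $b=1-r$ (the system of lines) recovers the sequences of \cref{l:degree_sequence} for $(r,2)$. With the criterion corrected to strict negativity of both coefficients, your argument is complete.
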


\begin{proof}
The base points of $\phi'$ and $\phi$ are two simple sequences of points that we denote respectively by
\[
p\leftarrow p_1 \leftarrow p_2 \quad \text{ and }q\leftarrow q_1 \leftarrow q_{2}\leftarrow \cdots \leftarrow q_{2r-2},
\]
where by assumption $p \neq q$.

Writing $d=\deg(\Lambda)$ and denoting by $m_u$ the multiplicity of $\Lambda$ at a point $u$, the inequalities $\deg(\Lambda) \ge \deg(\phi(\Lambda)),\deg(\Lambda) \ge \deg(\phi'(\Lambda))$ correspond to (\cref{PropDelta}):
\[ m_p+m_{p_1}+m_{p_2}\ge d, \quad (r-1)m_q+\sum_{i=1}^{2r-2} m_{q_i}\ge (r-1)d.\]
We now prove that these two inequalities, and the fact that one is strict, imply that the hypotheses of
 \cref{l:simple_relation} are satisfied. Assume first that a member of the linear system of $\phi$ passes through the three base points of $\phi'$. The intersection with a general member of $\Lambda$ would then be at least $(r-1)m_q+\sum_{i=1}^{2r-2} m_{q_i}+m_p+m_{p_1}+m_{p_2}>rd$, which is impossible by Bézout's theorem.
Similarly, if $r=4$, there is no quintic having multiplicity $3$ at $q$, passing through all base points of $\phi$, and having multiplicity $2$ at the three base points of $\phi'$. Indeed, otherwise the intersection with a general member of $\Lambda$ would be at least $3m_q+\sum_{i=1}^{2r-2} m_{q_i}+2(m_p+m_{p_1}+m_{p_2})>5d$, again contradicting Bézout's theorem.

We define $\phi_1=\phi$, then define $\phi_2$ to be the successor of $\phi_1$ and $\phi'^{-1}$, then define, for each $i\in \{3,\ldots,n-1\}$ the element $\phi_{i}$ to be the successor of $\phi_{i-1}$ and $\phi_{i-2}$. By \cref{l:simple_relation}, we can choose these to satisfy
$\phi_{n-1}\dots \phi_1\phi'^{-1}=\id,$ with a relation that holds in $\lb \Jonq(3L) \mid R \rb$.
Choosing $\phi_n=\phi'^{-1}$ we then obtain
\[
\phi_{n}\dots \phi_1=\id.
\]
It remains to see that
\[
\label{loweri}\tag{$\spadesuit$}
 \deg (\phi_i \dots \phi_1 (\Lambda)) < \deg (\Lambda)
\]
for each $i \in \{ 2, \dots, n-2\}$.

We take a common resolution of $\phi_1,\dots,\phi_n$:
\[
\begin{tikzcd}[row sep = 15]
&&Z\ar[d,"\eta_0" ']\ar[dl,"\eta_{n-1}" ',bend right=10]\ar[dr,"\eta_1" ']\ar[drr,"\eta_2" ',bend left=5]\ar[drrr,"\eta_3",bend left=10]\\
\cdots\ar[r,dashed,"\phi_{n-1}" '] &
\P^2\ar[r,dashed,"\phi_n" '] &
\ar[r,dashed,"\phi=\phi_1" ']\P^2 &
\ar[r,dashed,"\phi_2" '] \P^2 &
\ar[r,dashed,"\phi_3" ']\P^2 &
\ar[r,dashed,"\phi_4" '] \P^2 &
\cdots
\end{tikzcd}
\]
Writing $L$ a line of $\P^2$, we now prove that for each $i\in \{0,1,\ldots, n-1\}$, there exist $\alpha_i,\beta_i\ge 0$ such that
\[
\tag{$\ast$}\label{etaialphaibetai}
\eta_i^*L-\eta_0^*L= \alpha_i (\eta_1^*L-\eta_0^*L)+\beta_i (\eta_{n-1}^*L-\eta_0^*L).
\]
For $i\in \{n-1,0,1\}$, one has $(\alpha_0,\beta_0)=(0,0)$, $(\alpha_1,\beta_1)=(1,0)$ and $(\alpha_{n-1},\beta_{n-1})=(0,1)$.

For $i\in \{2,\ldots,n-2\}$, we apply \cref{TheFormulaOfSuccessor}, that gives
${\eta_i}^* L-{\eta_{i-3}}^* L=\deg(\phi_i)\cdot ({\eta_{i-1}}^* L- {\eta_{i-2}}^* L)$, and thus yields
\[{\eta_i}^* L-\eta_0^*L=\deg(\phi_i)\cdot ({\eta_{i-1}}^* L-\eta_0^*L)-\deg(\phi_i)( {\eta_{i-2}}^* L-\eta_0^*L)+({\eta_{i-3}}^* L-\eta_0^*L).\]
Proceeding by induction, this gives \eqref{etaialphaibetai}, for some integers $\alpha_i,\beta_i$.
We now prove $\alpha_i\ge 0$ and $\beta_i\ge 0$. Let $q$ be a base point of $\varphi$, and let $E_q\in \mathrm{Pic}(Z)$ be the pull-back of its exceptional divisor. The three integer $E_q\cdot \eta_{i}^*L$, $E_q\cdot \eta_1^*L$ and $E_q\cdot\eta_{n-1}^*L$ are the multiplicities of $\varphi_i\dots \varphi_1$, $\phi$ and $\phi'$ at $q$ respectively. These numbers are nonnegative, equal to $0$ for $\phi'$, and $>0$ for $\phi$.
As $E_q\cdot \eta_0^*L=0$, intersecting $E_q$ with both sides of \eqref{etaialphaibetai} proves that $\alpha_i\ge 0$. Doing the same with a base point of $\phi'$ yields $\beta_i\ge 0$.

We now choose $i\in \{2,\ldots,n-2\}$ and use $\deg (\phi_i \dots \phi_1 (\Lambda))=\phi_i \dots \phi_1 (\Lambda)\cdot L={\eta_i}^*(\phi_i \dots \phi_1 (\Lambda))\cdot {\eta_i}^*L={\eta_0}^* \Lambda\cdot {\eta_i}^* L $ to get
\[\deg (\phi_i \dots \phi_1 (\Lambda))-\deg(\Lambda)=\alpha_i
\underset{\le 0}{\underbrace{(\deg(\phi (\Lambda))-\deg(\Lambda))}}+\beta_i
\underset{\le 0}{\underbrace{(\deg(\phi' (\Lambda))-\deg(\Lambda))}}\le 0.\] This proves~\eqref{loweri}, unless we have an equality.
As one of the inequalities $\deg(\phi(\Lambda))\le \deg(\Lambda)$ and $\deg(\phi'(\Lambda))\le \deg(\Lambda)$ is strict, the only possibility to have an equality here is when $\alpha_i=0$ or $\beta_i=0$.
If $\beta_i=0$, we obtain $\eta_i^*L-\eta_0^*L=\alpha_i (\eta_1^*L-\eta_0^*L)$.
The multiplicities of $\phi_i \dots \phi_1$ are thus multiple of the multiplicities of $\phi_1$.
As $\phi_i \dots \phi_1$ is not the identity and does not have the same degree as $\phi_1$ (\cref{l:degree_sequence}), this is impossible, by Noether's formula (\cref{l:noether_rel}). The case $\alpha_i=0$ is symmetrical.
\end{proof}

\begin{lemma}\label{lemme:low-degree-tail-implies-large-m0}
Let $\Lambda$ be a homaloidal linear system of degree $d$.
Let $\phi\in \Jonq(3L)$ with maximal base point $p$ and a tail $t \leftarrow t_1 \leftarrow t_2$, such that $m_t+m_{t_1}+m_{t_1}\le d$, where $m_u=m_u(\Lambda)$ is the multiplicity of $\Lambda$ at a point $u$.
Then, \[\deg (\phi(\Lambda)) \leq d\Rightarrow m_p\ge d/2\quad\text{ and }\quad\deg (\phi(\Lambda)) < d\Rightarrow m_p> d/2.\]
\end{lemma}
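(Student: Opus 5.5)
The plan is to compute $\deg(\phi(\Lambda))-d$ directly with \cref{PropDelta}, grouping the base points of $\phi$ according to the configuration of Case~\ref{BasePtsJonq2} of \cref{Prop:BasePtsJonq}. (We read the hypothesis as $m_t+m_{t_1}+m_{t_2}\le d$.) Write $k=\deg(\phi)$. The base points other than $p$ are the tail $t,t_1,t_2$, the chain $p_{1,1},\dots,p_{1,2d_1-5}$ with $d_1\ge 4$, and the chains $p_{i,1},\dots,p_{i,2d_i-2}$ for $i\ge 2$. Here $\sum_i (d_i-1)=k-1$. \cref{PropDelta} gives
\[
d-\deg(\phi(\Lambda))=\sum_{b} m_b-(d-m_p)(k-1).
\]
We regroup the right-hand side as in the proof of \cref{Cor:SimplificationJonqDec}, using $s(b)=m_b-\frac{d-m_p}{2}$. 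The aim is to show that each group has nonpositive contribution when $m_p<d/2$, so that the sum is $<0$. This gives the contrapositive of the first implication. For the second implication we show the sum is $\le 0$ when $m_p\le d/2$.

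The chains $\mathcal B_i$ for $i\ge 2$ are handled first. Each has $2d_i-2$ points, and the multiplicities along a chain are nonincreasing. Since $p_{i,1},\dots,p_{i,d_i-1}$ are proximate to $p$, the proximity inequality at $p$ gives $m_p\ge \sum_{j} m_{p_{i,j}}$ over the points proximate to $p$. We pair the points of the chain two by two and apply \cref{TwoptsHom} to each pair, or use the line through $p$ and $p_{i,1}$ and Bézout. In either form this gives $m_{p_{i,j}}+m_{p_{i,j'}}\le d-m_p$ when one point is $p$ itself. A cleaner route is a conic argument: a conic through $p$ and the first few points of a chain gives $2d\ge m_p+\dots$. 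I would actually use the bound $m_p+m_{b}+m_{b'}\le d$ for consecutive proximate points, which holds via the line $pp_{i,1}$ since $p_{i,1},\dots,p_{i,d_i-1}$ lie on the strict transform of the exceptional divisor. Combined with monotonicity, this yields $\sum_{b\in\mathcal B_i}s(b)\le (d_i-1)\bigl(2m_p-d\bigr)$ or a comparable bound. The right-hand side is $\le 0$, with strict inequality if $m_p<d/2$.

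For the group $\mathcal B_1=\{t,t_1,t_2,p_{1,1},\dots,p_{1,2d_1-5}\}$, the tail hypothesis gives $s(t)+s(t_1)+s(t_2)\le d-\frac32(d-m_p)=\frac{3m_p-d}{2}$. The remaining $2d_1-5$ chain points are bounded by the same monotonicity and proximity argument as above. The total is then at most a multiple of $(2m_p-d)$ with positive coefficient, up to a residual term. That residual term must be absorbed, and doing so needs $m_{p_{1,j}}\le m_p$ together with a pairing of the odd leftover point with the tail.

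The main obstacle is this bookkeeping in $\mathcal B_1$. It has an odd number of chain points, and the tail contributes $\frac{3m_p-d}{2}$ rather than a multiple of $2m_p-d$. I would try first pairing $p_{1,j}$ with $p_{1,j+1}$ using the line through $p$ and $p_{1,1}$, giving $m_{p_{1,j}}+m_{p_{1,j+1}}\le d-m_p$ for the proximate ones. For the non-proximate tail of the chain I would bound each point individually by $m_{p_{1,d_1-1}}$. Then I would check that the final inequality reduces to $c\,(2m_p-d)\ge d-\deg(\phi(\Lambda))$ with $c>0$. If this is too lossy, I would instead combine with \cref{l:noether_rel}-type bounds, or note $m_p\ge m_{p_{1,1}}\ge\dots$ to get the same conclusion.
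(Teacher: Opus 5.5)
Your proposal has a genuine gap. Making each group $\mathcal B_i$ contribute nonpositively cannot work, because the proximity inequality at $p$ is a single budget shared by \emph{all} chains, and you spend it separately on each chain.

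Concretely, a chain $\mathcal B_i$ ($i\ge 2$) with $d_i=2$ contains only one point proximate to $p$. Its contribution $m_{p_{i,1}}+m_{p_{i,2}}-(d-m_p)$ is not controlled by $m_p<d/2$. Take a homaloidal quintic system with six double points, three of which are $p\leftarrow p_{i,1}\leftarrow p_{i,2}$. Then $m_p=2<5/2$, but the contribution is $2+2-3=1>0$. Your claimed bound $(d_i-1)(2m_p-d)$ already fails for a cubic Jonquières system with double point elsewhere and simple base points $p\leftarrow p_{i,1}\leftarrow p_{i,2}$: it predicts $-1$, while the true value is $0$.

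The inequality $m_p+m_b+m_{b'}\le d$ that you derive from the line through $p$ and $p_{i,1}$ is also unjustified. That line passes through $p_{i,1}$ but through no later point proximate to $p$. Those points lie on the strict transform of the exceptional curve of $p$, which separates from the line once $p_{i,1}$ is blown up. The inequality fails in general: for $\Lambda$ the system of $\phi$ itself, $m_p+m_{p_{1,1}}+m_{p_{1,2}}=k+1>k$. Under your contrapositive hypothesis $m_p\le d/2$ it does hold for two points both proximate to $p$, but only via $m_b+m_{b'}\le m_p\le d-m_p$. It says nothing about the non-proximate points, and your treatment of $\mathcal B_1$ accordingly ends with an unabsorbed residual term.

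The missing idea is to use the proximity inequality once, globally, and to bound the non-proximate points by \cref{TwoptsHom} rather than by monotonicity along a chain:
\begin{itemize}
\item All $\sum_i(d_i-1)=k-1$ points $p_{i,j}$ with $j\le d_i-1$, across all chains, are proximate to $p$, so their multiplicities sum to at most $m_p$.
\item The tail contributes at most $d$ by hypothesis.
\item Each of the remaining $k-4$ base points satisfies $m_b\le d-m_p$.
\end{itemize}
Plugging these into \cref{PropDelta} gives
\[
d-\deg(\phi(\Lambda))\le (k-1)(m_p-d)+m_p+(k-4)(d-m_p)+d=4m_p-2d .
\]
Both implications follow immediately from this, with no contrapositive and no chain-by-chain bookkeeping.
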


\begin{proof}
We denote by $k$ the degree of $\phi$ and use the notation of Case~\ref{BasePtsJonq2} of \cref{Prop:BasePtsJonq} for the base points of $\phi$ and their numbering. 
We then obtain by \cref{{PropDelta}}
 \[
\label{differencetaillarge}
\tag{$\heartsuit$}
 d-\deg (\phi (\Lambda)) =(k-1) (m_p-d) + \sum_{i,j} m_{p_{i,j}} + m_t+m_{t_1}+m_{t_2}.
 \]
 For each $i\in \{1,\ldots,m\}$, the points $p_{i,1}, \dots, p_{i, d_i - 1}$ are proximate to $p$, so we obtain $\sum_{i = 1}^m \sum_{j = 1}^{d_i -1} m_{p_{i,j}}(\Lambda) \leq m_p(\Lambda)$. These are $\sum_{i=1}^{m} (d_i-1)\stackrel{\ref{Prop:BasePtsJonq}}{=}k-1$ points.
 The points $t,t_1,t_2$ satisfy $m_t+m_{t_1}+m_{t_2}\le d$ by hypothesis. The remaining points $k-4$ points have multiplicity at most $d-m_p(\Lambda)$ by \cref{TwoptsHom}. Replacing these in \eqref{differencetaillarge} yields
 \[d-\deg (\phi (\Lambda)) \le (k-1) (m_p-d) + m_p+(k-4)(d-m_p) + d\le 4m_p-2d,\]
 hence the result.
\end{proof}

\begin{lemma}
\label{l:no_common_points}
Let $(\phi,\phi',\Lambda)$ be a peak configuration.
Assume $\phi$ and $\phi'$ have no common base points, and $\deg(\phi) \ge \deg(\phi')$.
Then:
\begin{enumerate}
\item\label{kprime2}
$\deg(\phi')= 2$ and $\deg(\phi) \in \{2,3,4\}$.
\item\label{Notaildisjoint}
The map $\phi$ has no tail. In particular, it corresponds to Case~\ref{BasePtsJonq1} of \cref{Prop:BasePtsJonq}.
\item \label{no2coll}
If $\phi$ has only one base point in the neighbourhood of its maximal base point
\parent{$m=1$ in the notation of \cref{Prop:BasePtsJonq}\ref{BasePtsJonq1}},
then $\phi$ and $\phi'$ are simple.
\end{enumerate}
\end{lemma}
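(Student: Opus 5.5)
The plan is to work only with multiplicity inequalities coming from \cref{PropDelta} and Bézout. Write $d=\deg(\Lambda)$, $k=\deg(\phi)$, $k'=\deg(\phi')$, let $p,p'\in L$ be the maximal base points of $\phi,\phi'$, and write $m_u=m_u(\Lambda)$. The peak condition and \cref{PropDelta} give
\[
\sum_{b\in\mathcal B} m_b \ge (k-1)(d-m_p), \qquad \sum_{b\in\mathcal B'} m_b\ge (k'-1)(d-m_{p'}),
\]
where $\mathcal B$ and $\mathcal B'$ are the non-maximal base points, and at least one inequality is strict. Since $\phi$ and $\phi'$ have no common base points, $p\neq p'$. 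Both points lie on $L$, so Bézout with $L$ gives $m_p+m_{p'}\le d$. The main tool is a curve argument as in \cref{l:simple_peak}: take a member $C$ of the linear system of $\phi$. It has degree $k$, multiplicity $k-1$ at $p$ and passes through $\mathcal B$. Intersecting it with a general member of $\Lambda$ gives
\[
(k-1)m_p+\sum_{\mathcal B}m_b\le kd,\quad\text{hence}\quad (k-1)(d-m_p)\le \sum_{\mathcal B}m_b\le kd-(k-1)m_p .
\]
This is not yet a contradiction. To get one, I will use the extra curve, the line through $p$ and $p'$, which is $L$.

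For \ref{kprime2}, I will bound $\sum_{\mathcal B}m_b$ from above, using that the points of $\mathcal B$ proximate to $p$ have total multiplicity at most $m_p$. By \cref{Prop:BasePtsJonq}, $k-1$ points of $\mathcal B$ are proximate to $p$. The remaining $k-1$ points each have multiplicity at most $d-m_p$ by \cref{TwoptsHom}, and in fact they satisfy chain decreasing conditions. This gives
\[
\sum_{\mathcal B}m_b\le m_p+(k-1)(d-m_p)\quad\text{for } \phi,\qquad\text{similarly}\quad \sum_{\mathcal B'}m_b\le m_{p'}+(k'-1)(d-m_{p'})\quad\text{for } \phi'.
\]
These bounds are too weak on their own. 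The sharper estimate I will try is the following. The $2d_i-2$ points of each chain have multiplicities decreasing along the chain, and the first $d_i-1$ of them have total multiplicity at most $m_p$. So the average multiplicity of the chain points is at most about $m_p/(k-1)$, giving roughly $\sum_{\mathcal B}m_b\le 2m_p$ in the best case. Combining with the peak inequality gives $(k-1)(d-m_p)\le 2m_p$, that is $m_p\ge \frac{k-1}{k+1}d$, and similarly $m_{p'}\ge\frac{k'-1}{k'+1}d$. Adding these and using $m_p+m_{p'}\le d$ gives
\[
\frac{k-1}{k+1}+\frac{k'-1}{k'+1}\le 1,
\]
with strictness handled by the strict peak inequality. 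This condition forces $k'=2$ and $k\le 4$ when $k\ge k'$: for $k'=3$ we get $\tfrac12+\tfrac12=1$, which is excluded by the strict inequality. I expect that proving the bound $\sum_{\mathcal B}m_b\le 2m_p$ rigorously is the main obstacle. The obvious approach fails, since the last points of a chain may be non-proximate to $p$. What I would try first is proximity inequalities along the chain, $m_{p_{i,j}}\ge m_{p_{i,j+1}}$, together with $m_{p_{i,1}}+\dots+m_{p_{i,d_i-1}}\le m_p$ summed over $i$. If this does not give exactly $2m_p$, I will fall back on a Bézout argument with the conic or curve of the linear system of $\phi'$ through points of $\phi$, mimicking the intersection computations in \cref{l:simple_peak}.

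For \ref{Notaildisjoint}, suppose $\phi$ has a tail $t\leftarrow t_1\leftarrow t_2$ with $t\in L$. If $m_t+m_{t_1}+m_{t_2}\le d$, then \cref{lemme:low-degree-tail-implies-large-m0} gives $m_p\ge d/2$. Together with $m_{p'}\ge d/3$ (the case $k'=2$ above) and $m_p+m_{p'}\le d$, a strictness check gives a contradiction. If instead $m_t+m_{t_1}+m_{t_2}>d$, then by Bézout with the lines $L$ through $t,p$ and through $t,p'$ we get $m_t+m_p\le d$ and $m_t+m_{p'}\le d$. Combining these with the multiplicity bounds above should again contradict the inequality for $\phi'$, because $m_{p'}$ must be large while $t,t_1,t_2$ carry more than $d$. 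Case \ref{BasePtsJonq3} is excluded by the same degree argument, since it has $d_1\ge4$ and $d_2\ge2$ and so $k\ge5$.

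For \ref{no2coll}, I will show that the first infinitely near point $p_1$ of $\phi$ is not on $L$, and similarly for $\phi'$. If $p_1\in L$, then $L$ passes through $p,p_1,p'$, so $m_p+m_{p_1}+m_{p'}\le d$. Inserting this into the chain estimates, where $p_1$ has the largest multiplicity, strengthens the bound on $\sum_{\mathcal B}m_b$ enough to contradict the peak inequalities. By \cref{Rem:SimpleBsPts}, $\phi$ is then simple. For $\phi'$ of degree $2$, the same argument applies with the roles exchanged, using $m=1$ automatically.
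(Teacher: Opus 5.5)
\textbf{Part (1) has a gap.} Your argument rests on the bound $\sum_{\mathcal B}m_b\le 2m_p$. In Case~\ref{BasePtsJonq1} of \cref{Prop:BasePtsJonq} this bound does hold, by the argument you were unsure of. In the $i$-th chain, the $d_i-1$ non-proximate points each have multiplicity at most $m_{p_{i,d_i-1}}$. So they contribute no more than the $d_i-1$ proximate points, whose total over all chains is at most $m_p$. In that case your inequality $\frac{2}{k+1}+\frac{2}{k'+1}>1$ does force $k'=2$ and $k\le 4$.

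The estimate does not go through in the other two cases:
\begin{itemize}
\item In Case~\ref{BasePtsJonq2}, the tail contributes $m_t+m_{t_1}+m_{t_2}$, which $m_p$ does not control.
\item In Case~\ref{BasePtsJonq3}, the second chain has $d_2+2$ non-proximate points but only $d_2-1$ proximate ones.
\end{itemize}
You use part~(1) to rule out tails and Case~\ref{BasePtsJonq3}, so the argument is circular as it stands. Your ``fall back'' is not a proof.

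The missing idea is Noether's relation (\cref{l:noether_rel}). Let $B,B'$ be all base points of $\phi,\phi'$, including $p,p'$. Since $B\cap B'=\emptyset$, we get $\sum_{u\in B\cup B'}m_u\le 3d-3$. Add $m_p+m_{p'}$ to the two inequalities from \cref{PropDelta}, one of which is strict. This gives
\[
3d-3>(d-m_p-m_{p'})(k-2)+dk'+(k-k')m_{p'}\ge dk',
\]
so $k'=2$. For $k\ge 3$ this yields $m_p>\frac{(k-3)d+3}{k-2}$. Hence every base point $u$ of $\phi'$ satisfies $m_u\le d-m_p<\frac{d-3}{k-2}$. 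Then $m_{p'}+m_{u_1}+m_{u_2}\ge d$ forces $k\le 4$. This works in all three cases without any proximity estimate.

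\textbf{Part (2) has a step that fails.} You combine $m_p\ge d/2$, $m_{p'}\ge d/3$ and $m_p+m_{p'}\le d$, but these give no contradiction, since $d/2+d/3<d$. The pairwise bounds $m_t+m_p\le d$ and $m_t+m_{p'}\le d$ in your other branch are also too weak.

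What you need is that $p,p',t$ are three distinct proper points on the same line $L$. Here $t\neq p'$ because there are no common base points, so $m_p+m_{p'}+m_t\le d$. Part~(1) gives $k=4$, $m=1$, $d_1=4$. Write $p'\leftarrow p_1'\leftarrow p_2'$ for the base points of $\phi'$. Adding the two inequalities and regrouping gives
\[
(m_p+m_{p'}+m_t)+(m_p+m_{p_1'}+m_{t_1})+(m_p+m_{p_2'}+m_{t_2})+(m_{p_{1,1}}+m_{p_{1,2}}+m_{p_{1,3}})>4d .
\]
Each bracket is at most the first one, using proximity for the last bracket. Hence $m_p+m_{p'}+m_t>d$, which contradicts Bézout. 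No case split on $m_t+m_{t_1}+m_{t_2}$ is needed.

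\textbf{Part (3) is fine.} Your plan is to apply Bézout on $L$ through $p,p_1,p'$, combined with $m_p+m_{p_1}\ge 2d/3$ and $m_{p'}\ge d/3$, one of them strict. This matches the paper's argument.
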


\begin{proof}
We write $d=\deg(\Lambda)$, $k=\deg(\phi)$, $k'=\deg(\phi')$, denote by $B$ (resp.~$B'$) the set of base points of $\phi$ (resp.~$\phi'$), and write $m_u=m_u(\Lambda)$ the multiplicity of a point $u$. We also denote by $p$ and $p'$ the maximal base points of $\phi$ and $\phi'$ respectively.

\begin{enumerate}
\item[\ref{kprime2}]
Since $\deg\phi(\Lambda)\le d$ and $\deg\phi'(\Lambda)\le d$, by \cref{PropDelta} we obtain:
\[
\sum_{u\in B\setminus \{p\}} m_u\ge (d-m_p)(k-1),
\quad
\sum_{u\in B'\setminus\{p'\}} m_u\ge (d-m_{p'})(k'-1),
\]
and one of the two inequalities is strict. By Noether's formula \ref{l:noether_rel} we have $\sum_{u\in B} m_u + \sum_{u\in B'} m_u \le 3d - 3$. Adding $m_p+m_{p'}$ to the two above inequalities yields
\begin{equation}\label{ppprime}
\tag{$\diamondsuit$}
3d-3>(d-m_{p}-m_{p'})(k-2)+dk'+(k-k')m_{p'}.
\end{equation}
As $k\ge k'\ge 2$ and $d\ge m_p+m_p'$ (Lemma~\ref{TwoptsHom}), we obtain $3d-3> dk'$, hence $k'=2$, proving the first part of~\ref{kprime2}.

Replacing $k'=2$ in \eqref{ppprime}, we obtain $3d-3>(d-m_p)(k-2)+2d=k d -(k -2) m_{p}$, so $m_{p} > \frac{(k -3) d + 3}{k -2}.$

Every $u\in B'$ satisfies $m_u+m_p\le d$ (Lemma~\ref{TwoptsHom}), and thus $m_u< d- \frac{(k -3) d + 3}{k -2}= \frac{d-3}{k -2}.$

As $\deg\phi'(\Lambda)\le d$, there exist $u_1,u_2\in \Lambda\setminus \{p'\}$ such that $m_{p'}+m_{u_1}+m_{u_2}\ge d$ (\cref{PropDelta}). This gives $d\le 3\frac{d-3}{k -2}$, which implies that $k\in \{2,3,4\}$ and achieves the proof of \ref{kprime2}.

\item[\ref{Notaildisjoint}]
Assume for contradiction that $\phi$ has a tail, i.e. we are in Case \ref{BasePtsJonq2} of \cref{Prop:BasePtsJonq}, with $d_1\ge 4$, $\sum (d_i-1)=k-1\le 3$, so $m=1$, $d_1=4$ and $k=4$. The base points of $\phi$ are thus
\[ t\leftarrow t_1 \leftarrow t_2, \quad p\leftarrow p_{1,1}\leftarrow p_{1,2}\leftarrow p_{1,3}.\]
Since $p_{1,1},p_{1,2},p_{1,3}$ are proximate to $p$, we have $m_{p_{1,1}} + m_{p_{1,2}} + m_{p_{1,3}} \le m_p$. We write $p'\leftarrow p_1'\leftarrow p_2'$ the base points of $\phi'$, and obtain (again by \cref{PropDelta}):
\begin{align*}
 3m_p + m_{p_{1,1}} + m_{p_{1,2}} + m_{p_{1,3}} + m_{t} + m_{t_1} + m_{t_2} &\ge 3d, \\
 m_{p'} + m_1' + m_2' &\ge d.
\end{align*}
Adding the two, and since one of the inequalities is strict, we get
\[
 (m_p + m_{p'} + m_t) + (m_{p} + m_{p_1 '} + m_{t_1}) + (m_{p} + m_{p_2'} + m_{t_2}) + (m_{p_{1,1}} + m_{p_{1,2}} + m_{p_{1,3}}) > 4d.
\]
Since the first sum is greater or equal to the three other ones, we get $m_p + m_{p'} + m_{t} > d$, which gives a contradiction by Bézout's theorem, as $p,p',t\in L$.

\item[\ref{no2coll}]
By~\ref{kprime2}, we have $\deg(\phi')= 2$ and $\deg(\phi) = k \in \{2,3,4\}$. By~\ref{Notaildisjoint}, $\phi$ is as in Case~\ref{BasePtsJonq1} of \cref{Prop:BasePtsJonq}. We assume that $m=1$ for $\phi$ and prove that $\phi$ and $\phi'$ are simple. The base points of $\phi$ are denoted, as in \cref{Prop:BasePtsJonq}\ref{BasePtsJonq1}, by
\[ p \leftarrow p_{1,1} \leftarrow p_{1,2}\leftarrow \cdots \leftarrow p_{1,2k-2}.\]
Proving that $\phi$ is simple corresponds to say that $L$ does not pass through $p_{1,1}$. We will prove it by showing that $m_p+m_{p'}+m_{p_{1,1}}>d$.

As $\deg(\phi')=2$, it also satisfies Case \ref{BasePtsJonq1} of \cref{Prop:BasePtsJonq} (the other cases are of degree $\ge 4$). Its base points are denoted
\[p'\leftarrow p_{1,1}' \leftarrow p_{1,2}'.\]
Proving that $\phi'$ is simple corresponds to say that $L$ does not pass through $p_{1,1}'$. We will prove it by showing that $m_p+m_{p'}+m_{p_{1,1}}'>d$.

As $\deg(\phi(\Lambda))\le d$ and $\deg(\phi'(\Lambda))\le d'$, by \cref{PropDelta} we have:
\begin{align*}m_{p'}+m_{p_{1,1}'}+m_{p_{1,2}'}-d&\ge 0\\
 \sum\nolimits_{i=1}^{k-1} (m_{p}+m_{p_{1,2i-1}}+m_{p_{1,2i}}-d)&\ge 0.\end{align*}
 with one inequality at least being strict.

 The second inequality yields $m_{p}+m_{p_{1,1}}+m_{p_{1,2}}\ge d$.
 As $m_p\ge m_{p_{1,1}}\ge m_{p_{1,2}}$, we obtain $m_p+m_{p_{1,1}}\ge 2d/3$ and $m_p\ge d/3$, with two strict inequalities if $m_{p}+m_{p_{1,1}}+m_{p_{1,2}}>d$.

 Similarly, $m_{p'}+m_{p_{1,1}'}+m_{p_{1,2}'}\ge d$ yields $m_{p'}+m_{p_{1,1}'}\ge 2d/3$ and $m_{p'}\ge d/3$, with two strict inequalities if $m_{p'}+m_{p_{1,1}'}+m_{p_{1,2}'}>d$.

 Summing up, we obtain
 \[(m_p+m_{p_{1,1}})+m_{p'}>2d/3+d/3=d\text{ and }m_p+(m_{p'}+m_{p_{1,1}}')>d/3+2d/3=d,\] as desired.
\qedhere
\end{enumerate}
\end{proof}

\begin{lemma}\label{lemme:tail-and-no-tail}
Suppose $(\phi, \phi', \Lambda)$ is a peak configuration such that $\phi$ has a tail $t\leftarrow t_1\leftarrow t_2$, where $t$ is the maximal base point of $\phi'$ and $m_t(\Lambda)+m_{t_1}(\Lambda)+m_{t_1}(\Lambda)\le \deg(\Lambda)$. Assume that every base point common to $\phi$ and $\phi'$ is contained in $\{t,t_1,t_2\}$. Then, one of the following holds:
\begin{enumerate}
\item
$\phi'$ is simple of degree $2$;
\item\label{tailnotail3}
$\phi'$ is of degree $3$ with two base points in the first neighbourhood of its maximal base point
\parent{Case~\ref{BasePtsJonq1} of \cref{Prop:BasePtsJonq} with $m=2$}.
\end{enumerate}

\end{lemma}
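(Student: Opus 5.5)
We argue numerically with \cref{PropDelta}, \cref{TwoptsHom} and Bézout's theorem, in the spirit of \cref{l:no_common_points}. Write $d=\deg(\Lambda)$, $m_u=m_u(\Lambda)$, $k=\deg\phi$, $k'=\deg\phi'$, and let $p\in L$ be the maximal base point of $\phi$.

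\emph{Step 1: bound $m_p$.} Since $\phi$ has a tail with $m_t+m_{t_1}+m_{t_2}\le d$ and $\deg\phi(\Lambda)\le d$, \cref{lemme:low-degree-tail-implies-large-m0} gives $m_p\ge d/2$, with strict inequality if $\deg\phi(\Lambda)<d$. The line through $p$ and $t$ is $L$, so Bézout gives $m_p+m_t\le d$ and hence $m_t\le d/2$. More generally, \cref{TwoptsHom} gives $m_u\le d-m_p\le d/2$ for every base point $u\ne p$ of $\phi'$.

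\emph{Step 2: bound $k'$.} The base points of $\phi'$ other than $t$ that lie outside the tail are not base points of $\phi$. By \cref{PropDelta},
\[(k'-1)(d-m_t)\le \sum_{u\in B'\setminus\{t\}} m_u.\]
Split the right-hand side into the contribution of $t_1,t_2$ (if they are base points of $\phi'$) and that of the remaining points. The latter contribute at most $d-m_p\le d/2$ each, and those proximate to $t$ contribute in total at most $m_t$. Now use Noether's relation $\sum m_u\le 3d-3$, summed over the base points of $\phi$ and the new base points of $\phi'$, as in the proof of \cref{l:no_common_points}. The large contribution $(k-1)m_p$ from $p$ together with its proximate points should leave room for only a few extra points. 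I expect this to force $k'\le 3$, and for $k'=3$ to force the configuration of Case~\ref{BasePtsJonq1} with $m=2$.

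\emph{Step 3: exclude the other configurations.} For $k'=2$ we show that $\phi'$ is simple, i.e.\ that its first infinitely near point $t'_1$ does not lie on $L$. If it did, then $p,t,t'_1$ would lie on $L$, and Bézout would give $m_p+m_t+m_{t'_1}\le d$. Combined with $m_t+m_{t'_1}+m_{t'_2}\ge d$, $m_p\ge d/2$ and $m_{t'_2}\le m_{t'_1}$, this yields a contradiction; the strict case comes from the peak hypothesis. For $k'=3$ or $4$ we must rule out the tail cases, Case~\ref{BasePtsJonq2} and Case~\ref{BasePtsJonq3}, as well as Case~\ref{BasePtsJonq1} with $m=1$. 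The tail case gives three new points on $L$, hence new Bézout constraints with $p$. In Case~\ref{BasePtsJonq1} with $m=1$, the chain contains $2k'-2\ge 4$ points, and $d\ge$ (conic or line through $p$ and these points) contradicts $m_p\ge d/2$.

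The main obstacle will be Step 2: controlling the shared points $t_1,t_2$. Whether they are base points of $\phi'$, and whether they are proximate to $t$, changes the count. I would first try the case split on $|\{t_1,t_2\}\cap B'|\in\{0,1,2\}$, and in each case use the inequality $s(b)\ge0$ summed over pairs, as in the proof of \cref{Cor:SimplificationJonqDec}.
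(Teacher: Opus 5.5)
\textbf{The gap: Step 2 is not proved.} This step carries the main content of the lemma, and you never prove $k'\le 3$; you only say you expect it. The ingredients you list there also do not combine as stated.
\begin{itemize}
\item In Noether's relation $p$ contributes only $m_p$. The factor $k-1$ belongs to \cref{PropDelta} for $\phi$, not to Noether.
\item Your two bounds on $B'$ (each point at most $d/2$, the proximate ones summing to at most $m_t$) give, in Case~\ref{BasePtsJonq1}, only $m_t\ge\frac{k'-1}{2k'}d$. That is compatible with $m_t\le d/2$, so no contradiction follows.
\end{itemize}
The missing idea is to add the two inequalities of \cref{PropDelta} for $\phi$ and $\phi'$ (one of them strict). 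Then use the hypothesis $m_t+m_{t_1}+m_{t_2}\le d$ to replace the whole tail contribution on the $\phi$-side by $d$. This is exactly what removes your worry about $t_1,t_2$. Once they are discarded, the points of $B\setminus\{t,t_1,t_2\}$, the points of $B'$, and $p$, $t$ are pairwise distinct; this is where the hypothesis on common base points is used. So \cref{l:noether_rel} bounds their total multiplicity by $3d-3$, which gives $m_p(k-2)+m_t(k'-2)>d(k+k'-6)+3$. Then $m_p+m_t\le d$ yields $d(\min(k,k')-4)+3<0$, hence $k'\le 3$ because $k\ge 4$. No case split on $\{t_1,t_2\}\cap B'$ is needed.

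\textbf{Step 3 also needs repair.} Your argument for $k'=2$ works. Once $k'\le 3$ is known, Cases~\ref{BasePtsJonq2} and~\ref{BasePtsJonq3} of \cref{Prop:BasePtsJonq} are excluded for $\phi'$ by degree alone. So your discussion of $k'=4$ and of tails arises only because Step 2 is unfinished. Besides, a tail contributes one new point on $L$, not three, since its second point is off $L$.

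For $k'=3$ in Case~\ref{BasePtsJonq1} with $m=1$, a conic or line through $p$ is not the right tool; the proximity inequality is. Write the base points of $\phi'$ as $t\leftarrow p'_{1,1}\leftarrow p'_{1,2}\leftarrow p'_{1,3}\leftarrow p'_{1,4}$. Then $m_{p'_{1,1}}+m_{p'_{1,2}}\le m_t$ and $m_{p'_{1,3}}+m_{p'_{1,4}}\le 2m_{p'_{1,2}}\le m_t$. So \cref{PropDelta} gives $m_t\ge d/2$. Together with $m_p\ge d/2$, one of the two being strict by the peak condition, this contradicts Bézout on $L$.

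The same chain estimate gives $\sum_{u\in B'}m_u\le 2m_t$ for every $\phi'$ in Case~\ref{BasePtsJonq1}, hence $m_t\ge\frac{k'-1}{k'+1}d$. This rules out every $k'\ge 3$ in that case, even alternative~\ref{tailnotail3} of the statement. So your route through the inequality for $\phi'$ alone could be completed. However, you would first have to exclude Cases~\ref{BasePtsJonq2} and~\ref{BasePtsJonq3} for $\phi'$ separately, and the Noether argument does that in one stroke.
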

\begin{proof}
We write $d=\deg(\Lambda)$, $k=\deg(\phi)$, $k'=\deg(\phi')$, denote by $m_u=m_u(\Lambda)$ the multiplicity of a point $u$. Let $T=m_{t}+m_{t_1}+m_{t_2}$. Let $p$ and $p'$ be the maximal base points of $\phi$ and $\phi'$ respectively, and let $B$ and $B'$ be the other base points of $\phi$ and $\phi'$. We get
\begin{align*} 0\le d-\deg(\phi(\Lambda))&\stackrel{\ref{PropDelta}}{=}\sum\nolimits_{q\in B} m_q-(d-m_{p})(k-1)\\
0\le d-\deg(\phi'(\Lambda))&\stackrel{\ref{PropDelta}}{=}\sum\nolimits_{q\in B'} m_q-(d-m_{p'})(k'-1).
\end{align*}
Adding these two inequalities, one being strict, and using $m_{t}+m_{t_1}+m_{t_2}\le d$, we obtain
\[ 0<\sum\nolimits_{q\in B\setminus \{t,t_1,t_2\}} m_q+\sum\nolimits_{q\in B'} m_q+m_{p}(k-1)+m_{p'}(k'-1)-d(k+k'-3).\]
By hypothesis, $B\setminus \{t,t_1,t_2\}$ is disjoint from $B'$ and neither $p$ nor $p'$ is contained in these sets, so $\sum\nolimits_{q\in B\setminus \{t,t_1,t_2\}} m_q+\sum\nolimits_{q\in B'} m_q+m_p+m_{p'}\le 3d-3$ (\cref{l:noether_rel}). Hence,
\[m_p(k-2)+m_{p'}(k'-2)>d(k+k'-6)+3.\]
As $m_p+m_{p'}\le d$ (\cref{TwoptsHom}), the left hand side is at most $d(\max(k,k')-2)$, so
\[0>d(k+k'-6)+3-d(\max(k,k')-2)=d(\min(k,k')-4)+3.\]
This gives $\min (k, k') \in\{ 2,3\}$, which implies that $\deg (\phi') \in \{2,3\}$ since $\phi$ has a tail and thus has degree at least $4$ (\cref{Prop:BasePtsJonq}).
We now show the two extra assertions.

Assume first that $\deg (\phi') =2$.
To show that $\phi'$ is simple, we write $t\leftarrow p_{1,1}'\leftarrow p_{1,2}'$ the base points of $\phi'$. 
We need to show that the strict transform of $L$ does not pass through $p_{1,1}'$; this will follow from the inequality \[m_{t} + m_{p_{1,1} '} + m_p > d\] that we prove now.
As $m_t+m_{t_1}+m_{t_2}\le d$, we have $m_p\ge \frac{d}{2}$ (\cref{lemme:low-degree-tail-implies-large-m0}).
The inequality $\deg(\phi'(\Lambda))\le d$ yields $
 m_{t} + m_{p_{1,1}'} + m_{p_{1,2}'} \geq d$ (\cref{PropDelta}). Moreover, one of these two inequalities is strict. Adding them we get
$m_p + m_{t} + m_{p_{1,1}'} + m_{p_{1,2}'} > 3d/2.$
It remains to see that $m_{p_{1,2}'}\le d/2$, which follows from $m_p \geq \frac{d}{2}$ and $m_p + m_{p_{1,2}'} \leq d$ (\cref{TwoptsHom}).

Assume now that $\deg (\phi') =3$.
 Hence, $\phi'$ corresponds to Case~\ref{BasePtsJonq1} of \cref{Prop:BasePtsJonq}. As $\sum_{i=1}^m (d_i-1)=2$, we have either $m=2$ or $m=1$ and $d_1=3$. It remains to show that this latter is impossible. Suppose for contradiction that this holds and write the base points of $\phi'$ as $t \leftarrow p_{1,1} ' \leftarrow p_{1,2}' \leftarrow p_{1,3} ' \leftarrow p_{1,4} '$, where $p_{1,2}'$ proximate to $t$.

As $p_{1,2}'$ proximate to $t$, we have $m_{p_{1,1}'} + m_{p_{1,2}'} \leq m_{t}$. Moreover, $m_{p_{1,2}'} \leq m_{p_{1,1}'}$, so $m_{p_{1,2}'} \le \frac{m_{t}}{2}$.
 Since $m_{p_{1,3}'}, m_{p_{1,4}'} \leq m_{p_{1,2}'}$ we get $m_{p_3'} + m_{p_4'} \leq m_{t}$. The inequality $\deg(\phi'(\Delta))\le d$ yields (\cref{PropDelta}):
\[
\underset{\le m_t}{\underbrace{m_{p_{1,1'}}+m_{p_{1,2}'}}}+\underset{\le m_t}{\underbrace{m_{p_{1,3}'}+m_{p_{1,4}'}}}
\ge 2(d-m_t).\]
This yields $m_t\ge d/2$. We also have $m_p \geq \frac{d}{2}$, and one of these two inequalities is strict, so $m_p + m_{t} > d$ which contradicts \cref{TwoptsHom}.
\end{proof}

\begin{proposition}
\label{prop:induction-peak-configuration}
 Let $(\phi, \phi', \Lambda)$ be a peak configuration.
Assume that $\phi$ and $\phi '$ have different maximal base points $p$ and $q$.

Then, there exist $\psi, \psi'\in \Jonq(3L)$ with maximal base points $p$ and $q$ such that one of the following holds:
\begin{enumerate}[$(\bf A)$]
\item\label{peakind1}
$(\psi, \psi ', \Lambda)$ is a peak configuration satisfying the assumptions of \cref{l:peak_tail} or \cref{l:simple_peak}.
\item\label{peakind2}There exists $\sigma,\sigma' \in \Jonq(3L)$ with the same maximal base point $t\in L\setminus \{p,q\}$ such that $\deg \sigma(\Lambda)<\deg \Lambda$, $\deg \sigma'(\Lambda)<\deg \Lambda$ and such that $(\psi, \sigma, \Lambda)$ and $( \psi ', \sigma',\Lambda)$ are peak configurations satisfying the assumptions of \cref{l:peak_tail}.
\end{enumerate}

Moreover, we may assume that if $\deg \psi(\Lambda)=\deg \Lambda$, then $\deg \phi(\Lambda)=\Lambda$ and $\deg \phi \psi^{-1}$ $<\deg\phi$.
Similarly, if $\deg \psi'(\Lambda)=\deg \Lambda$, then $\deg \phi'(\Lambda)=\Lambda$ and $\deg \phi' \psi'^{-1}< \deg \phi'$.

\end{proposition}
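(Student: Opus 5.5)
First simplify both maps. Apply \cref{Cor:SimplificationJonqDec} to $\phi$ and to $\phi'$: this gives $\psi,\psi'\in\Jonq(3L)\setminus A$ with the same maximal base points $p$ and $q$, base points among those of $\phi$ (resp.\ $\phi'$), and $\deg\psi(\Lambda)\le\deg\Lambda$, $\deg\psi'(\Lambda)\le\deg\Lambda$, with strictness preserved. So $(\psi,\psi',\Lambda)$ is again a peak configuration, and each map is in Case~\ref{BasePtsJonq1} or \ref{BasePtsJonq2} with $m=1$, or Case~\ref{BasePtsJonq3} with $m=2$. If a tail is present, either its three multiplicities sum to less than $d=\deg\Lambda$, or the map has degree $4$.

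For the final assertion: when $\psi\ne\phi$, the base points of $\psi$ form a proper subset of those of $\phi$, so $\phi\psi^{-1}$ has degree $<\deg\phi$. When $\psi=\phi$ the assertion is empty or trivial. If equality $\deg\psi(\Lambda)=\deg\Lambda$ occurs, part~\ref{CorSimp2} of the corollary forces $\deg\phi(\Lambda)=\deg\Lambda$.

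Next split according to common base points of $\psi$ and $\psi'$. Both are infinitely near to $p$ resp.\ $q$ except possibly a tail, and $p\ne q$. So any common base point must lie in a tail $t\leftarrow t_1\leftarrow t_2$ of one map, based at the maximal point of the other.

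\textbf{No common base points.} Apply \cref{l:no_common_points}. The larger-degree map has degree $2,3,4$ and no tail, and the other has degree $2$. If $m=1$ for the larger map, both maps are simple and \cref{l:simple_peak} applies, giving \ref{peakind1}. Otherwise $m=2$ in Case~\ref{BasePtsJonq1} contradicts the normal form of the corollary. In Case~\ref{BasePtsJonq3} the degree is at least $5$, which is impossible here.

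\textbf{Common base points.} Say $\psi$ has a tail based at $q$, and let $T$ denote the sum of the three tail multiplicities. If $T\ge d$, then $\deg\psi=4$. In that case I would try to show that $\psi'$ can be replaced by a degree-$2$ map with base points $q,t_1,t_2$ lowering $\Lambda$ (this uses $T\ge d$ and \cref{PropDelta}), giving the configuration of \cref{l:peak_tail}. If $T<d$, apply \cref{lemme:tail-and-no-tail}: $\psi'$ is simple of degree $2$, or of degree $3$ with $m=2$. The second option contradicts the normal form, unless we pick one branch and pass to a degree-$2$ map. The first yields a degree-$2$ simple map $\psi'$ with a common tail point. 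This case, and the case where both maps carry tails, lead to option~\ref{peakind2}: use the third point $t\in L\setminus\{p,q\}$ and degree-$2$ maps $\sigma,\sigma'$ centred at $t$ sharing the tails. The inequality of \cref{lemme:low-degree-tail-implies-large-m0} ($m_p>d/2$) constrains the multiplicities, so that $\sigma,\sigma'$ strictly lower $\Lambda$.

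The main obstacle is the tail case: producing $\sigma,\sigma'$ with strict degree decrease, and ruling out or reducing the mixed configurations (tail versus degree-$3$ maps, and the $T\ge d$ borderline). I expect careful multiplicity bookkeeping with \cref{PropDelta}, \cref{TwoptsHom} and Bézout on $L$ through $p,q,t$.
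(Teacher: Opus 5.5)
Your opening steps match the paper: the reduction via \cref{Cor:SimplificationJonqDec}, the ``moreover'' clause, and the case without common base points via \cref{l:no_common_points}. The tail case, however, lacks its key idea, and the route you sketch fails in the central subcase. Suppose $\psi$ has a tail $q\leftarrow t_1\leftarrow t_2$ at the maximal point of $\psi'$, with $T=m_q+m_{t_1}+m_{t_2}\le d$, and $\psi'$ is simple quadratic with base points $q\leftarrow q_1\leftarrow q_2$. Then no base point lies on $L\setminus\{p,q\}$, so there is nowhere to centre $\sigma,\sigma'$, and option (B) is unavailable. The paper instead swaps the tail. By the converse part of \cref{Prop:BasePtsJonq} there is $\psi_1$ with the base points of $\psi$, except that the tail is replaced by $q\leftarrow q_1\leftarrow q_2$. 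Since $T\le d\le m_q+m_{q_1}+m_{q_2}$, \cref{PropDelta} gives $\deg\psi_1(\Lambda)\le\deg\psi(\Lambda)$. This is strict if $\deg\psi(\Lambda)=d$, because then $\psi'$ lowers strictly, so $m_q+m_{q_1}+m_{q_2}>d\ge T$. Cutting down to degree $4$ with \cref{Cor:SimplificationJonqDec} gives $\psi_2$ with $\deg\psi_2(\Lambda)<d$. Then $(\psi_2,\psi',\Lambda)$ satisfies \cref{l:peak_tail}, which lands in (A), not (B). Also, before invoking \cref{lemme:tail-and-no-tail} you must exclude that $\psi'$ has a tail at $p$, since that lemma requires all common base points to lie in $\{q,t_1,t_2\}$. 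If $\psi'$ has such a tail with sum $\le d$, then \cref{lemme:low-degree-tail-implies-large-m0} applied to both maps gives $m_p,m_q\ge d/2$ with one inequality strict, contradicting \cref{TwoptsHom}. So that tail has multiplicity sum $>d$, and you exchange the roles of $\psi$ and $\psi'$.

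The same tail replacement is needed when both maps have tails at a common $t\in L\setminus\{p,q\}$, a case your description of common base points initially omits. \cref{lemme:low-degree-tail-implies-large-m0} with \cref{TwoptsHom} only shows that the heavier tail has multiplicity sum $>d$; this yields $\sigma$ and forces that map to have degree $4$. The lighter tail may have sum $\le d$. Then the quadratic map on it does not strictly lower $\Lambda$, and it need not even exist if its three points are collinear. So your claim that both $\sigma$ and $\sigma'$ strictly lower $\Lambda$ is unfounded. The paper replaces $\psi'$ by the map $\psi''$ obtained by substituting the heavier tail, which strictly lowers $\Lambda$ by \cref{PropDelta}, and takes $\sigma'=\sigma$. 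Finally, your threshold $T\ge d$ for passing to a quadratic map fails at the boundary. At $T=d$ the points $q,t_1,t_2$ may be collinear, and even if $\sigma$ exists it has $\deg\sigma(\Lambda)=d$. Then $(\psi,\sigma,\Lambda)$ need not be a peak configuration. The ``moreover'' clause for $\sigma$ against $\phi'$ can also fail, since $t_1,t_2$ need not be base points of $\phi'$. The split has to be at $T>d$, with $T=d$ treated in the branch where $m_p\ge d/2$.
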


\begin{proof}
By \cref{Cor:SimplificationJonqDec}, there exist $\psi,\psi'\in\Jonq(3L)$ with maximal base points $p$ and $q$ respectively, such that $(\psi,\psi',\Lambda)$ is a peak configuration and such that each of $\psi$ and $\psi'$ has exactly one unique base point in the first neighbourhood of $p$ and $q$ respectively $($Cases~\ref{BasePtsJonq1} or \ref{BasePtsJonq2} of \cref{Prop:BasePtsJonq} with $m=1)$ or exactly two, each one with an even number of infinitely near points $($Case~\ref{BasePtsJonq3} of \cref{Prop:BasePtsJonq} with $m=2)$.
This latter case will actually not occur, but we cannot remove it directly.
In the case of a tail, we may moreover assume that the degree of $\psi$ or $\psi'$ is $4$ if the sum of multiplicities of the tail is at least $\deg(\Lambda)$ (see \cref{Cor:SimplificationJonqDec}\ref{CorSimp2}).
As the base points of $\psi$ and $\psi'$ are contained in those of $\phi$ and $\phi'$ (Corollary~\ref{Cor:SimplificationJonqDec}), we have $\deg \phi \psi^{-1} < \deg \phi$ and $\deg \phi' \psi'^{-1}<\deg \phi'$.
Moreover, by Corollary~\ref{Cor:SimplificationJonqDec}, if $\deg \psi(\Lambda)=\deg \Lambda$ then $\deg \phi(\Lambda)=\Lambda$, and if $\deg \psi'(\Lambda)=\deg \Lambda$ then $\deg \phi'(\Lambda)=\Lambda$.

\textbf{Case 1:}
Suppose first that $\psi, \psi'$ have no common base points.
\cref{l:no_common_points} yields $(\deg (\psi), \deg (\psi'))\in \{(2,2),(2,3),(2,4),(3,2),(4,2)\}$ and implies that both $\psi$ and $\psi'$ satisfy Case~\ref{BasePtsJonq1} of \cref{Prop:BasePtsJonq} (note that the other cases are only in degree $\ge 4$, so this follows from \cref{l:no_common_points}\ref{Notaildisjoint}).
As both $\psi$ and $\psi'$ have exactly one unique base point in the first neighbourhood of $p$ and $q$ respectively, we may apply \cref{l:no_common_points}\ref{no2coll}, and obtain that $\psi$ and $\psi'$ are both simple.
Hence, either $(\psi, \psi ', \Lambda)$ or $(\psi', \psi, \Lambda)$ satisfies the assumptions of \cref{l:simple_peak}, which achieves the proof in this case, providing~\ref{peakind1}.

\textbf{Case 2:} We now consider the case where $\psi,\psi'$ have a common base point.
Exchanging $\psi$ and $\psi'$ if necessary, we may assume that there is a common base point $t\in L\setminus \{p\}$. Hence, $\psi$ has tail $t\leftarrow t_1\leftarrow t_2$, and either $t=q$, or $\psi'$ has a tail containing $t$.

The map $\psi$ corresponds to \cref{Prop:BasePtsJonq}\ref{BasePtsJonq2} with $m=1$. Its base points are thus
\[t\leftarrow t_1\leftarrow t_2, \quad p\leftarrow p_{1,1}\leftarrow p_{1,2}\leftarrow \cdots \leftarrow p_{1,2k-5}\]
with $k=\deg(\psi)\ge 4$. We write $d=\deg(\Lambda)$ and denote by $m_u(\Lambda)=m_u$ the multiplicity of $\Lambda$ at a point $u$.

\textbf{Case 2(i):} $t\not=q$. This implies that $\psi'$ has a tail $t\leftarrow t_1'\leftarrow t_2'$. Exchanging $\psi$ and $\psi'$ if needed, we may assume that $m_{t}+m_{t_1}+m_{t_2}\ge m_{t}+m_{t_1'}+m_{t_2'}$.

We now prove $m_{t}+m_{t_1}+m_{t_2}>d$. Indeed, otherwise \cref{lemme:low-degree-tail-implies-large-m0} implies that $m_p\ge d/2$ and $m_q\ge d/2$ with one inequality being strict, so $m_p+m_q>d$, impossible by \cref{TwoptsHom}.
 The three points $t,t_1,t_2$ are thus not collinear, and there exists $\sigma\in \Jonq(3L)$ quadratic with $t,t_1,t_2$ as base points. The inequality $m_{t}+m_{t_1}+m_{t_2}>d$ yields $\deg \sigma(\Lambda)<\deg \Lambda$ (\cref{PropDelta}). It also implies that $\deg(\psi)=4$, as the sum of multiplicities of the tail is at least the degree.

Assume first that $m_{t}+m_{t_1'}+m_{t_2'}>d$.
The three points $t,t_1',t_2'$ are thus not collinear, and there exists $\sigma'\in \Jonq(3L)$ quadratic with $t,t_1',t_2'$ as base points.
As before we obtain $\deg \sigma'(\Lambda)<\deg \Lambda$ and $\deg(\psi')=4$.
Hence, $(\psi,\sigma,\Lambda)$ and $(\psi',\sigma',\Lambda)$ satisfy the assumptions of \cref{l:peak_tail}, and we get~\ref{peakind2}.
If $m_t + m_{t_1'} + m_{t_2'}\leq d$, then let $\psi''$ be the degree 4 map with the same base points as $\psi '$ except that we replace its tail by $t \leftarrow t_1 \leftarrow t_2$ (its existence follows from \cref{Prop:BasePtsJonq}).
As $m_t+ m_{t_1'} + m_{t_2'}\le d<m_t+m_{t_1}+m_{t_2}$, we have $\deg \psi '' (\Lambda) < \deg \psi ' (\Lambda) \leq \deg (\Lambda)$ (\cref{PropDelta}) and $(\psi '', \sigma, \Lambda)$ satisfies the assumptions of \cref{l:peak_tail}.
We get~\ref{peakind2} with $\sigma ' = \sigma$.

\textbf{Case 2(i):} $t=q$.
Suppose that $m_{t}+m_{t_1}+m_{t_2}>d$. We obtain as before $\deg(\psi)=4$, the three points $t,t_1,t_2$ are not collinear, and there exists $\sigma\in \Jonq(3L)$ quadratic with $t,t_1,t_2$ as base points. We replace $\psi$ with $\sigma$ and get Case~\ref{peakind1}, since $(\psi,\sigma,\Lambda)$ satisfies the assumptions of \cref{l:peak_tail} and $\deg \sigma(\Lambda)<\deg(\Lambda)$. We thus assume that $m_{t}+m_{t_1}+m_{t_2}\le d$, which implies that $m_p\ge \frac{d}{2}$ (\cref{lemme:low-degree-tail-implies-large-m0}).

Suppose that $\psi'$ has a tail $t'\leftarrow t_1'\leftarrow t_2'$ with $p=t'$. As $m_p+m_q\le d$ (\cref{TwoptsHom}), \cref{lemme:low-degree-tail-implies-large-m0} implies that $m_{t'}+m_{t_1'}+m_{t_2'}>d$. We then exchange $\psi$ and $\psi'$ and reduce to the previous paragraph.

The remaining case is when $m_{t}+m_{t_1}+m_{t_2}\le d$, and $\psi'$ has either no tail or a tail not containing $p$. \cref{lemme:tail-and-no-tail} proves that $\psi'$ is a simple quadratic map (the second case is impossible here as we already said that $m=1$ if $\psi'$ is in Case~\ref{BasePtsJonq1} of \cref{Prop:BasePtsJonq}). Denote by $q\leftarrow q_1 \leftarrow q_2$ the base points of $\psi'$. \cref{Prop:BasePtsJonq} yields the existence of $\psi_1\in \Jonq(3L)$ having the same degree and base points as $\psi$, except the tail $t\leftarrow t_1\leftarrow t_2$ replaced by $q\leftarrow q_1 \leftarrow q_2$. As $m_{t}+m_{t_1}+m_{t_2}\le d \le m_{q}+m_{q_1}+m_{q_2}$, we obtain $\deg(\psi_1(\Lambda))\le \deg(\psi(\Lambda))$. Then, \cref{Cor:SimplificationJonqDec} yields the existence of $\psi_2\in \Jonq(3L)$ whose base points are contained in those of $\psi_1$, but being of degree $4$, with again $\deg\psi_2(\Lambda)\le d$, and a strict inequality if $\deg\psi(\Lambda)<d $ or $\deg\psi_1(\Lambda)<d$. We now prove that $\deg\psi_2(\Lambda)< d$ always holds. Indeed, if $\deg\psi(\Lambda)=d$, then $\deg\psi'(\Lambda)<d$, so $m_q+m_{q_1}+m_{q_2}>d$ (\cref{PropDelta}), which implies that $m_q+m_{q_1}+m_{q_2}>m_t+m_{t_1}+m_{t_2}$ and thus $\deg\psi_1(\Lambda)<\deg \psi(\Lambda)=\deg(\Lambda)$.
We obtain that $(\psi_2,\psi,\Lambda)$ satisfies the assumptions of \cref{l:peak_tail} and get~\ref{peakind1}.
\end{proof}

\begin{theorem}
\label{t:lowering_rel}
Every peak configuration $(\phi,\phi',\Lambda)$ fits into a degree lowering relation.
\end{theorem}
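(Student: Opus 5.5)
We argue by induction on the pair $(\deg\Lambda, \deg\phi+\deg\phi')$, ordered lexicographically, and split according to whether the maximal base points of $\phi$ and $\phi'$ coincide.

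\emph{Common maximal base point.} Suppose first that $\phi$ and $\phi'$ have the same maximal base point $p\in L$. Then $\phi'\phi^{-1}$ sends the pencil of lines through $q=\phi(p)$ onto the pencil through $q'=\phi'(p)$. Hence $\phi'\phi^{-1}\in\Jonq(3L)$, and the relation $\phi'^{-1}\cdot(\phi'\phi^{-1})\cdot\phi=\id$ has length $3$, so it lies in $R$. This is a degree lowering relation of length $3$: the condition on intermediate degrees is vacuous because $\{2,\dots,n-2\}$ is empty. The same applies when $\phi$ or $\phi'$ is quadratic and the two maps share a proper base point which we may take as common maximal point, since by convention every proper base point of a quadratic map is maximal.

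\emph{Distinct maximal base points.} Now suppose the maximal base points $p\neq q$ are distinct. We apply \cref{prop:induction-peak-configuration} to obtain $\psi,\psi'$.
\begin{itemize}
\item In case~\ref{peakind1}, \cref{l:peak_tail} or \cref{l:simple_peak} shows that $(\psi,\psi',\Lambda)$ fits into a degree lowering relation.
\item In case~\ref{peakind2}, three relations have to be glued. \cref{l:peak_tail} handles $(\psi,\sigma,\Lambda)$ and $(\psi',\sigma',\Lambda)$. The triple $(\sigma,\sigma',\Lambda)$ is a peak configuration with common maximal point $t$, so it is handled by the first step. Concatenating these relations, all intermediate systems have degree $<\deg\Lambda$, because $\deg\sigma(\Lambda),\deg\sigma'(\Lambda)<\deg\Lambda$.
\end{itemize}

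\emph{Passing from $(\psi,\psi')$ back to $(\phi,\phi')$.} It remains to connect $\phi$ with $\psi$, and $\phi'$ with $\psi'$. Both $\phi$ and $\psi$ have maximal base point $p$, so the relation $\phi^{-1}\cdot(\phi\psi^{-1})\cdot\psi=\id$ has length $3$. Consider first the case $\deg\psi(\Lambda)<\deg\Lambda$. Then we can insert this relation at the start of the relation for $(\psi,\psi')$. The new vertex $\psi(\Lambda)$ has degree $<\deg\Lambda$, so the result is still degree lowering.

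\emph{Main obstacle.} The difficult step is the case $\deg\psi(\Lambda)=\deg\Lambda$, where $\psi(\Lambda)$ is not strictly below $\Lambda$ and a naive concatenation fails. Here \cref{prop:induction-peak-configuration} gives $\deg\phi(\Lambda)=\deg\Lambda$ and $\deg(\phi\psi^{-1})<\deg\phi$. Set $\Lambda_1=\psi(\Lambda)$, which has degree equal to $\deg\Lambda$. Since $\phi'$ strictly lowers $\Lambda$, the triple $(\phi\psi^{-1},\,\psi'\psi^{-1}\ldots,\Lambda_1)$ should be reorganised so that it becomes a peak configuration at $\Lambda_1$. Its degree data are smaller in the induction order: $\deg\Lambda_1$ is equal to $\deg\Lambda$, and the total Jonquières degree has dropped. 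Concretely, we would apply the induction hypothesis to $(\phi\psi^{-1},\,\psi^{-1}\text{-inverse},\,\Lambda_1)$, that is, to the two maps $\phi\psi^{-1}$ and $\psi^{-1}$ from $\Lambda_1$. One of them strictly lowers $\Lambda_1$ only if $\deg\Lambda$ is not attained, so we need to check carefully that at least one inequality is strict. I would try to use the case where the strict inequality comes from $\phi'$ and argue symmetrically, exchanging roles via \cref{Rem:ExchangePhiPhiprim}. Making this bookkeeping rigorous, in particular choosing an induction invariant that really decreases when degrees stay equal, is where I expect the main work.
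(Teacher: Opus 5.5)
Your handling of the common-maximal-point case, of Cases (A) and (B) of \cref{prop:induction-peak-configuration}, and of the gluing when $\deg\psi(\Lambda)<\deg\Lambda$ agrees with the paper. The proof is incomplete exactly at the step you flag, and the configuration you propose there cannot work. From $\Lambda_1=\psi(\Lambda)$, the maps $\phi\psi^{-1}$ and $\psi^{-1}$ send $\Lambda_1$ to $\phi(\Lambda)$ and to $\Lambda$, both of degree $\deg\Lambda_1$. So neither inequality is strict, $(\phi\psi^{-1},\psi^{-1},\Lambda_1)$ is not a peak configuration, and a relation through $\psi^{-1}$ would in any case climb back to $\Lambda$. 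Swapping roles via \cref{Rem:ExchangePhiPhiprim} does not help, because the strict inequality coming from $\phi'$ holds at $\Lambda$, not at $\Lambda_1$.

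The missing idea is to pair $\theta^{-1}=\phi\psi^{-1}$ with the first map $\chi_1$ of the degree lowering relation $\psi'\psi^{-1}=\chi_n\cdots\chi_1$ you already have for $(\psi,\psi',\Lambda)$. In Case (B), use instead the first map of the relation for $(\psi,\sigma,\Lambda)$. By definition of a degree lowering relation, $\deg\chi_1(\Lambda_1)<\deg\Lambda=\deg\Lambda_1$, while $\deg\theta^{-1}(\Lambda_1)=\deg\phi(\Lambda)=\deg\Lambda_1$. Hence $(\theta^{-1},\chi_1,\Lambda_1)$ is a peak configuration at a system of the same degree, whose non-lowering map has degree $\deg\theta<\deg\phi$.

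This is why the paper inducts on $(\deg\Lambda,u)$, with $u$ the degree of the unique map of the configuration that does not lower the degree ($u=0$ if there is none). Here $u$ drops from $\deg\phi$ to $\deg\theta$ regardless of $\deg\chi_1$. The induction hypothesis then rewrites $\chi_1\theta$ as a product whose intermediate systems have degree $<\deg\Lambda$, which removes the bad vertex $\psi(\Lambda)$. If $\theta\in A$, you instead merge $\chi_1\theta$ into a single element of $\Jonq(3L)$.

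You should also note that $\deg\psi(\Lambda)$ and $\deg\psi'(\Lambda)$ cannot both equal $\deg\Lambda$. This follows from the last part of \cref{prop:induction-peak-configuration} together with the strictness of one inequality for $(\phi,\phi',\Lambda)$, so only one end ever needs repairing. Finally, your invariant $(\deg\Lambda,\deg\phi+\deg\phi')$ would only be shown to decrease after an extra check that $\deg\chi_1\le\deg\phi'$, using the explicit shape of the relations in \cref{l:peak_tail} and \cref{l:simple_peak}. The paper's $u$ avoids this check.
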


\begin{proof}
We prove the result by induction on $(\deg \Lambda, u)$, where $u$ is the degree of the unique $\alpha\in \{\phi,\phi'\}$ satisfying $\deg \alpha(\Lambda)=\deg(\Lambda)$.
If no such $\alpha$ exists, by convention we set $u=0$.

Assume that the result holds for any pair smaller than $(\deg \Lambda, u)$, with respect to the lexicographic order.
 If $\phi, \phi'$ have the same maximal point, then $\phi''=\phi' \phi^{-1} \in \Jonq(3L)$ and this is a relation of length $3$ in $R$. We get a degree lowering relation of length $3$ and are done.
 Otherwise, we apply \cref{prop:induction-peak-configuration}.

In Case~\ref{peakind1}, we obtain the following commutative diagram:
\[
\begin{tikzcd}[dashed]
&& \Lambda \ar[dll," \phi",swap,bend right=7pt]\ar[dl," \psi"]\ar[dr," \psi'",swap] \ar[drr,"\phi'",bend left=7pt]\\
\phi(\Lambda) \ar[r," \theta",swap]&\psi(\Lambda)\ar[rr," \psi'\psi^{-1}",swap]&&\psi'(\Lambda)\ar[r," \theta'^{-1}",swap]& \phi'(\Lambda)
\end{tikzcd}
\]
where $\theta=\psi\phi^{-1},\theta'=\psi'\phi'^{-1}\in \Jonq(3L)$.
Since $(\psi, \psi ', \Lambda)$ is a peak configuration satisfying the assumptions of \cref{l:peak_tail} or \cref{l:simple_peak}, we have $\psi'\psi^{-1}=\chi_n\dots \chi_1$ in $\lb \Jonq(3L) \mid R \rb$, where $\chi_1,\dots,\chi_n\in \Jonq(3L)$ and such that $\deg \chi_i\dots \chi_1\psi(\Lambda)<\deg \Lambda$ for each $i\in \{1,\ldots,n-1\}$.
If $ \psi(\Lambda)$ and $\psi'(\Lambda)$ have both degree strictly smaller than $\Lambda$, this directly gives a degree lowering relation $\phi'\phi^{-1}=\theta'^{-1}\chi_n\dots \chi_1\theta$ for $(\phi,\phi',\Lambda)$, where we replace $\chi_1$ and $\theta$ with their product if $\theta\in \Aut(\P^2)$ is of degree $1$, and similarly with $\theta'^{-1}$ and $\chi_n$ if $\theta'\in \Aut(\P^2)$. The only problematic cases are when $\deg\psi(\Lambda)=\deg(\Lambda)$ or $\deg\psi'(\Lambda)=\deg(\Lambda)$. By symmetry (which exchanges $\phi$ and $\phi'$), we may assume that $\deg\psi(\Lambda)=\deg(\Lambda)$.
Then, \cref{prop:induction-peak-configuration} implies that $\deg \phi(\Lambda)=\deg \Lambda$, $\deg \theta<\deg\phi$ and $\deg \psi'(\Lambda)<\deg \Lambda$.
By induction, the peak configuration $(\theta^{-1},\chi_1,\psi(\Lambda))$ admits a degree lowering relation. Replacing $\chi_1\theta$ with the corresponding product yields a degree lowering relation for $(\phi,\phi',\Lambda)$.

In Case~\ref{peakind2}, we obtain the following commutative diagram:
\[
\begin{tikzcd}[dashed]
&&& \Lambda \ar[dlll," \phi",swap,bend right=13pt]\ar[dll," \psi",swap,bend right=5pt]\ar[dl," \sigma"]\ar[dr," \sigma'",swap] \ar[drr,"\psi'",bend left=5pt]\ar[dr," \sigma'",swap] \ar[drrr,"\phi'",bend left=13pt]\\
\phi(\Lambda)\ar[r," \theta",swap]&\psi(\Lambda) \ar[r," \sigma\psi^{-1}",swap]&\sigma(\Lambda)\ar[rr," \sigma'\sigma^{-1}",swap]&&\sigma'(\Lambda)\ar[r,"\psi'\sigma'^{-1}",swap]& \psi'(\Lambda) \ar[r,"\theta'^{-1}",swap]&\phi'(\Lambda)
\end{tikzcd}
\]
where $\theta=\psi\phi^{-1},\theta'=\psi'\phi'^{-1}\in \Jonq(3L)$.
Since both $(\psi, \sigma, \Lambda)$ and $( \psi ', \sigma',\Lambda)$ are peak configurations satisfying the assumptions of \cref{l:peak_tail}, we may as above write $\sigma\psi^{-1}=\chi_n\dots \chi_1$ in $\lb \Jonq(3L) \mid R \rb$ where $\deg \chi_i\dots \chi_1\phi(\Lambda)<\deg \Lambda$ for each $i$, and similarly for $\psi'\sigma'^{-1}=\chi_m'\dots \chi_1'$.
As $\sigma'\sigma^{-1}\in \Jonq(3L)$ and $ \sigma(\Lambda)$, $\sigma'(\Lambda)$ both have degree smaller than $\deg \Lambda$, we obtain a degree lowering relation if $\deg \psi(\Lambda)<\deg \Lambda$ and $\deg \psi'(\Lambda)<\deg \Lambda$. As before, it suffices then to study the case where $\deg \psi(\Lambda)=\deg \Lambda$, $\deg \phi(\Lambda)=\deg \Lambda$, $\deg \theta<\deg \phi$ and $\deg \psi'(\Lambda)<\deg \Lambda$.
Again we conclude by applying the induction hypothesis to $(\theta^{-1},\chi_1,\psi(\Lambda))$.
\end{proof}

\subsection{Relations in \texorpdfstring{$\Bir(\P^2, 3L)$}{Bir(P2,3L)}}
\begin{definition}\label{dfn:}
Let $r\ge 1$ and let $\phi_1, \dots, \phi_r \in \Jonq(3L)$. We say that the product
 \[
 \phi_r \dots \phi_1
 \]
 is in \emph{normal form} if, writing $d_i = \deg (\phi_i \dots \phi_1)$, the sequence
 \[
 (d_1,\ldots,d_r)
 \]
 is strictly increasing.
\end{definition}

\begin{theorem}
\label{t:normal_form}
Let $\phi_1, \dots, \phi_r \in \Jonq(3L)$.
Then there exists a word $\psi_s \dots \psi_1$ in normal form such that $\phi_r \dots \phi_1 = \psi_s \dots \psi_1$ in the group $\langle \Jonq(3L) \mid R \rangle$. 
\end{theorem}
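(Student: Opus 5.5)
We argue by induction on the pair $(D, n)$ in lexicographic order, where $D=\max_i d_i$ with $d_i=\deg(\phi_i\dots\phi_1)$ and $d_0=1$, and $n$ is the number of indices $i$ with $d_i=D$. The goal is to replace the word $\phi_r\dots\phi_1$, modulo relations in $\lb \Jonq(3L)\mid R\rb$, by another word representing the same element, and either to reach normal form or to strictly decrease $(D,n)$. Before doing anything else, we use relations of length $\le 3$ (which lie in $R$) to absorb every factor $\phi_i\in A$ into a neighbouring factor, since $A\cdot\Jonq(3L)\cdot A\subset\Jonq(3L)$. We also remove any factor $\phi_i$ whose neighbour products have equal degree $d_{i-1}=d_i$ and which is affine. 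If $d_{i}=d_{i-1}$ occurs with $\phi_i\notin A$, it is simply treated as part of a non-increasing step below.

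Suppose the word is not in normal form and $D>1$. Take the \emph{last} index $i$ with $d_i=D$. If $i=r$ and the sequence is strictly increasing up to $r$, we are done. Otherwise there is an index $i$ with $d_i=D$ and $i<r$, so that $d_{i+1}\le D$. If the word were increasing up to $i$ we would also need $d_{i-1}<D$; in general, let $i$ be an index with $d_i=D$ that is a local peak, meaning $d_{i-1}\le D$ and $d_{i+1}\le D$, and choose it so that at least one of these inequalities is strict. Such an index exists, unless the degrees are constant equal to $D$ on a block reaching the end. In that case we instead take the first index of the final maximal block.

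Set $\Lambda$ to be the homaloidal system of $(\phi_i\dots\phi_1)^{-1}$, with $\deg\Lambda=D$, and put $\phi=\phi_i^{-1}$, $\phi'=\phi_{i+1}$. Then $\deg\phi(\Lambda)=d_{i-1}$ and $\deg\phi'(\Lambda)=d_{i+1}$, so $(\phi,\phi',\Lambda)$ is a peak configuration. By \cref{t:lowering_rel} it fits into a degree lowering relation $\chi_n\dots\chi_1=\id$ with $\chi_1=\phi_i^{-1}$ and $\chi_n^{-1}=\phi_{i+1}$. This gives
\[
\phi_{i+1}\phi_i=\chi_{n-1}\dots\chi_2
\]
in $\lb \Jonq(3L)\mid R\rb$, and all intermediate degrees in the new word are $<D$. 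The substitution removes the occurrence $d_i=D$ without creating new ones, so $(D,n)$ strictly decreases.

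The delicate step is the boundary situation: the last factors keep degree $D$, i.e.\ $d_{j}=D$ for all $j$ from some index to $r$. Since the final value $d_r=\deg(\phi_r\dots\phi_1)$ is forced, we cannot lower it. Here we would take the first index $i$ of this final block, where $d_{i-1}<D$, and merge $\phi_{i+1}\phi_i$. For this, consider $\Lambda$ as above with $\phi=\phi_i^{-1}$ and $\phi'=\phi_{i+1}$; this is again a peak configuration, because $d_{i-1}<D$ makes the first inequality strict. By \cref{t:lowering_rel}, $\phi_{i+1}\phi_i$ equals a word whose intermediate degrees are $<D$, and whose last partial product has degree $d_{i+1}=D$. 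This shortens the final block by one, so $n$ decreases.

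The main point to check carefully is that these substitutions never create new indices of degree $D$, and that the prefix and suffix degrees are unchanged. This follows because the products $\phi_j\dots\phi_1$ for $j\le i-1$ and $j\ge i+1$ are unchanged as elements of $\Bir(\P^2)$. Termination holds since $(D,n)$ decreases lexicographically.
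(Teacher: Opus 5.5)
\textbf{There is a genuine gap: the induction measure fails.} You take $D=\max_i d_i$ over all partial products, including $d_r=\deg(\phi_r\cdots\phi_1)$. But $d_r$ is the same for every word representing the element. So once the maximum is attained only at the last index, $(D,n)=(d_r,1)$ can never decrease, yet the word need not be in normal form.

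For instance, take a degree sequence $(d_0,\dots,d_3)=(1,5,3,7)$. It has a peak at $i=1$, but $D=7$ is attained only at $i=r$. Your sentence ``Otherwise there is an index $i$ with $d_i=D$ and $i<r$'' is false here. Your boundary step also fails: when the final block of degree $D$ is the single index $r$, there is no $\phi_{r+1}$ to merge with, so the block cannot be shortened. This situation arises naturally from your own steps. Lowering the peak of $(1,7,3,5)$ can produce $(1,4,2,3,5)$, where your measure is stuck at $(5,1)$ while the peak at $4$ remains. Taking the maximum over $\{1,\dots,r-1\}$ instead does not help either: in $(1,3,2,5,6,7)$, lowering the peak at index $1$ leaves that maximum unchanged.

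The missing idea is to measure only peaks, as the paper does. Let $I$ be the set of $i\in\{1,\dots,r-1\}$ with $d_{i-1}\le d_i\ge d_{i+1}$ and one inequality strict. Induct lexicographically on
\[
\Bigl(\max_{i\in I}d_i,\ \#\{i : d_i=\max_{I} d\},\ r\Bigr),
\]
where the length $r$ handles the absorption of affine factors. If no factor is affine and the word is not in normal form, the first $i$ with $d_{i+1}\le d_i$ satisfies $i\ge 1$ (since $d_1>d_0=1$) and $d_{i-1}<d_i$, so $I\neq\emptyset$. Now apply the degree lowering relation at a peak $i_0$ of maximal value. This removes $d_{i_0}$ and inserts only degrees $<d_{i_0}$. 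The only old indices whose neighbours change are $i_0\pm 1$, and their neighbour values decrease, so any newly created peak has value $d_{i_0\pm1}\le d_{i_0}$. Hence the measure strictly drops.

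With this corrected measure, the rest of your argument goes through: the peak configuration $(\phi_i^{-1},\phi_{i+1},\Lambda)$, the use of the degree lowering relation, and the invariance of prefix and suffix degrees.
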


\begin{proof}
Let $W = \phi_r \dots \phi_1$ be a word, with $\phi_1,\ldots,\phi_r\in \Jonq(3L)$. 
We write $d_i = \deg (\phi_i \dots \phi_1)$ for $i \in \{0 ,\dots, r\}$, where the case $i = 0$ corresponds to the identity map, so $d_0=1$.
Let $I$ be the set of indices $i$ in $\left\{ 1, \dots,r-1 \right\}$ such that $d_{i-1} \leq d_i$ and $d_{i+1} \leq d_i$ with one of the inequalities being strict. We then define
\[
D(W) = \left( d_{\max} (W), t_{\max}(W), r \right),
\]
where $d_{\max}(W) = \max_{i \in I} d_i$, 
$t_{\max} (W) = \# \left\{ i \mid 1 \le i \le r, d_i = d_{\max} (W) \right\}$.

The element $D(W)$ belongs to the set $(\Z_{\geq 0})^3$, which is well ordered with the lexicographic order.
We now prove the result by induction on $D(W)$.
We thus assume that $W$ is not in normal form, and that the result holds for all words smaller than $W$.

If $\phi_i\in A$ for some $i\in \{1,\ldots,r\}$, then  $r\ge 2$ (if $r=1$, then $\phi_1$ is in normal form), we replace $\phi_i\phi_{i-1}$ or $\phi_{i+1}\phi_i$ with its product. This relation holds in $\langle \Jonq(3L) \mid R \rangle$ and this gives a new word $W'$ with smaller length and $(d_{\max}(W'),t_{\max}(W'))\le (d_{\max}(W),t_{\max}(W))$. The result holds by induction.

Otherwise, let $i_0 \in I$ be an index such that $d_{i_0} = d_{\max} (W)$.
Writing $\Lambda_0$ the linear system of $(\phi_{i_0} \dots \phi_{1})^{-1}$, the triple $(\Lambda_0, \phi_{i_0 + 1}, \phi_{i_0}^{-1})$ is a peak configuration.
By \cref{t:lowering_rel}, there exist $\psi_1, \dots, \psi_s \in \Jonq(3L)$ such that
\[
 \phi_{i_0 + 1}\phi_{i_0} = \psi_s \dots \psi_1
\]
in $\langle \Jonq (3L) \mid R \rangle$ and such that, for each $t \in \{1,\dots, s-1\}$, we have 
\[
 \deg (\psi_t \dots \psi_1 \phi_{i_0 -1} \dots \phi_1) < d_{\max} (W).
\]
Note that $\deg (\psi_s \dots \psi_1 \phi_{i_0 -1} \dots \phi_1) = \deg(\phi_{i_0 +1}\phi_{i_0} \dots \phi_1 ) \leq d_{\max} (W).$ 
Let $W'$ be the new word
\[
 W' = \phi_r \dots \phi_{i_0 + 2} \psi_s \dots \psi_1 \dots \phi_{i_0 -1} \dots \phi_1.
\]
We either get $d_{\max}(W') < d_{\max} (W)$, or we have equality but then $t_{\max}(W') < t_{\max} (W)$.
In any case, $D(W') < D(W)$. Hence, the result holds by induction.
\end{proof}

We can now finish give the proof of \cref{t:main_bis}:

\begin{proof}[Proof of Theorem~\ref{t:main_bis}]
By \cref{t:generators}, $\Jonq(3L)$ is a generating set of $\Bir(\P^2, 3L)$.
So we have a surjective morphism $\langle \Jonq (3L) \mid R \rangle \twoheadrightarrow \Bir(\P^2, 3L)$.
To show that this is an isomorphism, we take $\phi_1, \dots, \phi_r \in \Jonq( 3L)$ such that $\phi_r \dots \phi_1 = \id$ in $\Bir(\P^2, 3L)$, and prove that $\phi_r \dots \phi_1 = \id$ in $\langle \Jonq(3L) \mid R \rangle$. \cref{t:normal_form} yields the existence of $s\ge 1$ and $\psi_1,\ldots,\psi_s\in \Jonq(3L)$ such that
 \[
 \id = \phi_r \dots \phi_1 = \psi_s \dots \psi_1
 \]
 in $\langle \Jonq(3L) \mid R \rangle$, where $\psi_s \dots \psi_1$ is in normal form.
As $\deg(\psi_s \dots \psi_1)=1$, this implies that $s=1$ and $\psi_1=\mathrm{id}$.
\end{proof}

\section{Alternative formulation of the main theorem}
\label{sec:reformulation}
\subsection{Relations of length 3 in \texorpdfstring{$\Jonq(3L)$}{Jonq(3L)}}

\begin{lemma}\label{l:jonq_relation}Let $\phi_1, \phi_2, \phi_3 \in \Jonq(3L)$ such that $\phi_3 \phi_2 \phi_1 = 1$ in $\Bir(\P^2,3L)$.
Then there are some points $p_1, p_2,p_3, p_4 = p_1 \in \P^2$ such that $\phi_i$ sends the pencil of lines through $p_{i}$ to the pencil of lines through $p_{i+1}$ for each $i\in \{1,2,3\}$.

In particular, if $a_1, a_2, a_3 \in A$ are such that $a_i(\pcoor{1:0:0}) = p_i$, we get
\[
\id =
(\underbrace{a_1^{-1} \phi_3 a_3}_{b_3\in B})
(\underbrace{a_3^{-1} \phi_2 a_2}_{b_2\in B})
(\underbrace{a_2^{-1} \phi_1 a_1}_{b_1\in B}).
\]
\end{lemma}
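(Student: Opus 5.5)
The plan is to find a pencil of lines that the three maps pass along in a cycle. Each $\phi_i\in\Jonq(3L)$ of degree $\ge 2$ sends the pencil of lines through a maximal base point to a pencil of lines through a maximal base point of $\phi_i^{-1}$. All base points lie on $L$ by \cref{p:base_points_on_poles}, and in fact the proper maximal base point lies on $L$ by \cref{Prop:BasePtsJonq}. Affine maps send every pencil of lines to a pencil of lines. So the task is to choose $p_1$ so that its image pencil under $\phi_1$ is a maximal pencil for $\phi_2$, and so on around the cycle.

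First treat the case where some $\phi_i$ is affine, say after cyclic relabelling $\phi_3\in A$. Then $\phi_2=\phi_3^{-1}\phi_1^{-1}$. Take $p_1$ to be a maximal base point of $\phi_1$ (any point of $L$ if $\phi_1\in A$), and let $p_2$ be the point whose pencil is the image under $\phi_1$. Since $\phi_2=\phi_3^{-1}\phi_1^{-1}$, the map $\phi_2$ sends the pencil through $p_2$ back to the pencil through $p_1$ and then by $\phi_3^{-1}$ to the pencil through $p_3:=\phi_3^{-1}(p_1)$. Finally $\phi_3$ sends the pencil through $p_3$ to the pencil through $p_1$, which closes the cycle.

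Now suppose no $\phi_i$ is affine. Set $d_i=\deg\phi_i$ and assume $d_1\ge d_2,d_3$. The relation gives $\phi_1^{-1}=\phi_3\phi_2$. If $\phi_2$ and $\phi_3^{-1}$ have no common base point, then $\deg(\phi_3\phi_2)=d_2d_3>\max(d_2,d_3)$, which contradicts $d_1\ge d_2,d_3$. More precisely, the plan is to compare homaloidal types. Write $\Lambda$ for the linear system of $\phi_1$, which equals the system of $(\phi_3\phi_2)^{-1}=\phi_2^{-1}\phi_3^{-1}$. Apply \cref{PropDelta} to $\phi_3^{-1}$ acting on $\Lambda_{\phi_2^{-1}}$, or argue directly as follows. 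The composition $\phi_2^{-1}\phi_3^{-1}$ has degree $d_1\le d_2d_3$, with equality exactly when there is no cancellation. A degree drop forces a common base point, and I would show that the maximal base point $q$ of $\phi_3^{-1}$ coincides with the maximal base point of $\phi_2$. The main obstacle is excluding the situation where the common base points are only simple ones, for example a tail, while the maximal points differ. For this I would use \cref{PropDelta}: if the maximal points differ, then $m_{q}$ of the system of $\phi_2$ is at most $1$. The degree formula $\deg=d_2d_3-\sum m\mu$ and the bound $\sum(\text{simple multiplicities})\le 2d_3-2$ then give $d_1\ge d_2d_3-(d_3-1)-\dots$, which contradicts $d_1\ge d_2,d_3$ once this is combined with Noether's relations (\cref{l:noether_rel}). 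Degree $2$ maps, which have three maximal points, are handled by the freedom of choosing among them.

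Once $\phi_3^{-1}$ and $\phi_2$ share a maximal point $p_3$, the composite $\phi_3\phi_2=\phi_1^{-1}$ sends the pencil through $p_2$ to the pencil through $p_1:=\phi_3(p_3)$, where $p_2$ is the preimage pencil point. Hence $\phi_1$ maps the pencil through $p_1$ to the pencil through $p_2$, and the cycle closes. For the second assertion, each $a_{i+1}^{-1}\phi_ia_i$ preserves the pencil through $\pcoor{1:0:0}$ and is volume preserving. By definition of $B$, or by \cref{l:jonq_preserving}, it therefore lies in $B$, and the telescoping product is the identity.
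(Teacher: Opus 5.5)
There is a genuine gap: your affine case and the final statement about $B$ are fine, but the non-affine case, which is the whole content of the lemma, is not established.

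\textbf{Wrong pair of maps.} Write $p_i$ and $q_i$ for the maximal points of $\phi_i$ and $\phi_i^{-1}$. Then $\deg(\phi_3\phi_2)=d_2d_3-\sum_x m_x(\phi_2^{-1})m_x(\phi_3)$, so the relevant coincidences are between base points of $\phi_2^{-1}$ and $\phi_3$ (for maximal points, $q_2=p_3$). They are not between $\phi_2$ and $\phi_3^{-1}$. Likewise, matching the maximal point of $\phi_3^{-1}$ with that of $\phi_2$ chains $\phi_3$ into $\phi_2$, i.e.\ it concerns $\phi_2\phi_3$, which is not a consecutive pair of the cycle. So your last paragraph does not follow from the condition you set out to prove.

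\textbf{No contradiction from the degree count.} Because you chose $d_1$ maximal, the equality $d_1=\deg(\phi_3\phi_2)=d_2d_3$ (no common base points) is perfectly compatible with $d_1\ge d_2,d_3$. The same defect affects your sketched bound $d_1\ge d_2d_3-(d_3-1)-\cdots$: a lower bound on the largest degree can never contradict its maximality. You should instead make the maximal degree a factor. For example, take $d_2$ maximal and write $\phi_3^{-1}=\phi_2\phi_1$; then absence of common base points of $\phi_1^{-1}$ and $\phi_2$ gives $d_3=d_1d_2>d_2$.

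\textbf{Degree $2$.} Elements of $\Jonq(3L)$ of degree $2$ have a single proper base point by \cref{Prop:BasePtsJonq}. So there is no freedom of choosing among three maximal points.

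\textbf{The missing step.} Even after these corrections, the real difficulty is left as an unspecified computation. This is the case where common base points exist but no consecutive maximal points agree, e.g.\ the maximal point of one map is the simple tail point of the next. The paper closes it with a different idea:
\begin{enumerate}
\item Assume $q_1\ne p_2$, $q_2\ne p_3$, $q_3\ne p_1$ and $d_2$ maximal.
\item The product argument above gives $d_2\ge 4$: for $d_2\le 3$ the maps $\phi_1^{-1}$ and $\phi_2$ have the unique proper base points $q_1\ne p_2$, hence no common base points.
\item Consider $\Omega=\phi_2(\Lambda_{q_1})=\phi_3^{-1}(\Lambda_{p_1})$, where $\Lambda_x$ is the pencil of lines through $x$. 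It has degree $r\ge d_2-1\ge 3$.
\item The pencils through $p_2,q_1$ (resp.\ through $q_3,p_1$) define birational maps to $\P^1\times\P^1$. Hence so do $(\Lambda_{q_2},\Omega)$ and $(\Lambda_{p_3},\Omega)$.
\item This forces $\Omega$ to have multiplicity $r-1$ at the two distinct proper points $q_2$ and $p_3$, which Bézout forbids since $2(r-1)>r$.
\end{enumerate}
Some argument of this kind, or a complete case analysis of the possible shared base points via \cref{Prop:BasePtsJonq}, is what your proposal is missing.
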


\begin{proof}
We may replace $(\phi_1,\phi_2,\phi_3)$ with $(\phi_2,\phi_3,\phi_1)$ or $(\phi_3,\phi_1,\phi_2)$ if needed.
We need to find a pencil $\Lambda$ of lines of $\P^2$ such that $\varphi_1(\Lambda)$ and $\varphi_2(\varphi_1(\Lambda))$ are pencils of lines.
If some $\phi_i\in A$ is linear, we may assume that $\varphi_2\in A$, and choose $\Lambda$ to be a pencil of lines such that $\varphi_1(\Lambda)$ is a pencil of lines.
We may thus assume that none of $\varphi_1,\varphi_2,\varphi_3$ belongs to~$A$.
For each $i\in \{1,2,3\}$, we write $d_i=\deg(\varphi_i)\ge 2$, denote by $p_i$ the maximal base point of $\varphi_i$ and by $q_i$ the maximal base point of $(\varphi_i)^{-1}$, so that $\varphi_i$ sends the pencil $\Lambda_{p_i}$ of lines through $p_i$ onto the the pencil $\Lambda_{q_i}$ of lines through $q_i$.

If $q_1=p_2$, we can choose $\Lambda=\Lambda_{p_1}$ and obtain $\varphi_1(\Lambda_{p_1})=\Lambda_{q_1}$ and $\varphi_2(\varphi_1(\Lambda_{p_1}))=\Lambda_{q_2}$, which achieves the proof. The same holds if $q_2=p_3$ or $q_3=p_1$.

We may thus assume that $q_1\not=p_2$, $q_2\not=p_3$ and $q_3\not=p_1$ and derive a contradiction.

We may also assume that $d_2=\max(d_1,d_2,d_3)$.
We now prove that $d_2\ge 4$.
Indeed, if $d_2\le 3$, then $\varphi_1$ and $\varphi_2^{-1}$ both have degree $2$ or $3$ and thus have only one proper base point (\cref{Prop:BasePtsJonq}), namely $q_1$ and $p_2$ respectively.
As $q_1\not=p_2$, the two maps have no common base point and thus $\deg(\varphi_3)=\deg(\varphi_2\varphi_1)=\deg(\varphi_2)\deg(\varphi_1)\ge 4$, impossible.

Let us write $\Omega=\phi_2(\Lambda_{q_1})$ and $r=\deg(\Omega)$.
By \cref{PropDelta}, we have $r=d_2-m$, where $m\in \{0,1\}$ is the multiplicity of $q_1$ as a base point of $\phi_2$, so $r\ge 3$.

As $p_2\not=q_1$, the two pencils $\Lambda_{p_2}$ and $\Lambda_{q_1}$  give a birational map $\P^2\dasharrow \p^1\times \p^1$. This implies that $\Lambda_{q_2}=\phi_2(\Lambda_{p_2})$ and $\Omega=\phi_2(\Lambda_{q_1})$ also define a birational map $\P^2\dasharrow \p^1\times \p^1$. As $\deg( \Omega)\ge 3$, the multiplicity of $\Omega$ at $q_2$ is $r-1$.

We now use $\Omega=\phi_2(\Lambda_{q_1})=\phi_2\phi_1(\Lambda_{p_1})=\phi_3^{-1}(\Lambda_{p_1})$ and apply the same argument to $\phi_3^{-1}$: As $\Lambda_{p_1}$ and $\Lambda_{q_3}$ define a birational map to $\p^1\times\p^1$, so do $\Omega=\phi_3^{-1}(\Lambda_{p_1})$ and $\Lambda_{p_3}=\phi_3^{-1}(\Lambda_{q_3})$. Hence, the multiplicity of $\Omega$ at $p_3$ is $r-1$.

We obtain two proper points $q_2\not=p_3$ which have both multiplicity $r-1$, so by Bézout's theorem $2(r-1)\le r$, which contradicts $r\ge 3$. 
\end{proof}

Let $G$ be a group, and $G_1, G_2 \subseteq G$ two subgroups such that $\langle G_1, G_2 \rangle = G$.
Consider the abstract free product $G_1 * G_2$, and denote by $i_1\colon G_1 \to G_1 * G_2$, $i_2\colon G_2 \to G_1 * G_2$ the two natural injections.
We denote by $N$ the smallest normal subgroup in $G_1*G_2$ containing all elements of the form $i_1(h)i_2(h)^{-1}$, with $h \in G_1 \cap G_2$.
There is a natural surjective homomorphism $\phi\colon (G_1*G_2)/N \to G$.
We say that $G$ is the \emph{amalgamated product} of its subgroups $G_1$ and $G_2$ along their intersection, denoted by $G = G_1 *_{G_1\cap G_2} G_2$, if $\phi$ is an isomorphism.
Informally, this means that all relations in $G = \langle G_1, G_2
\rangle$ come from the relations inside $G_1$ or $G_2$.
The amalgamated product satisfies the following universal property.
If $H$ is a group and $\phi_1\colon G_1 \to H$, $\phi_2\colon G_2 \to H$ are two homomorphisms that coincide on $G_1 \cap G_2$, then there is a unique homomorphism $\phi\colon G_1 *_{G_1 \cap G_2} G_2 \to H$ that extends $\phi_1$ and $\phi_2$.

\begin{lemma}
\label{l:amalgam_free}
Let $G = G_1 *_{G_1 \cap G_2} G_2$ be an amalgamated product.
Let
\[
X = \{ g_1 g_2 g_1' \in G \mid g_1, g_1' \in G_1, g_2 \in G_2 \}.
\]
Let $F(X)$ be the free group over $X$, and let $R \subseteq F(X)$ be the set of words of the form $xyz$, with $x,y,z \in X$, and $xyz = 1$ in $G$.
Then $G = \lb X \mid R \rb$.
\end{lemma}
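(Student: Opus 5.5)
The plan is to compare $\lb X\mid R\rb$ with the amalgamated product via the universal property, constructing mutually inverse homomorphisms. There is an obvious surjection $\pi\colon \lb X\mid R\rb\to G$, sending each generator $x\in X$ to the corresponding element of $G$. It is well defined because every relation in $R$ holds in $G$, and it is surjective because $G_1\cup G_2\subseteq X$ generates $G$. The work is to construct an inverse $\Psi\colon G\to \lb X\mid R\rb$. We build it from homomorphisms $\Psi_i\colon G_i\to \lb X\mid R\rb$, $g\mapsto [g]$, where $[g]$ denotes the generator corresponding to $g\in G_i\subseteq X$.

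First we check that each $\Psi_i$ is a homomorphism. For $g,h\in G_i$, the elements $g$, $h$ and $(gh)^{-1}$ all lie in $G_i\subseteq X$, and $(gh)^{-1}\cdot g\cdot h=1$ in $G$. So the word $[(gh)^{-1}][g][h]$ lies in $R$, which gives $[g][h]=[gh]$. The cases with $1$ and inverses are similar: $[1]=1$ follows from the length-$3$ relation $[1][1][1]$. The maps $\Psi_1$ and $\Psi_2$ agree on $G_1\cap G_2$, since there they send an element to the same generator. By the universal property of the amalgam, we obtain $\Psi\colon G\to\lb X\mid R\rb$ extending both.

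Next we show $\Psi\circ\pi=\id$ on generators. Take $x=g_1g_2g_1'\in X$. Then $\Psi(\pi(x))=[g_1][g_2][g_1']$, and we need this to equal $[x]$. We apply length-$3$ relations twice. Both $y=g_1g_2$ and $x$ lie in $X$ (take $g_1'=1$ for $y$). Hence $[x^{-1}][y][g_1']$ is a word in $R$, since $x^{-1}=g_1'^{-1}g_2^{-1}g_1^{-1}\in X$; this gives $[y][g_1']=[x^{-1}]^{-1}$. We also need $[x^{-1}]=[x]^{-1}$, which follows from the relation $[x][x^{-1}][1]$ together with $[1]=1$. Similarly $[y^{-1}][g_1][g_2]\in R$ gives $[g_1][g_2]=[y]$. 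Combining these, $[g_1][g_2][g_1']=[y][g_1']=[x]$. So $\Psi\circ\pi$ fixes every generator and is therefore the identity. Hence $\pi$ is injective, and so an isomorphism.

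The main (mild) obstacle is bookkeeping. One must verify that every auxiliary element used ($y$, $x^{-1}$, $1$, and products inside $G_i$) genuinely lies in $X$, so that the needed words belong to $R$. One must also verify that $\Psi_i$ respects inverses and the identity. Each of these is a single length-$3$ relation, so no deeper input from the structure of amalgams (such as normal forms) is needed.
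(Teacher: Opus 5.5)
There is a genuine error in the proposal, although its overall architecture matches the paper's. Both use the surjection $\lb X\mid R\rb\to G$, a map $G\to\lb X\mid R\rb$ from the universal property of the amalgam, and the check $[g_1][g_2][g_1']=[g_1g_2g_1']$ by two length-$3$ relations (the paper groups $g_2g_1'$ first, you group $g_1g_2$ first).

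The error is the step ``$[1]=1$ follows from the length-$3$ relation $[1][1][1]$''. That relation only gives $[1]^3=1$.

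This is not a harmless slip. With $R$ read as you read it, as positive words $xyz$ only, $[1]$ really is nontrivial. The homomorphism $F(X)\to\Z/3\Z$ sending every generator $[x]$ to $1$ kills every word $xyz$, so it factors through $\lb X\mid R\rb$ and sends $[1]$ to a generator. The consequences are:
\begin{itemize}
\item From $[g][h][(gh)^{-1}]\in R$ and $[gh][(gh)^{-1}][1]\in R$ you only get $[g][h]=[1]\,[gh]$ for $g,h\in G_i$, so $\Psi_i$ is not a homomorphism.
\item The identity $[x^{-1}]=[x]^{-1}$ fails; you only get $[x][x^{-1}]=[1]^{-1}$.
\item The natural map $\lb X\mid R\rb\to G$ is not injective, since $[1]\mapsto 1$. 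Under that reading the statement itself is false.
\end{itemize}

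The missing point is how $R$ must be understood. It has to contain the length-$3$ words with inverse letters, such as $[g][h][gh]^{-1}$. Equivalently, one can take all relations of length $\le 3$ as in \cref{t:main_bis}, which include the length-one relation $[1]$ and the length-two relations $[x][x^{-1}]$. This is what the paper's proof tacitly uses: it invokes the multiplication-table presentation (``any group is the quotient of the free group over all its elements modulo relations of length $3$''), and it treats $[g_2][g_1']=[g_2g_1']$ as a length-$3$ relation.

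With that reading, $[1][1][1]^{-1}\in R$ gives $[1]=1$, and $[g][h][gh]^{-1}\in R$ gives the homomorphism property of $\Psi_i$ directly. The rest of your argument then goes through unchanged and is essentially the paper's proof.
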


\begin{proof}
Since $G_1 \cup G_2 \subseteq X$, and any group  is the quotient of the free group over all its elements modulo relations of length 3, there are natural homomorphisms $\phi_i\colon G_i \to F(X)/\lbb R \rbb$, which coincide on $G_1 \cap G_2$.
By the universal property of the amalgamated product described above, these homomorphisms admit a common extension $\phi\colon G \to F(X)/\lbb R \rbb$.

We claim that $\phi$ is surjective.
To distinguish elements from $G$ and $F(X)$, we denote by $[x]$ the word in $F(X)$ corresponding to an element $x\in X\subseteq G$. We prove that $\varphi(x)=[x]$ for each $x\in X$.
There exist $g_1,g_1'\in G_1$, $g_2\in G_2$ such that $x=g_1g_2g_1'$. As
we have the two relations of length three
\(
 [g_2] [g_1'] = [g_2 g_1']
\)
and
\( [g_1] [g_2 g_1 '] = [g_1 g_2 g_1 ']\),
we obtain
\[
 \phi(x)=\phi(g_1 g_2 g_1') = [g_1] [g_2] [g_1 '] = [g_1] [g_2 g_1 '] = [g_1 g_2 g_1 ']=  [x] .
\]
Thus $X \subseteq \phi (G)$, and therefore $\phi$ is surjective, as claimed.

Conversely, since $X \subseteq G$, there is a natural homomorphism from the free group $F(X)$ to $G$, and each element in $R$ is in the kernel.
So we get a homomorphism $\psi\colon F(X)/\lbb R \rbb \to G$,
which is surjective since $G_1\cup G_2\subseteq X$ generate $G$.
As $\psi(\phi(t))=\psi([t])=t$ for each $t\in G_1\cup G_2$, we obtain $\psi\circ \phi=\mathrm{id}_G$. This implies that $\phi$ is injective and thus bijective.
\end{proof}

\begin{lemma}
\label{lemm:Jon3LAB}
There is a natural isomorphism
\[
\lb \Jonq(3L) \mid R_3 \rb \iso A *_{A \cap B} B,
\]
where $R_3 \subseteq F(\Jonq(3L))$ is the set of all words of the form $\phi_3\phi_2\phi_1$, where $\phi_1,\phi_2,\phi_3\in \Jonq(3L)$ and $\phi_3\phi_2\phi_1=\id$ in $\Bir(\P^2, 3L)$.
\end{lemma}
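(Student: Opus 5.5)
We construct mutually inverse homomorphisms between $\lb \Jonq(3L) \mid R_3 \rb$ and $G:=A *_{A \cap B} B$, modelled on the proof of \cref{l:amalgam_free}. The difference is that here we do not yet know that the abstract amalgam maps isomorphically onto a subgroup of $\Bir(\P^2,3L)$. The plan is therefore to use \cref{l:jonq_relation} to reduce every relation of length $3$ to a relation inside $B$.

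\emph{From the amalgam to the presentation.} Since $A\cup B\subseteq \Jonq(3L)$, we have natural maps $A\to \lb \Jonq(3L)\mid R_3\rb$ and $B\to \lb \Jonq(3L)\mid R_3\rb$ sending $g$ to the class $[g]$. These are homomorphisms: for $g,h$ in $A$ (or in $B$), the word $[gh]^{-1}[g][h]$ is a product of three elements of $\Jonq(3L)$ that equals $\id$, hence lies in $R_3$. The two maps agree on $A\cap B$, so the universal property gives $\Phi\colon G\to \lb \Jonq(3L)\mid R_3\rb$.

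\emph{From the presentation to the amalgam.} For $\phi\in\Jonq(3L)$, write $\phi=a b a'$ with $a,a'\in A$ and $b\in B$, and set $\Psi(\phi)=a\cdot b\cdot a'\in G$. We first check that this is well defined. If $aba'=\tilde a\tilde b\tilde a'$, then $\tilde b=(\tilde a^{-1}a)\,b\,(a'\tilde a'^{-1})$. When $b\notin A$, the map $b$ has maximal base point $\pcoor{1:0:0}$ and sends the pencil through it to itself. Hence $\tilde a^{-1}a$ and $a'\tilde a'^{-1}$ fix $\pcoor{1:0:0}$, so they lie in $A\cap B$, and the equality then holds in $G$. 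When $b\in A$, everything lies in $A$ and the equality is trivial. (Degree $2$ needs care, since there can be several maximal points; we either fix the choice via the pencil $\phi$ is viewed as preserving, or check directly that the ambiguity is absorbed, since the various decompositions differ by elements of $A\cap B$ or by a relation inside $A$.) This gives a homomorphism from the free group $F(\Jonq(3L))$ to $G$.

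We must show that $\Psi$ kills $R_3$, and this is the main step. Let $\phi_3\phi_2\phi_1=\id$. By \cref{l:jonq_relation} there are $a_1,a_2,a_3\in A$ with $\phi_i=a_{i+1}b_ia_i^{-1}$ (indices mod $3$) and $b_i\in B$, and $b_3b_2b_1=\id$ in $\Bir(\P^2)$, hence in $B$ and so in $G$. Using these decompositions, the product $\Psi(\phi_3)\Psi(\phi_2)\Psi(\phi_1)$ telescopes in $G$ to $a_1b_3b_2b_1a_1^{-1}=1$. This relies on the well-definedness above to allow the specific decompositions $\phi_i=a_{i+1}b_ia_i^{-1}$.

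Finally, $\Psi\circ\Phi$ is the identity on $A\cup B$, which generates $G$. In the other direction, $\Phi\Psi([\phi])=[a][b][a']=[aba']=[\phi]$ by two relations of length $3$, exactly as in \cref{l:amalgam_free}. Hence $\Phi$ and $\Psi$ are inverse isomorphisms. The expected obstacle is the well-definedness of $\Psi$, especially for quadratic maps with several maximal points; the telescoping step itself is routine once that is settled.
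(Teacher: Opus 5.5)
Your proposal is correct and follows essentially the paper's route. In both, the key step is \cref{l:jonq_relation}, which rewrites a relation $\phi_3\phi_2\phi_1=\id$ as $(a_1b_3a_3^{-1})(a_3b_2a_2^{-1})(a_2b_1a_1^{-1})$ with $b_3b_2b_1=\id$ in $B$, and the rest is the bookkeeping of \cref{l:amalgam_free}, which you redo inline; you also make explicit that $a\cdot b\cdot a'\in A*_{A\cap B}B$ does not depend on the decomposition $\phi=aba'$, which the paper uses tacitly.

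Your degree-$2$ worry is void: by \cref{Prop:BasePtsJonq}, a quadratic element of $\Jonq(3L)$ has a unique proper base point (Cases~\ref{BasePtsJonq2} and~\ref{BasePtsJonq3} there require degree $\ge 4$), so your argument for $b\notin A$ covers all degrees. Also, $[gh]^{-1}[g][h]$ is not a product of three generators, so to conclude $[g][h]=[gh]$ you need, as the paper tacitly does, relations of length $\le 3$ such as $[g^{-1}][g]=1$.
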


\begin{proof}
By \cref{l:amalgam_free}, writing each element of $\Jonq(3L)$ as $aba'$, with $a,a'\in A$ and $b\in B$, we obtain an isomorphism
$
\psi\colon \lb \Jonq(3L) \mid R_3' \rb\to A *_{A \cap B} B,
$
where $R_3'\subseteq F(\Jonq(3L))$ consists of all
words of the form $xyz$ with $x,y,z\in \Jonq(3L)$ and where $\psi(x)\psi(y)\psi(z)=1$ in $A *_{A \cap B} B$.

We now consider $\phi_1,\phi_2,\phi_3\in \Jonq(3L)$ such that $\phi_3 \phi_2 \phi_1=\id$ in $\Bir(\P^2, 3L)$, and we want to prove that $\psi(\phi_3)\psi(\phi_2)\psi(\phi_1)=1$ in $A *_{A \cap B} B$.
By~\cref{l:jonq_relation}, there exist $a_1, a_2, a_3 \in A$ such that
\[
\id =
(\underbrace{a_1^{-1} \phi_3 a_3}_{b_3\in B})
(\underbrace{a_3^{-1} \phi_2 a_2}_{b_2\in B})
(\underbrace{a_2^{-1} \phi_1 a_1}_{b_1\in B}).
\]
By definition of $\psi$, we obtain $\psi(\phi_3)=a_1b_3a_3^{-1}$, $\psi(\phi_2)=a_3b_2a_2^{-1}$ and $\psi(\phi_1)=a_2b_1a_1^{-1}$.
As $b_3b_2b_1 = 1$ in $B$, the relation $\psi(\phi_3)\psi(\phi_2)\psi(\phi_1)=(a_1b_3a_3^{-1})(a_3b_2a_2^{-1})(a_2b_1a_1^{-1})=1$ holds in the amalgamated product $A *_{A \cap B} B$, as desired.
\end{proof}

\begin{proof}[{Proof of \cref{t:main}}]
The isomorphism from \cref{lemm:Jon3LAB} sends 
$
(-xy^2+y, \tfrac1y ) \circ (x, -y+ \tfrac1x)
$
to the composition
$
\bigl(-xy^2+y, \tfrac1y \bigr) \circ (y,x) \circ \bigl(-x+ \tfrac1y,y \bigr) \circ (y,x),
$
so we get an  isomorphism 
\[
\Bir(\P^2, 3L)\stackrel{\text{Theorem~\ref{t:main_bis}}}{=}\langle \Jonq(3L) \mid R \rangle \iso
(A *_{A \cap B} B) / N,
\]
where $N$ is the normal closure of $( \bigl(-xy^2+y, \tfrac1y \bigr) \circ (y,x) \circ \bigl(-x+ \tfrac1y,y \bigr) \circ (y,x) )^2.$
\end{proof}

\bibliographystyle{myalpha2024}
\bibliography{biblio_main}

\end{document}